\newcommand{\var}{\alpha}
\definecolor{mycolor1}{rgb}{0.00000,0.44700,0.74100}
\definecolor{mycolor2}{rgb}{0.8500, 0.3250, 0.0980}
\definecolor{mycolor3}{rgb}{0.9290, 0.6940, 0.1250}
\definecolor{mycolor4}{rgb}{0.4940, 0.1840, 0.5560}
\definecolor{mycolor5}{rgb}{0.4660, 0.6740, 0.1880}
\newcommand{\C}{\mathbb{C}}
\newcommand{\Z}{\mathbb{Z}}
\newtheorem{theorem}{Theorem}[section]
\newtheorem{lemma}[theorem]{Lemma}
\newtheorem{corollary}[theorem]{Corollary}
\newtheorem{proposition}[theorem]{Proposition}
\theoremstyle{definition}
\newenvironment{example}
{\pushQED{\qed}\examplex}
{\popQED\endexamplex}
\newenvironment{remark}
{\pushQED{\qed}\remarkx}
{\popQED\endremarkx}
\numberwithin{equation}{section}
\newtheoremstyle{citing}
{}
{}
{\itshape}
{}
{\bfseries}
{\textbf{.}}
{.5em}
{\thmnote{#3}}
{\theoremstyle{citing}
}
\DeclareMathOperator{\CC}{\mathbb{C}}
\DeclareMathOperator{\RR}{\mathbb{R}}
\DeclareMathOperator{\ZZ}{\mathbb{Z}}
\DeclareMathOperator{\Conv}{Conv}
\DeclareMathOperator{\vol}{vol}
\renewcommand{\d}{\mathrm{d}}
\newcommand{\n}{\mathsf{n}}
\newcommand{\eps}{\varepsilon}
\newcommand{\uu}{u}
\definecolor{mycolor1}{rgb}{0.0,0.53,0.74}
\definecolor{mycolor2}{rgb}{0.9,0.17,0.31}
\definecolor{mycolor3}{rgb}{1.0, 0.8, 0.0}
\definecolor{mycolor4}{rgb}{0.73, 0.2, 0.52}
\definecolor{mycolor5}{rgb}{0.0, 0.66, 0.47}
\definecolor{mycolor6}{rgb}{1.0, 0.6, 0.4}
\def\treeBis{\tikz[baseline=.1ex]{
\fill (0,0.2) circle (2pt) coordinate (A);
\draw (0,0) node [anchor=north][inner sep=0.75pt]  [font=\scriptsize]  {$X_{2} \!+\!Y_{12}$};
}
}
\def\treeOne{\tikz[baseline=.1ex]{
\fill (0,0.2) circle (2pt) coordinate (A);
\draw (0,0) node [anchor=north][inner sep=0.75pt]  [font=\scriptsize]  {$X_{1} \!+\!Y_{12}$};
}
}
\def\treeTwo{\tikz[baseline=.1ex]{
\fill (0,0.2) circle (2pt) coordinate (A);
\fill (1,0.2) circle (2pt) coordinate (B);
\draw (0,0) node [anchor=north][inner sep=0.75pt]  [font=\scriptsize]  {$X_{2} \!+\!Y_{12}$};
\draw (1,0) node [anchor=north][inner sep=0.75pt]  [font=\scriptsize]  {$X_{3}$};
\draw (0.5,0.3) node [anchor=south][inner sep=0.75pt]  [font=\scriptsize]  {$Y_{23}$};
\draw (A)--(B);
}
}
\def\treeThree{\tikz[baseline=.1ex]{
\fill (0,0.2) circle (2pt) coordinate (A);
\fill (1,0.2) circle (2pt) coordinate (B);
\draw (0,0) node [anchor=north][inner sep=0.75pt]  [font=\scriptsize]  {$X_{1}$};
\draw (1,0) node [anchor=north][inner sep=0.75pt]  [font=\scriptsize]  {$X_{2}\!+\!Y_{23}$};
\draw (0.5,0.3) node [anchor=south][inner sep=0.75pt]  [font=\scriptsize]  {$Y_{12}$};
\draw (A)--(B);
}
}
\def\treeFour{\tikz[baseline=.1ex]{
\fill (0,0.2) circle (2pt) coordinate (A);
\draw (0,0) node [anchor=north][inner sep=0.75pt]  [font=\scriptsize]  {$X_{3} \!+\!Y_{23}$};
}
}
\def\treeFive{\tikz[baseline=.1ex]{
\fill (0,0.2) circle (2pt) coordinate (A);
\draw (0,0) node [anchor=north][inner sep=0.75pt]  [font=\scriptsize]  {$X_{2} \!+\!Y_{12}\!+\!Y_{23}$};
}
}
\def\treeSix{\tikz[baseline=.1ex]{
\fill (0,0.2) circle (2pt) coordinate (A);
\draw (0,0) node [anchor=north][inner sep=0.75pt]  [font=\scriptsize]  {$X_{3} \!+\!Y_{23}$};
}
}
\def\treeSeven{\tikz[baseline=.1ex]{
\fill (0,0.2) circle (2pt) coordinate (A);
\draw (0,0) node [anchor=north][inner sep=0.75pt]  [font=\scriptsize]  {$X_{1} \!+\!Y_{12}$};
}
}
\def\treeEight{\tikz[baseline=.1ex]{
\fill (0,0.2) circle (2pt) coordinate (A);
\draw (0,0) node [anchor=north][inner sep=0.75pt]  [font=\scriptsize]  {$X_{2} \!+\!Y_{12}\!+\!Y_{23}$};
}
}
\pgfplotsset{compat=1.18}
\title{Euler Discriminant of Complements of Hyperplanes}
\author{Claudia Fevola and Saiei-Jaeyeong Matsubara-Heo}
\date{}
\begin{document}
\maketitle 

\begin{abstract}
The Euler discriminant of a family of very affine varieties is defined as the locus where the Euler characteristic drops. In this work, we study the Euler discriminant of families of complements of hyperplanes. We prove that the Euler discriminant is a hypersurface in the space of coefficients, and provide its defining equation in two cases: (1) when the coefficients are generic, and (2) when they are constrained to a proper subspace. In the generic case, we show that the multiplicities of the components can be recovered combinatorially. This analysis also recovers the singularities of an Euler integral. In the appendix, we discuss a relation to cosmological correlators.
\end{abstract}

\section{Introduction}
The study of Euler integrals has a long tradition dating back to works by Aomoto and Gelfand, among others. For an extensive overview, we refer the readers to  \cite{aomoto2011theory, aomoto1997twisted} and references therein. In this work, we are mainly interested in families of Euler integrals of the form 
\begin{equation}\label{eq:integral_intro}
I_{\Gamma}(z)\, =\,\, \int_\Gamma \,  h_{k+1}(\alpha;z)^{s_1}\,\cdots \,h_n(\alpha;z)^{s_n} \, \alpha_1^{\nu_1}\,\cdots\, \alpha_k^{\nu_k} \,\frac{\mathrm{d} \alpha_1}{\alpha_1} \wedge \cdots \wedge \frac{\mathrm{d} \alpha_k}{\alpha_k},
\end{equation}
where $\alpha=(\alpha_1,\dots,\alpha_k)$ are coordinates on $(\mathbb{C}^*)^k$ and $h_{j}(\alpha;z) = \,z_{0j} \,+\, z_{1j}\alpha_1\,+\,\dots\,+ \,z_{kj}\alpha_k$ for $j=k+1,\dots,n$ are linear forms in the variables $\alpha$ and coefficients $z_{ij}\in \mathbb{C}$. The exponents $s_j,\nu_i$ take on complex values, making the integrand multivalued. Twisted de Rham (co)-homology provides a rigorous framework for defining Euler integrals. More precisely, the integration contour $\Gamma$ is a twisted $k$-cycle on the complement of hyperplanes
\begin{equation}\label{eq:complement_intro}
    X_z\, = \, (\CC^*)^k\setminus\left( \{h_{k+1}(\alpha;z)=0\}\cup \cdots \cup \{h_n(\alpha;z)=0\}\right),
\end{equation}
for fixed coefficients $z_{ij}\in \CC$ in each linear form. The term \textit{twisted} essentially means that $\Gamma$ also records the choice of which branch of the integrand to integrate. We refer to \cite[Section~3]{matsubara2023four} for an introduction to twisted cycles and co-cycles. When the coefficients are generic, the signed Euler characteristic $(-1)^k\cdot \chi(X_z)$ of the hyperplanes complement counts the local solutions of an \textit{$A$-hypergeometric system} and determines the dimension of the top-dimensional twisted de Rham cohomology group. For a discussion of these equalities in a broader framework, we refer to \cite[Theorem~1.1]{agostini2022vector}.  Furthermore, for $X_z$ as in \eqref{eq:complement_intro}, the signed Euler characteristic also coincides with the \textit{beta invariant} of $X_z$, and with the number of bounded regions if the arrangement is real. The latter is known as Varchenko’s conjecture \cite{varchenko1995critical}, proved by Orlik–Terao \cite{orlik1995number}. The value of $(-1)^k\cdot \chi(X_z)$ for arrangements in general position was computed in \cite{aomoto1997twisted} via an explicit computation of a twisted cohomology basis of logarithmic forms.

In \cite{fevola2024landau, fevola2024principal}, it is proposed to explore the variation of the Euler characteristic to determine the singularities of Feynman integrals, expressed in the form of Euler integrals with coefficients varying in a subspace of the space of physical parameters. 
This approach led to the re-discovery of the \textit{Euler discriminant}, which was originally introduced by Esterov \cite{esterov2013discriminant} in the context of studying singular solutions to polynomial systems. Here, we present a version of this definition adapted to our framework, with the general definition provided in Section~\ref{sec:2_EulerDiscriminant}. Let $z$ denote the matrix of size $(k+1)\times(n-k)$ whose columns are the coefficients of the hyperplanes $h_j(\alpha;z)$. We denote $Z\subset \CC^{(k+1)(n-k)}$ the subspace where these coefficients vary. The Euler discriminant $\nabla_{\chi}(Z)$ is the locus of points $z\in Z$ for which the Euler characteristic $(-1)^k\cdot \chi(X_z)$ is smaller than its generic value. 
When $Z$ is the entire coefficients space, the Euler discriminant coincides with the zero locus of the \textit{principal $A$-determinant}. This is a polynomial in the variables $z_{ij}$ from the theory of $A$-hypergeometric systems \cite{GKZbook}. We refer to Section~\ref{sec:PAD} for its definition.
In this article, we investigate the Euler discriminant associated to the complement of hyperplanes $X_z$ for two distinct choices of coefficients spaces:
\begin{enumerate}
    \item[$(1).$] $Z$ is $ \mathbb{C}^{(k+1)\times(n-k)}\cap \{z_{ij}=0 \,\,\text{for some } \, i=0,\dots,k,\, j=k+1,\dots,n\} $: in this case, the Euler discriminant coincides with the vanishing locus of a principal $A$-determinant. Theorem~\ref{thm:PAD} describes the defining polynomial, expressed as a product of $A$-discriminants corresponding to various faces of a polytope. Importantly, we address the combinatorial problem of detecting the faces of the polytope that do not contribute to a discriminant. We characterize them using a condition on an associated bipartite graph.
    \item[$(2).$] $Z$ is a general subvariety of $ \mathbb{C}^{(k+1)\times(n-k)}$: in applications, coefficients are often constrained to a subspace of the full coefficients space. This setup is more general than the previous case in the sense that the principal $A$-determinant is not defined for general subvarieties. Using the Orlik-Solomon algebra associated to the family $X_z$,  Theorem~\ref{thm:Euler_discr} describes the defining polynomial of the Euler discriminant once again through a determinantal formula.
\end{enumerate}

\noindent
As for Theorem \ref{thm:PAD}, we note that \cite[Proposition 4.4]{galashin2024trianguloids} determines the normal fan of the principal $A$-determinant, whereas Theorem \ref{thm:PAD} further provides an explicit description of its Newton polytope.
In both cases (1) and (2) the matroid associated to the hyperplane arrangement is connected to the Euler discriminant. A clear consequence of Theorems~\ref{thm:PAD} and~\ref{thm:Euler_discr} is that the Euler discriminant corresponds to locus where the matroid changes. This result aligns with \cite[Theorem~1.3]{clarke2023matroid}, where, motivated by the study of maximum likelihood (ML) degree in algebraic statistics, it was shown that the signed Euler characteristic is a matroid invariant. The paper also studies the resulting matroid stratification in several small-dimensional cases. An interesting problem would be to relate this stratification to the Euler stratification, defined in \cite{wiesmann2024euler}.

A key question raised in \cite{fevola2024principal} concerns the relationship between the Euler discriminant and the singular locus of the $D$-module annihilating the Feynman integral. This question is particularly relevant in scattering amplitudes, where studying the differential equations that annihilate Feynman integrals has been a deeply explored method to be able to evaluate these integrals. In Section~\ref{sec:SingularLocus}, we formalize this question in the case of Euler integrals by defining an appropriate $D$-module. Theorem~\ref{thm:Euler discriminant and singular locus} showes that the singular locus of such a $D$-module and the Euler discriminant coincide in their codimension one part. Furthermore, these loci are purely one-codimensional in the case of complements of hyperplanes, yielding the expected equality in this case, as stated in Theorem~\ref{thm:5.1}.

Notably, examples of Euler integrals have appeared in theoretical cosmology \cite{arkani2023, baumann2024kinematic}, where their integrands often takes the form of products of linear forms as in \ref{eq:integral_intro}. The Appendix to this work offers a self-contained introduction to cosmological integrals as mathematical objects. We will use these integrals as concrete examples of Euler integrals of linear forms with coefficients parametrized by physical parameters and compute their Euler discriminant. 
\textbf{Outline.} The structure of the paper is as follows. In Section~\ref{sec:2_EulerDiscriminant}, we introduce the general notions of Euler discriminants and the principal $A$-determinant for Euler integrals. We explore their relationship with the Cayley configuration (Theorem~\ref{thm:Cayley}). Section~\ref{sec:PAD_sparse} focuses on the principal $A$-determinant for the choice of $Z$ as in $(1)$. We recall the definition of edge polytope and associated bipartite graph, which are central to proving Theorem~\ref{thm:PAD}.  An implementation to compute the principal $A$-determinant from Theorem~\ref{thm:PAD} is available at
\begin{equation}\label{mathrepo}
\href{https://mathrepo.mis.mpg.de/EulerDiscriminantHyperplanes}{\texttt{https://mathrepo.mis.mpg.de/EulerDiscriminantHyperplanes}}
\end{equation} 
Subsection~\ref{sec:subdiagram_volume} presents a method for computing the multiplicities of the components. Section~\ref{sec:Euler_discriminant} examines the Euler discriminant for coefficients as in $(2)$. In Section~\ref{sec:SingularLocus}, we relate the Euler discriminant to the singular locus of a $D$-module, proving in Theorem 5.1 that these loci coincide in codimension one. Finally, the Appendix provides an introduction to cosmological integrals, illustrating their connection to Euler integrals of linear forms. This includes computational examples of Euler discriminants for cosmological integrals.

\section{Euler discriminant and principal A-determinant}

\subsection{Euler discriminant}\label{sec:2_EulerDiscriminant}
Let $f_0,\dots,f_\ell$ be Laurent polynomials, where each $f_i$ is given as $$f_i(\var;z)\,=\,\sum_{\uu\in A_i}z_{\uu,i}\var^\uu,$$ with variables $\var\in (\C^*)^n$ and fixed monomial support represented by a set $A_i\subset \ZZ^{n}$. The coefficients $z_i=(z_{\uu,i})_{\uu\in A_i}$ take values in $\CC^{A_i} \coloneqq \CC^{|A_i|}$, and  
we assume that $\bigcup_{i=0}^\ell A_i$ spans the ambient lattice $\Z^n$. Once coefficients $z_i\in \C^{A_i}$ are fixed, the Laurent polynomials $f_i(\var;z)$ define a hypersurface in the algebraic torus $(\C^*)^n$:
\begin{equation}
    V_{A,z}\, = \, V_{(\CC^*)^n}\biggl(\prod_{i=0}^\ell f_i(\,\,\cdot\,\,;z)\biggr) \,=\, \left\{\var \in (\CC^*)^n \,\, |\, \, \prod_{i=0}^\ell f_i(\var;z) = 0 \right\}
\end{equation}
\noindent
Here, $f(\,\cdot\,;z)$ stands for the Laurent polynomial $f(\var;z)$ viewed as a regular function on $\var\in(\CC^*)^n$.
Consider $ Z\subset \CC^A$, a smooth subvariety of the full parameter space $\CC^A \coloneqq \CC^{A_0}\times\cdots\times\CC^{A_\ell}$.
For each $z\in  Z$, we set
\begin{equation}\label{eq:Xz}
    X_z\,:=\,(\CC^*)^n\setminus V_{A,z}.
\end{equation}
\noindent
We denote $\chi_z$ the signed Euler characteristic $(-1)^n\cdot\chi\left( X_z\right)$, and $\chi^*$  the maximal value ${\rm max}\{\chi_z\mid z\in  Z\}$.
The \textit{Euler discriminant} $\nabla_{\chi}( Z)$ is  the locus
\begin{equation}\label{eq:2.3}
    \nabla_{\chi}( Z)\,:=\,\{ z\in Z\mid \chi_z<\chi^*\}.
\end{equation}
If follows from \cite[Theorem 3.1]{fevola2024principal}  that $\nabla_{\chi}(Z)$ is a closed subvariety of $Z$. 

\begin{remark}
Esterov first defined the Euler discriminant in \cite{esterov2013discriminant}, and it was later applied to Feynman integrals in \cite{fevola2024principal}. Esterov's definition, in particular, also takes into account the multiplicities of each component: the Euler discriminant in \cite[Definition~3.1]{esterov2013discriminant} is the divisor given by the formal sum of the closure of the codimension-one strata $P_i$ in $Z$ where the Euler characteristic $\chi_z$ for $z\in P_i$ is equal to a fixed value $\chi_i$. The multiplicity of each component is defined as the drop in Euler characteristic $\chi^*-\chi_i$. This definition is motivated by the notion of multiplicity, which was established for the generic setting where the subvariety $Z$ is the entire parameter space, c.f. Theorem~\ref{thm:drop_Euler}.  
\end{remark}

The \textit{Cayley configuration} of $\{A_i\}_{0\leq i \leq \ell}$ is represented by the columns of the $(n+\ell)\times \sum_{i=0}^\ell |A_i|$ matrix given by
\begin{equation}\label{eq:cayley}
\left( \begin{array}{ccc|ccc|c|ccc}
0 & \cdots & 0 & 1 & \cdots & 1 &  & 0 & \cdots & 0 \\
& \vdots & & & \vdots & & \cdots& & \vdots & \\
0 & \cdots & 0 & 0 & \cdots & 0 & & 1 & \cdots & 1\\
&A_0& & &A_1& &  & &A_\ell
\end{array} \right).
\end{equation}
The Cayley configuration also has a natural interpretation as the monomial support of a multivariate Laurent polynomial in the variables $\beta = (\beta_1,\dots,\beta_\ell)$ and  $\var$ as above, given by
\begin{equation}\label{eq:Cayley_poly}
    f (\var,\beta) \,\coloneqq\, f_0\, + \,\sum_{i=1}^\ell \beta_i f_i(\alpha). 
\end{equation}
The following result relates the Euler characteristic of the variety defined by the Laurent polynomials $f_0,\dots,f_\ell$ and the one given by $f$.
\begin{theorem}\label{thm:Cayley}
Let $f_0,\dots,f_\ell\in\mathbb{C}[\var_1^{\pm 1},\dots,\var_n^{\pm 1}]$ and set $T:={\rm Spec}\,\mathbb{C}[\beta_1^{\pm 1},\dots,\beta_\ell^{\pm 1}]$.
Then
\begin{equation*}
    \chi\left( V_{(\CC^*)^n}(f_1\cdots f_n) \right)
    \,=\,
    (-1)^\ell\cdot \chi\left( V_{(\mathbb{C}^*)^n\times T}\left(f\right)\right).
\end{equation*}
\end{theorem}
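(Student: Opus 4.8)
The plan is to study the projection $\pi\colon V_{(\mathbb{C}^*)^n\times T}(f)\to(\mathbb{C}^*)^n$ that forgets the $\beta$-coordinates, and to compute Euler characteristics fiber-by-fiber, using only additivity of $\chi$ over locally closed decompositions, its multiplicativity over products, and the vanishing $\chi\bigl((\mathbb{C}^*)^m\bigr)=0$ for $m\geq 1$. (Throughout, $\chi$ denotes the topological Euler characteristic, which for complex algebraic varieties coincides with the compactly supported one and is therefore additive and multiplicative; the case $\ell=0$ is vacuous, so assume $\ell\geq 1$. On the left-hand side we read $V_{(\mathbb{C}^*)^n}(f_0\cdots f_\ell)$, the union of all the hypersurfaces $\{f_i=0\}$.)

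First I would stratify the base. For $S\subseteq\{1,\dots,\ell\}$ set $U_S:=\{\alpha\in(\mathbb{C}^*)^n : f_i(\alpha)=0\Leftrightarrow i\in S\}$; these locally closed sets partition $(\mathbb{C}^*)^n$, so $\chi\bigl(V_{(\mathbb{C}^*)^n\times T}(f)\bigr)=\sum_{S}\chi\bigl(\pi^{-1}(U_S)\bigr)$. Over $U_S$ the equation $f=0$ becomes $f_0(\alpha)+\sum_{i\notin S}\beta_i f_i(\alpha)=0$, which does not involve the coordinates $\beta_i$ with $i\in S$; hence $\pi^{-1}(U_S)$ is a product with a free factor $(\mathbb{C}^*)^{|S|}$, and $\chi\bigl(\pi^{-1}(U_S)\bigr)=0$ as soon as $S\neq\emptyset$. (In particular the stratum on which all of $f_1,\dots,f_\ell$ vanish, where the fiber is all of $(\mathbb{C}^*)^\ell$ or empty, contributes nothing.) Thus $\chi\bigl(V_{(\mathbb{C}^*)^n\times T}(f)\bigr)=\chi\bigl(\pi^{-1}(U_\emptyset)\bigr)$ with $U_\emptyset=(\mathbb{C}^*)^n\setminus\bigl(\{f_1=0\}\cup\dots\cup\{f_\ell=0\}\bigr)$.

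Next I would treat the open stratum. On $U_\emptyset$ every $f_i(\alpha)$ is invertible, so $\gamma_i:=\beta_i f_i(\alpha)$ is an algebraic coordinate change on the fibers, identifying $\pi^{-1}(U_\emptyset)$ with $\{(\alpha,\gamma)\in U_\emptyset\times(\mathbb{C}^*)^\ell : \gamma_1+\dots+\gamma_\ell=-f_0(\alpha)\}$; the fiber over $\alpha$ now depends on $\alpha$ only through whether $f_0(\alpha)$ vanishes. Writing $U_\emptyset=U^+\sqcup U^0$ according to $f_0\neq 0$ resp.\ $f_0=0$, and setting $\delta_i=\gamma_i/(-f_0(\alpha))$ over $U^+$, one obtains genuine algebraic trivializations $\pi^{-1}(U^+)\cong U^+\times M_\ell$ and $\pi^{-1}(U^0)\cong U^0\times N_\ell$, where $M_\ell:=\{\gamma\in(\mathbb{C}^*)^\ell:\sum_i\gamma_i=1\}$ and $N_\ell:=\{\gamma\in(\mathbb{C}^*)^\ell:\sum_i\gamma_i=0\}$. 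Since $N_\ell$ carries a free diagonal $\mathbb{C}^*$-scaling action (equivalently, fixing $\gamma_1$ exhibits $N_\ell\cong\mathbb{C}^*\times(\text{a variety})$), $\chi(N_\ell)=0$; and the coordinate projection $\gamma\mapsto(\gamma_1,\dots,\gamma_{\ell-1})$ identifies $M_\ell$ with $(\mathbb{C}^*)^{\ell-1}\setminus M_{\ell-1}$, giving the recursion $\chi(M_\ell)=-\chi(M_{\ell-1})$ with base case $\chi(M_1)=1$, hence $\chi(M_\ell)=(-1)^{\ell-1}$.

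Assembling the pieces: $\chi\bigl(V_{(\mathbb{C}^*)^n\times T}(f)\bigr)=\chi(U^+)\chi(M_\ell)+\chi(U^0)\chi(N_\ell)=(-1)^{\ell-1}\chi(U^+)$, and $U^+=(\mathbb{C}^*)^n\setminus V_{(\mathbb{C}^*)^n}(f_0\cdots f_\ell)$. Since $\chi\bigl((\mathbb{C}^*)^n\bigr)=0$, additivity gives $\chi\bigl(V_{(\mathbb{C}^*)^n}(f_0\cdots f_\ell)\bigr)=-\chi(U^+)=-(-1)^{\ell-1}\chi\bigl(V_{(\mathbb{C}^*)^n\times T}(f)\bigr)=(-1)^\ell\chi\bigl(V_{(\mathbb{C}^*)^n\times T}(f)\bigr)$, which is the assertion. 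The only technical points are verifying that $\gamma_i=\beta_i f_i(\alpha)$ and $\delta_i=\gamma_i/(-f_0(\alpha))$ define morphisms with algebraic inverses on the indicated loci, and the elementary recursion for $\chi(M_\ell)$. I expect the main pitfall to be purely organizational: arranging the stratification so that every fiber bundle appearing is either trivial or has a fiber of vanishing Euler characteristic, so that nothing beyond products and additivity is ever needed.
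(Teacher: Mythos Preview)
Your argument is correct, and it takes a genuinely different route from the paper's. The paper proceeds by induction on $\ell$, reducing to $\ell=1$; there it works with the \emph{complement} $\mathcal{T}\setminus V_{\mathcal{T}}(f_0+\beta_1 f_1)$, uses a fibration over $(\mathbb{C}^*)^n\setminus V(f_0f_1)$ with fiber $\mathbb{C}$ minus two points, and finishes with a short excision argument. You instead handle all $\ell$ at once by stratifying the base according to which of $f_1,\dots,f_\ell$ vanish, observing that every stratum with $S\neq\emptyset$ contributes a genuine $(\mathbb{C}^*)^{|S|}$-factor and hence nothing, and on the open stratum you rescale to two explicit model fibers $M_\ell$ and $N_\ell$ whose Euler characteristics you compute directly. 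Your recursion $\chi(M_\ell)=-\chi(M_{\ell-1})$ with $\chi(M_1)=1$ is exactly right, and the product decomposition $N_\ell\cong\mathbb{C}^*\times\{\delta\in(\mathbb{C}^*)^{\ell-1}:\sum\delta_i=-1\}$ (via $\delta_i=\gamma_i/\gamma_1$) justifies $\chi(N_\ell)=0$ for $\ell\geq 2$, with $N_1=\emptyset$ covering the remaining case. The payoff of your approach is that no induction is needed and every ``fiber bundle'' that appears is a literal product, so only additivity and K\"unneth are used; the paper's version is shorter to write but hides the same content inside the $\ell=1$ step and the excision identities.
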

\begin{proof}
By an induction on $\ell$, the proof is reduced to the case that $\ell=1$.
Hence, let $\ell=1$ and denote $\mathcal{T} \!\coloneqq (\CC^*)^{n}\times T$.
We consider the fiber bundle $$\pi:\mathcal{T}\,\setminus \left( V_{\mathcal{T}}(f_0+\beta_1f_1)\cup V_{\mathcal{T}}(f_0f_1)\right)\to (\CC^*)^n\setminus V_{(\CC^*)^n}(f_0f_1)$$
defined by $\pi(\alpha,\beta_1)=\alpha$.
Any fiber $\pi^{-1}(\alpha)$ is a complex plane with two points removed.
By the multiplicative property of Euler characteristic, we obtain
\begin{equation}\label{eq:2.7}
\chi((\CC^*)^n\setminus V_{(\CC^*)^n}(f_0f_1))\,=\,-\chi\left( \mathcal{T}\,\setminus \left( V_{\mathcal{T}}(f_0+\beta_1f_1)\cup V_{\mathcal{T}}(f_0f_1)\right)\right).    
\end{equation}
The excision property of the Euler characteristic implies the following equalities:
\begin{align}
    \chi\left( \mathcal{T}\setminus  V_{\mathcal{T}}(f_0+\beta_1f_1)\right)
    \,=\,&\chi\left(  \mathcal{T}\setminus \left( V_{\mathcal{T}}(f_0+\beta_1f_1)\cup V_{\mathcal{T}}(f_0f_1)\right)\right) \nonumber\\
    &+
    \chi\left( V_{\mathcal{T}}(f_0f_1)\setminus V_{\mathcal{T}}(f_0+\beta_1f_1)\right);\label{eq:2.8}
\end{align}
and
\begin{align}
    \chi\left( V_{\mathcal{T}}(f_0f_1)\setminus V_{\mathcal{T}}(f_0+\beta_1f_1)\right)
    \,=\,&
    \chi\left( V_{\mathcal{T}}(f_0)\setminus V_{\mathcal{T}}(f_1)\right)\nonumber\\
    &+
    \chi\left( V_{\mathcal{T}}(f_1)\setminus V_{\mathcal{T}}(f_0)\right)\,=\,0. \label{eq:2.9}   
\end{align}
Here, the last equality follows from the equality $V_{\mathcal{T}}(f_1)\setminus V_{\mathcal{T}}(f_0)=V_{(\CC^*)^n}(f_1)\setminus V_{(\CC^*)^n}(f_0)\times T$ and $\chi(T)=0$.
In view of equations \eqref{eq:2.7},\eqref{eq:2.8} and \eqref{eq:2.9}, the theorem for $\ell=1$ follows.
\end{proof}

In general, for a morphism of algebraic varieties $\pi:F\to Z$, we define the Euler discriminant $\nabla_\chi^\pi(Z)$ by the formula \eqref{eq:2.3} where we replace $\chi_z$ by $|\chi(\pi^{-1}(z))|$. 
Using this notation, we can state the following corollary.

\begin{corollary}\label{cor:ED_morphism}
    Let $Z$ be a complex algebraic variety, and $f_0,\dots,f_\ell:(\mathbb{C}^*)^n\times Z\to\CC$ regular functions.
    Consider the projections $\pi_0:(\CC^*)^n\times Z\setminus V(f_0\cdots f_\ell)\to Z$ and $\pi:(\CC^*)^{n+\ell}\times Z\setminus V(f_0+\beta_1f_1+\cdots+ \beta_\ell f_\ell)\to Z$.
    Then, one has the equality $\nabla_\chi^{\pi_0}( Z)=\nabla_\chi^{\pi}( Z).$
\end{corollary}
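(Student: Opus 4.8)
The plan is to reduce the statement to a fiberwise application of Theorem \ref{thm:Cayley}. For each fixed $z \in Z$, the fiber $\pi_0^{-1}(z)$ is exactly $(\CC^*)^n \setminus V_{(\CC^*)^n}(f_0(\cdot;z)\cdots f_\ell(\cdot;z))$, the complement of the hypersurface cut out by the specialized Laurent polynomials $f_i(\cdot;z)$, and the fiber $\pi^{-1}(z)$ is $(\CC^*)^{n+\ell}\setminus V(f_0(\cdot;z)+\beta_1 f_1(\cdot;z)+\cdots+\beta_\ell f_\ell(\cdot;z))$, whose complement in $(\CC^*)^n\times T$ with $T=(\CC^*)^\ell$ is the Cayley object of Theorem \ref{thm:Cayley}. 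The first step is therefore to observe that Theorem \ref{thm:Cayley} applies verbatim to the specialized polynomials: one subtlety is that Theorem \ref{thm:Cayley} is stated for a complement $(\CC^*)^n\setminus V(f_1\cdots f_n)$, but its proof only uses the multiplicativity and excision properties of the Euler characteristic together with $\chi(T)=0$, so it holds for any finite family $f_0,\dots,f_\ell$ of Laurent polynomials with complex coefficients, which is all we need here for each individual $z$.

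Granting this, Theorem \ref{thm:Cayley} gives, for every $z\in Z$, the identity
\begin{equation*}
\chi\!\left(\pi_0^{-1}(z)\right)\,=\,(-1)^\ell\cdot\chi\!\left(\pi^{-1}(z)\right),
\end{equation*}
hence $\bigl|\chi(\pi_0^{-1}(z))\bigr| = \bigl|\chi(\pi^{-1}(z))\bigr|$ for all $z$. Denote this common value by $\psi(z)$. By the definition of the generalized Euler discriminant $\nabla_\chi^\pi(Z)$, both $\nabla_\chi^{\pi_0}(Z)$ and $\nabla_\chi^\pi(Z)$ are computed from the single function $\psi$: namely $\nabla_\chi^{\pi_0}(Z)=\{z\in Z\mid \psi(z)<\max_{z'\in Z}\psi(z')\}$, and the same formula defines $\nabla_\chi^\pi(Z)$. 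Since the two sets are defined by the identical recipe applied to the identical function, they coincide, which is the claimed equality.

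The only real point requiring care is the first step: making sure the hypotheses of Theorem \ref{thm:Cayley} are met fiberwise. The theorem as stated fixes genuine Laurent polynomials in $\alpha$ with no further parameters, whereas here $f_i$ depend on $z\in Z$; but for each fixed $z$ the functions $f_i(\cdot;z)$ are honest elements of $\CC[\alpha_1^{\pm1},\dots,\alpha_n^{\pm1}]$, so there is nothing to prove beyond this specialization. No genericity, flatness, or constructibility of $\psi$ is needed, because the Euler discriminant is defined purely set-theoretically via \eqref{eq:2.3}. I do not expect any serious obstacle; the argument is essentially a bookkeeping translation of Theorem \ref{thm:Cayley} from a single variety to a family over $Z$.
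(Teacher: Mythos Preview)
Your argument is correct and matches the paper's intent: the corollary is stated there without proof, as an immediate fiberwise consequence of Theorem~\ref{thm:Cayley}. One minor inaccuracy worth fixing: Theorem~\ref{thm:Cayley} is stated for the Euler characteristics of the \emph{hypersurfaces} $V(f_0\cdots f_\ell)$ and $V(f)$, not their complements, so the identity $\chi(\pi_0^{-1}(z))=(-1)^\ell\chi(\pi^{-1}(z))$ you invoke needs the additional (easy) step of using $\chi((\CC^*)^m)=0$ on both sides.
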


\subsection{Principal A-determinant}\label{sec:PAD}
We begin by recalling a definition of principal $A$-determinant \cite[Chapter~10]{GKZbook}. We follow the notation of Section~\ref{sec:2_EulerDiscriminant}, except that we restrict to the case of a unique Laurent polynomial $f=f_0$ with monomial support $A=A_0\subset \mathbb{Z}^n$ such that the first entry of any $\uu \in A$ is $1$.
The $A$-discriminant variety $\nabla_{\!A}$ is defined as the closure of the image of the incidence variety
$$
\biggl\{ (\var,z)\in (\CC^*)^n\times \CC^A\mid \frac{\partial f}{\partial\var_i}(\var;z)=0,\ i=1,\dots,n\biggr\}
$$
through the natural projection $\CC^A\times (\CC^*)^n\to\CC^A$.
The $A$-discriminant variety is a proper homogeneous subvariety of $\CC^A$.
When $\nabla_{\! A}$ is a hypersurface, we say $A$ is {\it non-defective}, and the reduced defining polynomial of $\nabla_{\!A}$ is called the $A$-discriminant, denoted by $\Delta_A$.
If $\nabla_{\!A}$
is not a hypersurface, we set $\Delta_A=1$.

Let ${\rm Conv}(A)$ be the polytope obtained as the convex hull of the columns of $A$ in ${\mathbb{R}^n=\mathbb{R}\otimes_{\mathbb{Z}}\mathbb{Z}^n}$.
For a face $Q$ of $\Conv(A)$, we denote 
\begin{equation}\label{eq:face_poly}
f_Q(\var;z)\,:=\,\sum_{\uu\in A\cap Q}z_\uu\var^\uu,
\end{equation}
where $A\cap Q$ denotes the elements $u$ of $A$ such that $\uu \in Q$.

Next, we recall the notion of multiplicity necessary to define the principal A-determinant.
Let $S\subset \mathbb{Z}^n$ be an abelian semigroup admissible in the sense of \cite[Chapter~5, Definition~3.4]{GKZbook}.
We write $\Xi(S)$ for its group completion and $\Xi_{\mathbb{R}}$ for $\Xi(S)\otimes_{\mathbb{Z}}\mathbb{R}$.
Let $K(S)$ (resp. $K_+(S)$) denote the convex hull of $S$ (resp. $S\setminus\{0\}$) in $\Xi_{\mathbb{R}}$.
The subdiagram volume $u(S)$ is the normalized volume of the closure of the set $K(S)\setminus K_+(S)$, denoted $K_-(S)$, with respect to the volume form induced by $\Xi(S)$, see \cite[Chapter~3, \S 3.D]{GKZbook}:
\begin{equation}\label{eq:def_sub_vol}
    u(S)\,\coloneqq\,{\rm vol}_{\Xi(S)}(K_-(S)).
\end{equation}
For the trivial semigroup $S=\{0\}$, we set $u(S):=1$.
We write $S(A)$ for the subsemigroup of $\mathbb{Z}^n$ generated by $A$ and $0$.
In our setup, a face $Q$ of $\Conv(A)$ is identified with $S(Q)$ which is the subsemigroup of $S(A)$ generated by $A\cap Q$ and $0$. 

Given a face $Q$ of $\Conv (A)$, we define the number $i(Q,A)$ as the index
\begin{equation}
i(Q,A) \, \coloneqq\,[\mathbb{Z}^{n}\cap{\rm Lin}_{\mathbb{R}}(Q):{\rm Lin}_{\mathbb{Z}}(A\cap Q)].
\end{equation}
Here, for a subset $B\subset\mathbb{Z}^{n}$, ${\rm Lin}_{\mathbb{R}}(B)$ (resp. ${\rm Lin}_{\mathbb{Z}}(B)$) denotes the vector subspace spanned by $B$ in $\mathbb{R}^{n}=\mathbb{R}\otimes_{\mathbb{Z}}\mathbb{Z}^{n}$ (resp. $\mathbb{Z}^{n}$).
We define a semigroup $S(A)/Q$ as the subsemigroup of $\mathbb{Z}^{n}/\mathbb{Z}^{n}\cap{\rm Lin}_{\mathbb{R}}(Q)$ generated by $A$ and $0$.
The principal $A$-determinant is defined as the product over all faces $Q<\Conv(A)$:
 \begin{equation}\label{eq:PAD}
    E_{A}\,\coloneqq
    \,\prod_{\substack{Q<\Conv(A)\\ Q:\text{non-defective}}} \Delta_{A\cap Q}^{m_Q},  
\end{equation}
where the multiplicity $m_Q$ is given by
$
m_Q= i(Q,A)\cdot u(S(A)/Q).
$

\begin{proposition}\label{prop:degree}
The degree of the principal $A$-determinant equals $n\cdot {\rm vol}_{\ZZ}(\Conv(A)).
$
\end{proposition}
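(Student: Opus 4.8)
The plan is to identify the total degree of $E_A$ with the coordinate sum of a vertex of its Newton polytope, and to compute that sum combinatorially from the description of the Newton polytope as the secondary polytope of $A$.

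First I would record that $E_A$ is a homogeneous polynomial in the coefficients $z=(z_\uu)_{\uu\in A}$. Indeed, each discriminant variety $\nabla_{A\cap Q}$ is a cone in $\CC^{A\cap Q}$, so each $A$-discriminant $\Delta_{A\cap Q}$ is homogeneous, and hence so is the product $E_A=\prod_Q\Delta_{A\cap Q}^{m_Q}$, of degree $d:=\deg E_A=\sum_Q m_Q\deg\Delta_{A\cap Q}$. Consequently every monomial $\prod_{\uu}z_\uu^{c_\uu}$ occurring in $E_A$ satisfies $\sum_{\uu\in A}c_\uu=d$; equivalently, every vertex of the Newton polytope $\mathrm{Newt}(E_A)$ has coordinate sum $d$.

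Next I would invoke the theorem of Gelfand--Kapranov--Zelevinsky \cite[Chapter~10]{GKZbook} identifying $\mathrm{Newt}(E_A)$ with the secondary polytope $\mathrm{Sec}(A)$. Its vertices are the functions $\phi_T\colon A\to\ZZ_{\ge 0}$ attached to the regular triangulations $T$ of $(A,\Conv(A))$, with $\phi_T(\uu)=\sum{\rm vol}_{\ZZ}(\sigma)$, the sum taken over the maximal simplices $\sigma$ of $T$ that have $\uu$ as a vertex. Since the first coordinate of every element of $A$ equals $1$ and $A$ spans $\ZZ^n$, the polytope $\Conv(A)$ is $(n-1)$-dimensional, so each maximal simplex of $T$ has exactly $n$ vertices. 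Fixing any regular triangulation $T$ and interchanging the order of summation,
\begin{align*}
d=\sum_{\uu\in A}\phi_T(\uu) &=\sum_{\sigma}{\rm vol}_{\ZZ}(\sigma)\cdot\#\{\uu\in A:\uu\text{ is a vertex of }\sigma\}\\
&=n\sum_{\sigma}{\rm vol}_{\ZZ}(\sigma)=n\cdot{\rm vol}_{\ZZ}(\Conv(A)),
\end{align*}
where the last two sums range over the maximal simplices $\sigma$ of $T$ and the final equality holds because $T$ partitions $\Conv(A)$. This is the asserted formula.

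The only substantial input is the GKZ description of $\mathrm{Newt}(E_A)$; the rest is bookkeeping, the points needing (minor) care being the compatibility of the normalization of ${\rm vol}_{\ZZ}$ used for $\mathrm{Sec}(A)$ with the one in the statement, and the verification that $\dim\Conv(A)=n-1$ so that the maximal simplices of $T$ have exactly $n$ vertices. If one preferred to avoid citing the Newton-polytope theorem, one could instead evaluate $d=\sum_Q m_Q\deg\Delta_{A\cap Q}$ directly from the known degree formula for $A$-discriminants together with a summation identity over the faces $Q<\Conv(A)$; this is more laborious and is essentially the content of \cite[Chapter~10]{GKZbook}.
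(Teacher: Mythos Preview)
Your argument is correct and takes a genuinely different route from the paper. The paper proves the proposition by invoking the \emph{resultant} definition of the principal $A$-determinant,
\[
E_A \,=\, R_A\!\left(f,\ \alpha_1\frac{\partial f}{\partial\alpha_1},\ \dots,\ \alpha_n\frac{\partial f}{\partial\alpha_n}\right),
\]
and then applying the scaling property of the $A$-resultant \cite[Chapter~8, Corollary~2.2]{GKZbook}: replacing $f$ by $a\cdot f$ multiplies $E_A$ by $a^{\,n\cdot\mathrm{vol}_{\ZZ}(\Conv(A))}$, which immediately reads off the degree. You instead use the \emph{secondary polytope} theorem $\mathrm{Newt}(E_A)=\mathrm{Sec}(A)$ \cite[Chapter~10]{GKZbook} and compute the coordinate sum of a vertex $\phi_T$ by double counting over the maximal simplices of a regular triangulation. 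Both arguments ultimately rest on a single nontrivial citation from GKZ; the paper's is slightly more economical (a scaling identity rather than a full Newton-polytope description), while yours is more combinatorial and makes visible why the factor $n$ appears: it is the number of vertices of each top-dimensional simplex. Your caveats about the normalization of $\mathrm{vol}_{\ZZ}$ and about $\dim\Conv(A)=n-1$ are exactly the right ones to flag, and they hold under the paper's standing assumptions.
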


\begin{proof}
    Let us recall the definition of the principal $A$-determinant of $f$ as the $A$-resultant $R_A$ (\cite[Chapter~8, \S 2]{GKZbook}):
    \begin{equation}
E_A\,=\,R_A\biggl(f, \var_1\frac{\partial f}{\partial\var_1},\,\dots\,,\var_n\frac{\partial f}{\partial\var_n}\biggr) \,\in\, \CC[z_\uu\,;\,\uu\in A].
    \end{equation}
    The equivalence with \eqref{eq:PAD} is proved in \cite[Chapter~10, Theorem~1.2]{GKZbook}.
    For a non-zero complex number $a$, the principal $A$-determinant $E_A$ is multiplied by $a^{n\cdot {\rm vol}_{\ZZ}({\Conv(A))}}$ by replacing $f$ by $a\cdot f$ (\cite[Corollary 2.2, Chapter 8]{GKZbook}).
    This proves the proposition.
\end{proof}

The multiplicity $m_Q$ can be interpreted in terms of the difference of Euler characteristics of very affine varieties \cite[Theorem~2.36]{esterov2013discriminant}.
Although the following theorem is essentially known in the literature,  we present a proof that uses Kashiwara's local index theorem \cite{kashiwara1973index}.
\begin{theorem}\label{thm:drop_Euler}
   For $Q$ a non-defective face of $\Conv(A)$, the multiplicity $m_Q$ is given by
    $$
    \chi_{z^*}-\chi_{z_Q},
    $$
    where $z^*\in\CC^A\setminus \{E_A =0\}$ and $z_Q\in \{z\in\CC^A\mid\Delta_{A\cap Q}(z)=0\}$ are generic points.
\end{theorem}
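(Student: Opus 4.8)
The plan is to reduce the statement to a local computation via Kashiwara's local index theorem applied to a suitable regular holonomic $D$-module on $\CC^A$, then to identify the resulting local contribution with the subdiagram volume $m_Q = i(Q,A)\cdot u(S(A)/Q)$. First I would fix a generic point $z_Q$ on the irreducible hypersurface $\{\Delta_{A\cap Q}=0\}$ and analyze how the very affine variety $X_z = (\CC^*)^n\setminus V_{A,z}$ degenerates as $z$ specializes from a generic $z^*$ to $z_Q$. The key geometric input is that the drop $\chi_{z^*}-\chi_{z_Q}$ is the only thing that matters, and by the semicontinuity statement quoted from \cite[Theorem 3.1]{fevola2024principal} this drop is a well-defined positive integer along the codimension-one stratum through $z_Q$.

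Next I would set up the computation using the Gauss--Manin / $A$-hypergeometric $D$-module: consider the family $\pi\colon (\CC^*)^n\times \CC^A \setminus V(f) \to \CC^A$ with the rank-one local system attached to generic exponents, push forward, and obtain a regular holonomic complex on $\CC^A$ whose generic rank is $\chi_{z^*}$. By Kashiwara's local index theorem, the stalk Euler characteristic of the solution complex at $z_Q$ equals a sum $\sum_i (-1)^{\dim} \, \mathrm{mult}_{\Lambda_i}\cdot \mathrm{Eu}_{\Lambda_i}(z_Q)$ over the irreducible components $\Lambda_i$ of the characteristic variety, weighted by local Euler obstructions. Since $z_Q$ is a generic smooth point of the single discriminant component $\{\Delta_{A\cap Q}=0\}$, only the conormal to that component and the zero section contribute, and the contribution reduces to the multiplicity of the conormal variety $T^*_{\{\Delta_{A\cap Q}=0\}}\CC^A$ in the characteristic cycle.

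The heart of the matter is then to show that this conormal multiplicity equals $i(Q,A)\cdot u(S(A)/Q)$. For this I would localize near $z_Q$ and use the structure of $A$-hypergeometric systems: passing to the face $Q$ amounts to taking the associated graded / initial system along the face, which is governed by the semigroup $S(A)/Q$ transverse to $\mathrm{Lin}_{\RR}(Q)$, while the index $i(Q,A)$ accounts for the sublattice generated by $A\cap Q$ inside $\ZZ^n\cap\mathrm{Lin}_{\RR}(Q)$ (this is exactly the combinatorial data appearing in \eqref{eq:PAD}). Concretely, the normalized subdiagram volume $u(S(A)/Q) = \mathrm{vol}_{\Xi(S(A)/Q)}(K_-(S(A)/Q))$ is the known formula for the multiplicity of the principal symbol of a one-variable $A$-hypergeometric operator in the transverse direction, and multiplying by $i(Q,A)$ accounts for the covering degree when passing from the lattice spanned by $A\cap Q$ to the full ambient lattice. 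Alternatively — and perhaps more cleanly — one can bypass index theory entirely by invoking Theorem~\ref{thm:Cayley} together with the product formula \eqref{eq:PAD} directly: the drop $\chi_{z^*}-\chi_{z_Q}$ picks out precisely the $A$-discriminant of the face $Q$ with its GKZ multiplicity, because specializing to a generic point of $\{\Delta_{A\cap Q}=0\}$ makes $f_Q$ acquire a critical point in the torus while leaving all other face polynomials generic, and one tracks the change in Euler characteristic through a Morse-theoretic count of critical points of $\prod f_i$ with the twist.

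The main obstacle I anticipate is the precise bookkeeping in the index-theoretic step: verifying that no characteristic variety component other than $T^*_{\{\Delta_{A\cap Q}=0\}}\CC^A$ contributes at the generic point $z_Q$ (which requires knowing the characteristic variety of the $A$-hypergeometric system is "as expected" along this stratum), and matching the local Euler obstruction / multiplicity with the combinatorial quantity $i(Q,A)\cdot u(S(A)/Q)$ rather than some other normalization. Getting the normalization of the volume form on $\Xi(S(A)/Q)$ to line up with the $D$-module multiplicity — and confirming that the index $i(Q,A)$ is exactly the lattice-index correction — is where the argument must be done carefully; everything else is a routine application of the multiplicativity and excision properties of the Euler characteristic already used in the proof of Theorem~\ref{thm:Cayley}.
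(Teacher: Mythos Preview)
Your approach is essentially the same as the paper's: apply Kashiwara's local index theorem to the GKZ system and read off the drop in Euler characteristic from the characteristic cycle. The difference is that the paper does not attempt to compute the conormal multiplicity from scratch; instead it cites the known characteristic cycle formula for the GKZ $D$-module $M_A(c)$ with generic parameter (Loeser \cite[6.3]{loeser1991polytopes}, with the argument traceable to \cite[Theorem~4.6]{gelfand1990generalized}), which already identifies the multiplicity of $T^*_{\nabla_{A\cap Q}}\CC^A$ as $m_Q = i(Q,A)\cdot u(S(A)/Q)$. With that input, the proof is two lines: Kashiwara's index formula at $z_Q$ gives $\chi_{z_Q}=m_0-m_Q$, and $m_0=\chi_{z^*}$.

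The ``obstacle'' you flagged --- matching the conormal multiplicity to the combinatorial quantity and checking no other components contribute at a generic $z_Q$ --- is exactly the content of those citations, so your plan to re-derive this by hand is doing far more work than necessary. Your alternative route through Theorem~\ref{thm:Cayley} and a Morse-theoretic count is not pursued in the paper and would require substantially more justification than you sketch (in particular, tracking the degeneration of critical points as $z\to z_Q$ and showing the count drops by exactly $m_Q$ is not obviously easier than the $D$-module argument).
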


\begin{proof}
    Firstly, note that the characteristic cycle for the GKZ system $M_A(c)$ with generic parameter $c$ is given by 
    $$m_0T^*_{\CC^A}\CC^A\,+\,
    \sum_{Q<\Conv(A)}m_Q T_{\nabla_{\!A\cap Q}}^*\CC^A,
    $$
    where $T^*$ denotes the conormal bundle of $\nabla_{\! A\cap Q}$.
    This result is stated in \cite[6.3]{loeser1991polytopes} and its proof can be read off from the proof of \cite[Theorem~4.6]{gelfand1990generalized}.
    In view of Kashiwara's local index formula \cite{kashiwara1973index} (see also \cite[Theorem~4.3.25]{dimca2004sheaves}), it follows that $\chi_{z_Q}=m_0-m_Q.$ This concludes the proof since $m_0=\chi_{z^*}$.
\end{proof}

Let $\vol(A)$ denote the lattice volume of the polytope $\Conv(A)$. The proof of the following theorem is available in \cite[Theorem~13]{amendola2019maximum}, establishing a connection between the vanishing locus of the principal $A$-determinant and the decrease in the Euler characteristic of $X_z$. 
\begin{theorem}
    The sigend Euler characteristic $(-1)^n\cdot \chi(X_z)$ is equal to  $\vol(A)$ if and only if $z\in \C^A\setminus V_{\CC^A}(E_A)$. Furthermore, when $E_A(z)=0$, we have $(-1)^n\cdot \chi(X_z) < vol(A).$
\end{theorem}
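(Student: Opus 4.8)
The theorem follows once we prove two facts: first, that the generic --- equivalently the maximal --- value $\chi^*$ of $\chi_z$ on $\CC^A$ equals $\vol(A)$; second, that the Euler discriminant $\nabla_\chi(\CC^A)$ and the vanishing locus $V_{\CC^A}(E_A)$ coincide as sets. Indeed, $\nabla_\chi(\CC^A)=\{z:\chi_z<\chi^*\}$ is a closed subvariety by \cite[Theorem~3.1]{fevola2024principal}, and it is proper since the maximum $\chi^*$ is attained; hence its complement is a nonempty, connected open subset of the irreducible space $\CC^A$ on which $\chi_z\equiv\chi^*$. Granting the two facts, $\chi_z=\vol(A)\iff\chi_z=\chi^*\iff z\notin\nabla_\chi(\CC^A)\iff z\notin V_{\CC^A}(E_A)$, and for $z\in V_{\CC^A}(E_A)=\nabla_\chi(\CC^A)$ one gets $\chi_z<\chi^*=\vol(A)$, which is the ``furthermore'' part.

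For the first fact, observe that for generic $z$ the Laurent polynomial $f_z$ is non-degenerate with respect to every face of its Newton polytope $\Conv(A)$, so Kouchnirenko's formula for the Euler characteristic of a non-degenerate hypersurface complement in a torus gives $\chi_z=\vol(A)$; since $\chi_z$ is constructible and takes the value $\chi^*$ on the complement of the proper closed set $\nabla_\chi(\CC^A)$, this generic value is exactly $\chi^*$. For the inclusion $V_{\CC^A}(E_A)\subseteq\nabla_\chi(\CC^A)$ we use the factorization $E_A=\prod_{\substack{Q<\Conv(A)\\ Q\ \text{non-defective}}}\Delta_{A\cap Q}^{m_Q}$: this realizes $V_{\CC^A}(E_A)$ as the union of the irreducible discriminant hypersurfaces $\nabla_{A\cap Q}$, each occurring with $m_Q\ge 1$, and Theorem~\ref{thm:drop_Euler} gives $\chi_{z_Q}=\chi^*-m_Q<\chi^*$ at a generic point $z_Q$ of $\nabla_{A\cap Q}$. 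As $\nabla_\chi(\CC^A)$ is closed and contains a dense subset of each component, it contains all of $V_{\CC^A}(E_A)$.

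It remains to prove $\nabla_\chi(\CC^A)\subseteq V_{\CC^A}(E_A)$, that is, $\chi_z=\chi^*$ whenever $E_A(z)\neq 0$. I would argue through the GKZ hypergeometric $D$-module $M_A(c)$ with generic parameter $c$. Its characteristic cycle, recalled in the proof of Theorem~\ref{thm:drop_Euler}, is $m_0\,T^*_{\CC^A}\CC^A+\sum_{Q<\Conv(A)}m_Q\,T^*_{\nabla_{A\cap Q}}\CC^A$, so its singular locus is precisely $\bigcup_Q\nabla_{A\cap Q}=V_{\CC^A}(E_A)$; the defective faces contribute conormals of smaller dimension and add no components, which is exactly why they are absent from $E_A$. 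Consequently, over the connected open set $\CC^A\setminus V_{\CC^A}(E_A)$ the module $M_A(c)$ is an integrable connection whose rank equals the holonomic rank of $M_A(c)$, namely $\vol(A)$. On the other hand, for generic $c$ the solutions of $M_A(c)$ at a point $z$ are the Euler integrals $\int_\Gamma f_z(\alpha)^{c_0}\alpha_1^{c_1}\cdots\alpha_n^{c_n}\,\tfrac{\d\alpha_1}{\alpha_1}\wedge\cdots\wedge\tfrac{\d\alpha_n}{\alpha_n}$ over twisted cycles $\Gamma$ on $X_z$, and for generic $c$ this identifies the space of local solutions at $z$ with the twisted cohomology $H^n(X_z,\mathcal{L}_z)$, where $\mathcal{L}_z$ is the rank-one local system with the monodromies prescribed by $c$; genericity of $c$ also forces $H^i(X_z,\mathcal{L}_z)=0$ for $i\neq n$ uniformly in $z$, whence $\dim H^n(X_z,\mathcal{L}_z)=(-1)^n\chi(X_z,\mathcal{L}_z)=\chi_z$, the last equality because $\mathcal{L}_z$ has rank one. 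Comparing the two computations, $\chi_z=\vol(A)=\chi^*$ for every $z\notin V_{\CC^A}(E_A)$, as required.

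The main obstacle is this last step, and specifically the two structural inputs it uses from hypergeometric $D$-module theory: that the singular locus of $M_A(c)$ is exactly $V_{\CC^A}(E_A)$, with no spurious components coming from defective faces, and that the twisted cohomology is concentrated in degree $n$ uniformly in $z$ for a single fixed generic $c$. Both are available in the literature. A more geometric alternative to this step would be to prove directly that the family $\{X_z\}_z$ is topologically locally trivial over $\CC^A\setminus V_{\CC^A}(E_A)$ using the Thom--Mather isotopy lemmas on the associated toric compactification, where, once more, the correct handling of defective faces is the crux. A self-contained proof of the statement is given in \cite[Theorem~13]{amendola2019maximum}.
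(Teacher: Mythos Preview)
The paper does not actually prove this theorem; it merely cites \cite[Theorem~13]{amendola2019maximum} and moves on. Your proposal goes considerably further: you sketch a proof via the GKZ hypergeometric $D$-module $M_A(c)$ and Kashiwara's local index formula, combining Kouchnirenko's formula for the generic value with the characteristic-cycle description already invoked in the proof of Theorem~\ref{thm:drop_Euler}. The logical structure of your two-step reduction is sound, and you correctly isolate the two genuinely delicate inputs---that the singular locus of $M_A(c)$ is \emph{exactly} $V_{\CC^A}(E_A)$ (i.e., defective faces contribute no extra components), and that twisted cohomology is concentrated in top degree uniformly in $z$ for one fixed generic parameter. The second of these is essentially Lemma~\ref{lem:vanishing} of the present paper; the first is the real crux and does require an external reference, as you note.

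Since you conclude by citing the very same source the paper cites, your treatment is strictly more informative than the paper's while ultimately resting on the same external result. There is no discrepancy of approach to report, only a difference in level of detail.
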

Note that, when the subvariety $Z$ as in \eqref{eq:2.3} is the entire parameter space, we have $\nabla_{\chi}(\CC^A) = V_{\CC^A}(E_A)$.
The following sections will concentrate on the case where the very affine variety $X_z$ is the complement of hyperplanes. We will present formulas for the Euler discriminant and the principal $A$-determinant.

\section{Principal A-determinant of a sparse arrangement}\label{sec:PAD_sparse}

\subsection{Sparse hyperplane arrangements}\label{sec:3.1}

Any hyperplane arrangement in $\mathbb{P}^k={\rm Proj}\CC[\var_0,\dots,\var_k]$ with $n+1$ hyperplanes is specified by a point in the Grassmannian $Gr(k+1,n+1)$ which is realized as a quotient of the space of $(k+1)\times(n+1)$ matrices of rank $k+1$ by the left multiplication by ${\rm GL}(k+1,\CC)$.
Namely, a point $z\in {\rm GL}(k+1,\CC)$ represented by a $(k+1)\times(n+1)$ matrix $(z_{ij})_{\substack{i=0,\dots,k\\ j=0,\dots,n}}$ defines a collection of linear forms $\{ h_j(\var;z):=z_{0j}\var_0+\cdots+z_{kj}\var_k\}_{j=0}^n$, whose vanishing locus defines a hyperplane arrangement 
\begin{equation}\label{eq:Az}
    \mathscr{A}_z \,\coloneqq\,\bigcup_{j=0}^n H_j,
\end{equation} 
where $H_j:=\{\var\in\mathbb{P}^k\mid h_j(\var;z)=0\}$.
Note that $H_j$ can be the whole projective space $\mathbb{P}^k$ which we do not exclude.
In this paper, we work on an open affine chart $U$ of ${\rm Gr}(k+1,n+1)$ where the matrix consisting of the first $k+1$ columns of the matrix is invertible.
By ${\rm GL}(k+1,\CC)$ action, any point in $U$ is given by a $(k+1)\times(n+1)$ matrix 
\begin{equation}\label{eq:C(z)}
\left[\,\,
E_{k+1}\,\mid\, z\,\,
\right]
\ \ \text{where}\ \ 
z\,=\,
\left[
\begin{array}{ccc}
z_{0k+1}&\cdots&z_{0n}\\
\vdots&\ddots&\vdots\\
z_{kk+1}&\cdots&z_{kn}
\end{array}
\right],
\end{equation}
and $E_{k+1}$ stands for the unit matrix of size $k+1$.
We identify $U$ with the Euclidean space $\CC^{(k+1)(n-k)}$. Consistently with Section~\ref{sec:2_EulerDiscriminant}, for any matrix $z\in\CC^{(k+1)\times(n-k)}$, we set 
\begin{equation}\label{eq:Xz_hyperplanes}
X_z\,\coloneqq\,\mathbb{P}^k\setminus\mathscr{A}_z.  
\end{equation}
We are interested in the study of its Euler discriminant $\nabla_\chi( Z)$. 
By \cite[Theorem 3.3.7]{orlikarrangements}, the signed Euler characteristic of $X_z$ is given by a combinatorial invariant called the {\it beta invariant}.
The beta invariant is also equal to the number of bounded regions when the hyperplane arrangement is real (\cite[Theorem 3.3.9]{orlikarrangements}).
Therefore, the Euler discriminant in such a case describes the locus for which some of the bounded chambers shrink.  

When $ Z = \CC^{(k+1)(n-k)}$, the Euler discriminant coincides with the vanishing locus of a principal $A$-determinant which was described in \cite[Chapter 9, \S1]{GKZbook}:
\begin{equation}\label{eq:EA_dense}
    E_A(z) \, = \, \prod_{I,J:|I|=|J|} \det(z_{I,J}),
\end{equation}
where the product runs over all subsets $I$, and $J$, of the set of rows, respectively columns, of the matrix $z$ from \eqref{eq:C(z)}.

We will focus on the case of a general subvariety $ Z$ in Section~\ref{sec:Euler_discriminant}.
In this section, we explore the case when $Z$ is an intersection of coordinate hyperplanes.
Also in this scenario, the Euler discriminant is given by the vanishing locus of a principal $A$-determinant.

Given a subspace $ Z$ as above, we define a bipartite graph $G$ with vertex set $V(G)=\{ 0,1,\dots,n\}$ and edge set $E(G)=\{ ij\mid z_{ij}\neq 0\}$.
In what follows, we write $z_G$ for a point $z\in Z$ to stress that the non-zero entries are specified by the edge set of the graph $G$.
In the context of graph theory, $z_G$ is called the Edmonds matrix of $G$ \cite[12.8.3]{tucker2004computer}.
We assume that any vertex of $G$ is an endpoint of an edge.
For a subset $T$ of the vertex set $V(G)$, the symbol $G_T$ denotes the subgraph of $G$ induced by $T$.
More precisely, $G_{T}$ denotes a subgraph obtained by deleting all the vertices in $V(G)\setminus T$ and the edges connected to them.
The \textit{graph neighborhood} $N(G;T)$ is the set of vertices in $V(G)$ which are connected to an element of $T$ by an edge of $G$. Let us set $V_1:=\{ 0,1,\dots,k\}$ (the left vertex set) and $V_2:=\{ k+1,\dots,n\}$ (the right vertex set).

By assigning a variable to each vertex $i\in V(G)$, we consider the vector space $\mathbb{R}^{n+1}$ and write ${e}_i$ for its $i$-th standard unit vector.
Here by convention, we note that $e_0=(1,0,\dots,0)$.
Consider the set 
\begin{equation}\label{eq:A_G}
    A_G\,:=\,\{ a_{ij}:=e_{i}+e_{j}\mid ij\in E(G)\}.
\end{equation}
Then, the convex hull of $A_G$ is called an \textit{edge polytope} of $G$ and denoted by $P_G$.

{We assume that $G$ is a connected graph.}
Otherwise, $G$ is decomposed into a disjoint union $G_1\sqcup\cdots\sqcup G_\ell$ with each $G_i$ connected.
The associated decomposition of vertices $V(G)=V(G_1)\sqcup\cdots\sqcup V(G_\ell)$ induces a direct product decomposition $\mathbb{R}^{n+1}=\mathbb{R}^{V(G_1)}\times\cdots\times\mathbb{R}^{V(G_\ell)}$. In this case, the edge polytope of $G$ is the direct product of those of $G_1,\dots,G_\ell$:
\begin{equation}
    P_G\,=\,P_{G_1}\times\cdots\times P_{G_\ell}.
\end{equation}

For a comprehensive introduction to polytopes, we refer to \cite{ziegler2012lectures}. For a polytope $P\subset \RR^{n+1}$, a \text{face} $F\subset P$ is the set of points in the polytope $P$ that maximizes a linear functional $\phi : \RR^{n+1} \to \RR$. The faces of dimension $\dim(P)-1$ are called \textit{facets}, where the dimension of a face is the dimension of the affine space spanned by its points. 

\begin{remark}\label{rmk:PG_NewtonPolytope}
   It is clear that the edge polytope $P_G$ is  the convex hull of the monomial support of the polynomial arising from the Cayley configuration of the monomial supports of the linear forms $h_j$, see \eqref{eq:Cayley_poly}. More explicitly, 
consider the polynomial ring $\CC[\var]$ where $\var = (\var_0,\dots,\var_{k},\var_{k+1},\dots,\var_n)$, then the polynomial associated to $A_G$ is
\begin{equation}\label{eq:f_G}
       f_G (\var;z) \, \coloneqq \, \sum_{ij \in E(G)} z_{ij} \var_i \var_j. 
\end{equation}
When setting $\var_{k+1}=1$ we get precisely the polynomial \eqref{eq:Cayley_poly} in the context of this section. Corollary~\ref{cor:ED_morphism} then guarantees that to study the Euler discriminant of $X_z$ we can look at the principal $A$-determinant for the matrix $A_G$.
\end{remark}

\subsection{Face structure of edge polytope of a bipartite graph}
Ohsugi and Hibi \cite{ohsugi1998normal} gave an explicit description of the facet structure of the edge polytope $P_G$ associated to a bipartite graph $G$, and also investigated its dimension.
The following result resumes the facet structure of $P_G$, as stated in \cite[Theorem~1.7]{ohsugi1998normal}:

\begin{theorem}[\cite{ohsugi1998normal}]\label{thm:HibiOhsugi}
    Let $Q$ be a facet of $P_G$.
    Then, $Q$ is one of the following types:
    \begin{itemize}
        \item[(a)] $Q$ is associated to an {\it ordinary} vertex $i_0$, i.e., a vertex such that $G_{V(G)\setminus \{ i_0\}}$ is connected. In this case, $F$ is given by the convex hull of a set $\{ a_{ij}\mid i,j\neq i_0\}$.
        \item[(b)] $F$ is associated to an {\it acceptable} subset $\varnothing\neq T\subset V_1$, i.e., a subset such that both $G_{T\cup N(G;T)}$ and $G_{V(G)\setminus(T\cup N(G;T))}$ are connected and the latter one is {\it non-trivial} in the sense that it has at least one edge. In this case, $F$ is given by the convex hull of a set $\{ a_{ij}\mid ij\in E(G_{T\cup N(G;T)})\cup E(G_{V(G)\setminus(T\cup N(G;T))})\}$.
    \end{itemize}
\end{theorem}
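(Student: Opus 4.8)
We outline the argument; the full details are carried out in \cite[Theorem~1.7]{ohsugi1998normal}. Throughout, $G$ is connected and bipartite with bipartition $V(G)=V_1\sqcup V_2$ and $|V(G)|=n+1$, so every vertex $a_{ij}=e_i+e_j$ of $P_G$ satisfies the two affine relations $\sum_{\ell\in V(G)}x_\ell=2$ and $\sum_{i\in V_1}x_i=\sum_{j\in V_2}x_j$. The first step is to pin down $\dim P_G$: these relations force $\dim P_G\le n-1$, and connectedness of $G$ supplies the matching lower bound, since the signless incidence matrix of a connected bipartite graph on $n+1$ vertices has rank $n$ and its columns all lie in $\{\sum_\ell x_\ell=2\}$ but not at the origin, so their affine hull has dimension $n-1$. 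Hence $\dim P_G=n-1$, the facets of $P_G$ are exactly the faces of dimension $n-2$, and a supporting functional is well defined only modulo the two relations above; we will use this freedom to normalize.

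Next I would check that the two listed families really are facets. For an ordinary vertex $i_0$, the functional $\phi=-x_{i_0}$ attains its maximum $0$ on $P_G$ precisely along the edges avoiding $i_0$, so the face it cuts out is $P_{G_{V(G)\setminus\{i_0\}}}$; this is the edge polytope of a connected bipartite graph on $n$ vertices, hence has dimension $n-2$, exactly because $i_0$ is ordinary. For an acceptable $T\subseteq V_1$ with $U:=N(G;T)\subseteq V_2$, take $\phi=\sum_{i\in T}x_i-\sum_{j\in U}x_j$. Since $U=N(G;T)$, an edge $ij$ contributes $0$ to $\phi(a_{ij})$ when it lies inside $T\cup U$ or inside $V(G)\setminus(T\cup U)$, and $-1$ when it runs from $V_1\setminus T$ to $U$; at least one such edge exists, for otherwise $T\cup U$ would be a union of connected components of $G$, which contradicts connectedness of $G$ together with non-triviality (hence non-emptiness) of the complement. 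Thus $\phi$ is non-constant on $P_G$ and its maximal face is $F=\Conv\{a_{ij}\mid ij\in E(G_{T\cup U})\cup E(G_{V(G)\setminus(T\cup U)})\}$. The two groups of vertices of $F$ occupy complementary coordinate blocks and lie in disjoint, skew affine subspaces, so $F$ is the affine join of $P_{G_{T\cup U}}$ and $P_{G_{V(G)\setminus(T\cup U)}}$; connectedness of both induced subgraphs (with the complement non-trivial) then gives $\dim F=(|T\cup U|-2)+(|V(G)\setminus(T\cup U)|-2)+1=n-2$, so $F$ is a facet.

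The real content is the converse: every facet is of type (a) or (b). Here I would take a facet $F$, cut out by $\phi=\sum_\ell c_\ell x_\ell$ with $\max_{P_G}\phi=0$, and normalize using the two relations so that $\max_{i\in V_1}c_i=0$; then $c_i\le 0$ on $V_1$, $c_i+c_j\le 0$ on every edge, equality holds on a nonempty set $E_0$ of ``tight'' edges, and $F=\Conv\{a_{ij}\mid ij\in E_0\}$. One then analyses the tight subgraph $(V(G),E_0)$ alongside the sign pattern of $(c_\ell)$: setting $T=\{i\in V_1\mid c_i=0\}$, one tracks which vertices are reachable from $T$ along tight edges and which connected components the tight edges carve out. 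Since $F$ is a facet, $E_0$ must be large enough that $\Conv\{a_{ij}\mid ij\in E_0\}$ has dimension $n-2$, and the bipartite structure then forces exactly one of two outcomes: either the tight subgraph misses a single vertex, which must be ordinary (type (a)); or $V(G)$ splits as $(T\cup N(G;T))\sqcup(V(G)\setminus(T\cup N(G;T)))$ with both parts connected and the second non-trivial (type (b)). I expect this dichotomy to be the main obstacle: ruling out all the intermediate component configurations of the tight subgraph under the chosen normalization is a delicate case analysis, and it is precisely at that point that I would reproduce, or simply invoke, the argument of Ohsugi and Hibi \cite{ohsugi1998normal}.
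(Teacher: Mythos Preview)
The paper does not supply its own proof of this theorem: it is quoted verbatim as \cite[Theorem~1.7]{ohsugi1998normal} and used as a black box, so there is nothing in the paper to compare your sketch against. Your outline is a faithful reconstruction of the Ohsugi--Hibi argument (dimension count via the rank of the signless incidence matrix, explicit supporting functionals for the two families, then the case analysis on the tight subgraph for the converse), and you correctly identify the converse dichotomy as the nontrivial step and defer to the original reference there, which is exactly what the paper itself does.
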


Thus, any facet of $P_G$ is of the form $P_{G_{I\cup J}}$ for some $I\subset V_1$ and $J\subset V_2$, or $P_{G_{I_1\cup J_1}}\times P_{G_{I_2\cup J_2}}$ for some $I_1,I_2\subset V_1$ and $J_1,J_2\subset V_2$.
In each case, the graphs $G_{I\cup J},G_{I_1\cup J_1}$ and $G_{I_2\cup J_2}$ are all connected.
Note that a face of the product $P_{G_{I_1\cup J_1}}\times P_{G_{I_2\cup J_2}}$ is a product of faces of $P_{G_{I_1\cup J_1}}$ and $P_{G_{I_2\cup J_2}}$.
In the next two results, we characterize the facet structure of an edge polytope. In particular, the following lemma is an immediate consequence of consecutively considering facets of a facet of $P_G$ and applying Theorem~\ref{thm:HibiOhsugi}.
\begin{lemma}\label{lem:3.3}
    For any face $Q$ of $P_G$, there exist disjoint subsets $I_1,\dots,I_\ell\subset V_1$ and $J_1,\dots,J_\ell\subset V_2$ such that
    \begin{enumerate}
    \item $G_{I_i\cup J_i}$ is connected for any $i=1,\dots,\ell$    
    \item $Q\,=\,P_{G_{I_1\cup J_1}}\times\cdots\times P_{G_{I_\ell\cup J_\ell}}$
    \end{enumerate} 
\end{lemma}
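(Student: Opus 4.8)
The plan is to prove Lemma~\ref{lem:3.3} by induction on the codimension of the face $Q$ inside $P_G$, using Theorem~\ref{thm:HibiOhsugi} as the base step and the observation that a face of a product of polytopes is a product of faces of the factors. The base case, $\operatorname{codim} Q = 0$, is trivial ($\ell = 1$, $I_1 = V_1$, $J_1 = V_2$), and $\operatorname{codim} Q = 1$ is exactly Theorem~\ref{thm:HibiOhsugi}: a facet of type (a) gives $\ell = 1$ with $I_1 = V_1\setminus\{i_0\}$, $J_1 = V_2$ (if $i_0 \in V_1$; symmetrically for $i_0 \in V_2$), while a facet of type (b) associated to an acceptable $T \subset V_1$ gives $\ell = 2$ with $I_1 \cup J_1 = T \cup N(G;T)$ and $I_2 \cup J_2 = V(G)\setminus(T\cup N(G;T))$, both inducing connected subgraphs by the definition of acceptable.

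For the inductive step, suppose $Q$ is a face of $P_G$ with $\operatorname{codim} Q = c \ge 2$. Then $Q$ is a face of some facet $Q'$ of $P_G$ with $\operatorname{codim}_{Q'} Q = c - 1$. By Theorem~\ref{thm:HibiOhsugi}, $Q' = P_{G_{I_1'\cup J_1'}} \times \cdots \times P_{G_{I_m'\cup J_m'}}$ with $m \in \{1,2\}$, each $G_{I_a'\cup J_a'}$ connected, and the $I_a' \subset V_1$, $J_a' \subset V_2$ pairwise disjoint. Since $Q$ is a face of this product, $Q = Q_1 \times \cdots \times Q_m$ where each $Q_a$ is a face of $P_{G_{I_a'\cup J_a'}}$. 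Now each $G_{I_a'\cup J_a'}$ is itself a connected bipartite graph with left/right vertex sets $I_a' \subset V_1$, $J_a' \subset V_2$, so the inductive hypothesis applies to the face $Q_a$ of $P_{G_{I_a'\cup J_a'}}$: it yields disjoint subsets of $I_a'$ and of $J_a'$ realizing $Q_a$ as a product of edge polytopes of connected induced subgraphs. Concatenating these decompositions over $a = 1,\dots,m$ gives the required decomposition of $Q$; disjointness across the whole family follows from the disjointness of the $I_a' \cup J_a'$ and the disjointness within each block supplied by the inductive hypothesis. The one subtlety to record carefully is that the induced-subgraph operation composes correctly, i.e.\ $(G_S)_{S'} = G_{S'}$ for $S' \subseteq S$, so that ``a connected induced subgraph of $G_{I_a'\cup J_a'}$'' is literally ``a connected induced subgraph of $G$''; this is immediate from the definition of $G_T$ but should be stated.

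The main obstacle, such as it is, is bookkeeping rather than mathematics: one must verify that applying Theorem~\ref{thm:HibiOhsugi} recursively to the factors does not produce overlapping vertex sets and that at each stage the relevant induced subgraph is genuinely connected — both of which are guaranteed termwise (the former by pairwise disjointness of the blocks produced at each step, the latter by the connectivity clauses in parts (a) and (b) of Theorem~\ref{thm:HibiOhsugi} and in the inductive hypothesis). Since the codimension strictly decreases and is bounded below by $0$, the recursion terminates, and a face of any polytope is a face of some facet (for $c \ge 1$), so the induction is well-founded. It is worth noting that the decomposition need not be unique, and the lemma only asserts existence; no minimality or canonicity is claimed, which keeps the argument clean.
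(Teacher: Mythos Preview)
Your proposal is correct and is precisely the argument the paper has in mind: the paper simply states that the lemma ``is an immediate consequence of consecutively considering facets of a facet of $P_G$ and applying Theorem~\ref{thm:HibiOhsugi}'', and your induction on codimension, together with the fact that a face of a product is a product of faces, spells this out in full. The only extra care you take beyond the paper's sketch is the bookkeeping about disjointness and the transitivity $(G_S)_{S'}=G_{S'}$, both of which are exactly right.
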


\noindent Conversely, the next lemma shows that any pair $(I,J)\in V_1\times V_2$ gives rise to a face of~$P_G$.
We denote $Q_{I,J}$ the convex hull of the set $\{ a_{ij}\mid ij\in E(G_{I\cup J})\}$.
\begin{lemma}
    Let $\varnothing\neq I\subset V_1$ and $\varnothing\neq J\subset V_2$.
    Then, $Q_{I,J}$ is a face of the polytope $P_G$.
\end{lemma}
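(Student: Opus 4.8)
The statement to prove is: for nonempty $I \subset V_1$ and nonempty $J \subset V_2$, the set $Q_{I,J} = \operatorname{Conv}\{a_{ij} \mid ij \in E(G_{I\cup J})\}$ is a face of $P_G$. The strategy is to exhibit an explicit supporting linear functional $\phi : \mathbb{R}^{n+1} \to \mathbb{R}$ that is maximized on $P_G$ exactly along $Q_{I,J}$. Recall $P_G = \operatorname{Conv}\{a_{ij} = e_i + e_j \mid ij \in E(G)\}$, and that all vertices $a_{ij}$ lie in the affine hyperplane where the sum of all coordinates equals $2$ (each $a_{ij}$ has exactly two ones, assuming $i \neq j$, which holds since $G$ is bipartite with parts $V_1, V_2$).

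First I would define $\phi$ by choosing weights on the vertices of $G$: set $\phi = \sum_{v \in V(G)} c_v x_v$ where $c_v = 0$ for $v \in I \cup J$ and $c_v = -1$ (or any strictly negative value) for $v \notin I \cup J$. Then for an edge $ij \in E(G)$ with $i \in V_1$, $j \in V_2$, we get $\phi(a_{ij}) = c_i + c_j$, which equals $0$ if both endpoints lie in $I \cup J$, i.e. if $ij \in E(G_{I\cup J})$, and is strictly negative otherwise (at least one endpoint outside $I \cup J$). Hence $\max_{P_G} \phi = 0$, and the face on which it is attained is the convex hull of exactly those $a_{ij}$ with $ij \in E(G_{I\cup J})$, which is precisely $Q_{I,J}$. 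This shows $Q_{I,J}$ is a face.

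The one subtlety to address carefully is that a "face" in this paper is defined via maximizing a linear functional, and I should make sure $Q_{I,J}$ is nonempty and that the induced-subgraph $G_{I\cup J}$ genuinely has at least one edge — otherwise $Q_{I,J}$ would be empty and "face" would be vacuous or one should note the empty face is allowed. Here one should remark that if $G_{I\cup J}$ has no edges then $Q_{I,J} = \varnothing$, which is trivially a face; and if it has edges the argument above applies verbatim. I would also note that since $P_G$ itself is the convex hull of the $a_{ij}$, evaluating $\phi$ on the vertices suffices to determine its maximum and the maximizing face — no appeal to Theorem~\ref{thm:HibiOhsugi} or Lemma~\ref{lem:3.3} is needed, making this lemma genuinely a converse that stands on its own.

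I do not expect a serious obstacle here: the only thing to be attentive to is the bookkeeping of which coordinate indices appear (using the convention $e_0 = (1,0,\dots,0)$ and that $V_1, V_2$ partition $V(G)$, so every edge has one endpoint in each part and $a_{ij}$ always has two distinct nonzero coordinates), and confirming that the weight function construction does indeed single out $E(G_{I\cup J})$ and nothing more. So the proof is essentially a two-line construction of the supporting functional plus a sentence verifying it does the job.
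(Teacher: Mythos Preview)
Your proposal is correct and is essentially the same argument as the paper's: the paper exhibits the supporting functional $\phi_{I,J}=\sum_{i\in I}\phi_i+\sum_{j\in J}\phi_j$ (weights $1$ on $I\cup J$, $0$ elsewhere), which differs from your choice (weights $0$ on $I\cup J$, $-1$ elsewhere) only by subtracting the constant functional $\sum_v x_v$, constant on $P_G$. Both functionals attain their maximum exactly on the vertices $a_{ij}$ with $ij\in E(G_{I\cup J})$, and the paper likewise begins by disposing of the empty case.
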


\begin{proof}
We may assume that $\{ a_{ij}\mid ij\in E(G_{I\cup J})\}$ is non-empty.
    Let $\phi_i:\mathbb{R}^{n+1}\to\mathbb{R}$ be the $i$-th coordinate projection.
    Then, it is easily seen that the linear functional that maximize the face $Q_{I,J}$ is given by $\phi_{I,J}:=\sum\limits_{i\in I}\phi_i+\sum\limits_{j\in J}\phi_j$ .
\end{proof}

\begin{example}[$n=6, k=2$]\label{ex:artificial_pt1}
Let $G$ be the bipartite graph  with $V(G) = \{0,1,\dots,6\}$, and $E(G)=\{03,04,13,15,16,24,25,26\}$. This graph will serve as running example for the results in the rest of the paper. 
\begin{figure}[H]
\vspace{-0.5cm}
\centering
\begin{subfigure}[T]{0.40\linewidth}
\vspace{1.3cm}
\includegraphics[]{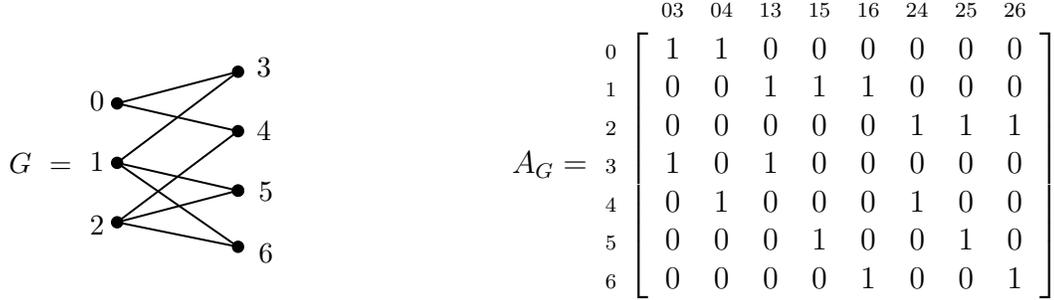}
\end{subfigure}
\begin{subfigure}[T]{0.40\linewidth}
$$
  A_G=
  \kbordermatrix{
      &03&04&13&15&16&24&25&26\\
     0&1 &1 & 0 & 0 & 0 & 0 & 0 &0\\
     1& 0 & 0 &1 &1 &1 & 0 & 0 &0\\
     2& 0 & 0 & 0 & 0 & 0 &1 &1 &1\\
     3&1 & 0 &1 & 0 & 0 &  0& 0 &0\\
     4& 0 &1 & 0 & 0 & 0 &1 & 0 &0\\
     5& 0 & 0 &  0&1 & 0 & 0 &1 &0\\
     6& 0 & 0 & 0 & 0 &1 & 0 & 0 &1
  }
  $$
\end{subfigure}
\caption{The bipartite graph $G$ and the matrix whose columns give the set $A_G$ from \eqref{eq:A_G}.}
\label{fig:graph_AG_artificial}
\end{figure} 
The edge polytope $P_G\subset\RR^6$ is the $5$-dimensional polytope obtained as the convex hull of the columns of the matrix $A_G$ in Figure~\ref{fig:graph_AG_artificial}. 
We computed using \texttt{Oscar.jl} \cite{OSCAR} that the $f$-vector is $(8, 26, 41, 31, 10)$ and the normalized volume $\text{vol}(P_G)$ equals $5$.
One can check that a facet of such a polytope is given by $P_{G_{I_1\cup J_1}}$ for $I_1 = \{0,1,2\}$ and $J_1=\{3,4,6\}$, while one of the codimension-2 faces is  $P_{G_{I_2\cup J_2}}$ for $I_2 = \{0,1,2\}$ and $J_2=\{3,4\}$.
\end{example}

\subsection{Principal A-determinant of a sparse arrangement}

In what follows, we will use the previous results on the face structure of an edge polytope $P_G$ to derive an explicit formula for the corresponding principal $A$-determinant, namely the one for the polynomial $f_G(\var;z)$ defined in Remark~\ref{rmk:PG_NewtonPolytope}.

A subset $m$ of $E(G)$ is called a {\it matching} if no pair of edges in $m$ is adjacent, i.e., they share a common vertex. We say that $m$ is a {\it perfect matching}, if it covers the whole set $V(G)$. While, we say that $m$ {\it saturates} $V_1$ if it covers the whole $V_1$.
The following characterization of the existence of a perfect matching is due to Hall (\cite[Theorem 5.2]{BM}):
\begin{theorem}[\cite{BM}]\label{thm:Hall}
    Suppose that $|V_1|\leq |V_2|$.
    Then, the graph $G$ has a matching which saturates $V_1$ if and only if, for any subset $W\subset V_1$, it holds that $|W|\leq |N(G;W)|$.
  \end{theorem}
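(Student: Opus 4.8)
The statement is Hall's marriage theorem, and the plan is the classical induction on $|V_1|$. The necessity of the condition is immediate: if a matching $m$ saturates $V_1$, then for each $W\subset V_1$ the map sending a vertex of $W$ to its $m$-partner is an injection $W\hookrightarrow N(G;W)$, forcing $|W|\le|N(G;W)|$. For sufficiency, assume $|W|\le|N(G;W)|$ for every $W\subset V_1$ and induct on $|V_1|$; the base case $|V_1|=1$ holds since $|N(G;\{i\})|\ge1$ by the standing assumption that every vertex lies on an edge. In the inductive step I would split according to whether the Hall inequality is strict on all proper subsets.

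\textbf{Case 1: $|N(G;W)|\ge|W|+1$ for every nonempty $W\subsetneq V_1$.} Pick a left vertex $i_0$ and a neighbor $j_0\in N(G;\{i_0\})$, and let $G'$ be the subgraph of $G$ induced on $(V_1\setminus\{i_0\})\cup(V_2\setminus\{j_0\})$. For any $W\subset V_1\setminus\{i_0\}$ one has $N(G';W)=N(G;W)\setminus\{j_0\}$, hence $|N(G';W)|\ge|N(G;W)|-1\ge|W|$ (using the strict inequality when $W\ne\varnothing$). Thus $G'$ satisfies the Hall condition and, having one fewer left vertex, admits by induction a matching $m'$ saturating $V_1\setminus\{i_0\}$; then $m'\cup\{i_0j_0\}$ saturates $V_1$.

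\textbf{Case 2: there is a nonempty $W_0\subsetneq V_1$ with $|N(G;W_0)|=|W_0|$.} The induced subgraph $G_{W_0\cup N(G;W_0)}$ inherits the Hall condition and has fewer left vertices, so by induction it has a matching $m_1$ saturating $W_0$. Let $G'':=G_{(V_1\setminus W_0)\cup(V_2\setminus N(G;W_0))}$. For $W\subset V_1\setminus W_0$ the crucial identity is $N(G'';W)=N(G;W_0\cup W)\setminus N(G;W_0)$, whence $|N(G'';W)|=|N(G;W_0\cup W)|-|W_0|\ge|W_0\cup W|-|W_0|=|W|$, so $G''$ too satisfies the Hall condition and yields by induction a matching $m_2$ saturating $V_1\setminus W_0$. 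Since $m_1$ and $m_2$ involve disjoint vertex sets, $m_1\cup m_2$ is a matching saturating $V_1$.

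The only step needing genuine care is the bookkeeping in Case 2: one must not assume there are no edges from $V_1\setminus W_0$ into $N(G;W_0)$, but rather verify the set identity $N(G'';W)=N(G;W_0\cup W)\setminus N(G;W_0)$, which is what makes the two sub-instances independent and lets their matchings be combined freely. Beyond this there is no real obstacle; the result is entirely classical, and one could equally well deduce it from König's minimax theorem or run an augmenting-path argument, but the two-case induction above is the shortest self-contained route. (The hypothesis $|V_1|\le|V_2|$ is not needed for the argument: it is forced by the Hall condition via $W=V_1$, and is stated only for orientation.)
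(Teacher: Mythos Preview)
Your proof is correct and is the classical two-case induction that one finds in standard graph theory texts. The paper itself does not prove this statement: it is quoted as \cite[Theorem~5.2]{BM} (Bondy--Murty) and used as a black box, so there is no ``paper's own proof'' to compare against. Your argument is essentially the textbook proof from that reference, so nothing further needs to be said.
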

\noindent In view of Theorem \ref{thm:Hall}, let us assume $|V_1|=|V_2|$. We define the following condition on~$G$:
\begin{itemize}
\item[$(*)$] $|V_1|=1$ and $G$ is connected, or $|V_1|>1$ and for any non-empty, proper subset $W$ of $V_1$, the inequality $|W|<|N(G;W)|$ holds.
\end{itemize}
\noindent
Note that, there is a clear relation between the hypotesis on the matching on the graph and the determinant of the Edmonds matrix $z_G$. 
In fact, it is readily seen that $G$ has a perfect matching if and only if the determinant $\det (z_{G})$ is not identically zero. Furthermore, condition $(*)$ corresponds to the irreducibility of the polynomial $\det(z_G)$:

\begin{proposition}\label{prop:irreducibility}
    Let $G$ be a bipartite graph with $|V_1|=|V_2|$.
    Then, $\det (z_G)$ is a non-zero irreducible polynomial in the variables $z_{ij}$ over any field if and only if $G$ satisfies \hbox{condition $(*)$.}  
\end{proposition}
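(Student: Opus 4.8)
The plan is to prove both implications by induction on $|V_1|$, exploiting the bijection between monomials of $\det(z_G)$ and perfect matchings of $G$. First I would dispose of the "only if" direction: if $G$ fails $(*)$ because it has no perfect matching, then $\det(z_G)\equiv 0$ by the matching–monomial correspondence (a signed sum over perfect matchings); if $G$ has a perfect matching but condition $(*)$ fails, there is a proper non-empty $W\subsetneq V_1$ with $|W|=|N(G;W)|=:r$. I would argue that then every perfect matching of $G$ restricts to a perfect matching of $G_{W\cup N(G;W)}$ and of $G_{(V_1\setminus W)\cup(V_2\setminus N(G;W))}$; consequently, after a suitable reordering of rows and columns, $z_G$ is block lower-triangular with the off-diagonal block forced to be zero on the relevant entries, giving $\det(z_G) = \pm\det(z_{G_{W\cup N(G;W)}})\cdot\det(z_{G_{(V_1\setminus W)\cup(V_2\setminus N(G;W))}})$ as a genuine factorization into two non-constant polynomials in disjoint variable sets, hence reducibility.

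For the "if" direction, assume $(*)$ holds; the base case $|V_1|=1$ is immediate since then $\det(z_G)=\sum_{0j\in E(G)}\pm z_{0j}$ is linear and, $G$ being connected, involves at least one variable, hence is irreducible. For $|V_1|=m>1$, suppose for contradiction $\det(z_G)=P\cdot Q$ with $P,Q$ non-units. Each variable $z_{ij}$ appears in $\det(z_G)$ with degree at most $1$, so the variables partition into those appearing in $P$ and those appearing in $Q$; I would show this partition of $E(G)$ is incompatible with condition $(*)$. The cleanest route: pick a variable $z_{i_0 j_0}$ appearing in $P$. By the expansion along row $i_0$, for any edge $i_0 j$ the polynomial $\det(z_G)$ contains the term $\pm z_{i_0 j}\cdot\det(z_{G'})$ where $G'=G_{(V_1\setminus\{i_0\})\cup(V_2\setminus\{j\})}$; condition $(*)$ guarantees via Hall's theorem (Theorem~\ref{thm:Hall}) that $G'$ still has a perfect matching, so $\det(z_{G'})\not\equiv 0$. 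This forces all variables $z_{i_0 j}$ adjacent to $i_0$ to lie in the same factor, say $P$; then propagating connectivity — using that $(*)$ for $G$ passes to an appropriate condition on the sub-arrangements obtained by deleting a matched pair — one concludes every variable lies in $P$, so $Q$ is a unit. The propagation step is where I would invoke the inductive hypothesis, after verifying that deleting a vertex $i_0$ and one of its neighbors $j$ yields a graph still satisfying (a relative form of) $(*)$, or at least that the union over all choices of $j$ of these subgraphs is connected enough to force all edges into $P$.

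Making the connectivity/propagation argument precise is the main obstacle. The subtlety is that after deleting a matched edge $\{i_0,j\}$ the smaller graph need not itself satisfy $(*)$ on the nose — the strict inequality can degrade to equality — so I cannot blindly recurse. The fix I anticipate is to work not with a single deletion but with the whole family $\{G^{(j)}\}$ of graphs obtained by deleting $i_0$ together with each neighbor $j\in N(G;\{i_0\})$, show that the "variable-incidence graph" linking these together is connected (this is precisely where $|W|<|N(G;W)|$ for all proper $W$ is used — it prevents the matchings from splitting into independent blocks), and thereby propagate membership in the factor $P$ across all of $E(G)$. An alternative, possibly slicker, route avoiding this bookkeeping is to use irreducibility criteria for the generic determinantal-type variety: the zero locus $\{\det(z_G)=0\}$ is the projection of an incidence variety, and one can try to show this projection is irreducible and generically reduced exactly under $(*)$; but I expect the combinatorial induction above to be the more self-contained argument to write out.
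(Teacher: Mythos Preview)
Your ``only if'' direction matches the paper's proof essentially verbatim: failure of $(*)$ gives either no perfect matching (hence $\det(z_G)\equiv 0$) or a block-triangular form yielding a nontrivial factorization.

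For the ``if'' direction you start exactly as the paper does --- showing that for every edge $ij$ the deleted graph $G'=G_{(V_1\setminus\{i\})\cup(V_2\setminus\{j\})}$ satisfies Hall's condition, so $z_{ij}$ genuinely appears in $\det(z_G)$, and then observing that all variables in a fixed row $i_0$ must lie in the same factor $P$ because $\det(z_G)$ has degree $1$ in the row-$i_0$ variables. But at this point you diverge: you try to propagate by \emph{induction} on $|V_1|$, deleting a matched pair $\{i_0,j\}$ and recursing, and you correctly identify the obstacle that $(*)$ can degrade to equality in the subgraph. Your proposed workaround (considering the whole family $\{G^{(j)}\}_j$) could perhaps be made to work, but it is left vague and is where the real work of your proof would sit.

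The paper sidesteps this entirely with one extra observation: the determinant has degree $1$ not only in each row but also in each \emph{column}. So once $z_{ij}$ is in $P$, every $z_{i'j}$ (same column) and every $z_{ij'}$ (same row) is also in $P$. This means the set of edges whose variable lies in $P$ is closed under sharing a vertex. Since $(*)$ forces $G$ to be connected (a proper component $H$ would give $|V_1\cap V(H)|=|N(G;V_1\cap V(H))|$), all of $E(G)$ lands in $P$ and $Q$ is a scalar. No induction, no worry about $(*)$ degrading --- the connectedness of $G$ itself carries the propagation. Your row-linearity insight was the right one; you just needed to apply it symmetrically to columns as well.
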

  
\begin{proof}
    Suppose that $G$ does not satisfy $(*)$.
    When $|V_1|=1$, it is clear that $\det (z_G)$ is zero since $G$ is disconnected, hence there is no edge.
    Suppose that $|V_1|\geq 2$.
    By our assumption, we can find a non-empty proper subset $W$ of $V_1$ such that $|W|=|N(G;W)|$.
    By the definition of graph neighborhood, the matrix $z_G$ takes the following block-triangular form:
    \begin{equation}\label{eq:block triangulation}
  \begin{matrix}
\includegraphics[]{figures/zG_proof.tikz}
\end{matrix}          
    \end{equation}
    where $M \coloneqq N(G;W)$. It follows that $\det (z_G)$ is either reducible or zero.
    The argument above shows the necessary condition.
  
    Let us prove that the condition $(*)$ implies that $\det( z_G)$ is non-zero and irreducible.
    First of all, let $e=ij\in E(G)$ and write $G^\prime$ for the subgraph of $G$ obtained by removing the vertices on $e$.
    Let $i$ be the vertex on $e$ which lies on $V_1$.
    For any subset $W$ of $V_1\setminus \{ i\}$, we have $|N(G^\prime;W)|\geq |N(G;W)|-1$. Therefore, condition $(*)$ implies that the graph $G^\prime$ verifies the hypothesis of Theorem~\ref{thm:Hall}.
    By cofactor expansion, we obtain that 
    \begin{equation}
        \det (z_G)\,=\,\pm z_e\cdot \det( z_{G^\prime}) +(\text{terms that do not contain }z_{ij}).
    \end{equation}
    Since $\det (z_{G^\prime})\neq 0$, we proved that $z_{ij}$ appears in $\det (z_G)$ for any edge $e=ij$ of $G$.
    Now, $G$ having a perfect matching is equivalent to each connected component of $G$ having a perfect matching.
    If $G$ has a connected component $H$ that is strictly smaller than $G$, we obtain $|N(G;V_1\cap V(H))|=|V_1\cap V(H)|$.
    This contradicts the condition $(*)$.
    Hence, $G$ is connected.
    Let $\det (z_G)=p(z)\cdot q(z)$ be a decomposition.
    Fixing an edge $e=ij$ of $G$ we may assume that $z_{ij}$ appears in $p(z)$.
    By the definition of determinant, any element $z_{i^\prime j}$ $(i^\prime j\in E(G))$ or $z_{ij^\prime}$ $(ij^\prime\in E(G))$ must appear in $p(z)$.
    The connectedness of $G$ proves that any variable $z_{ij}$ $(ij\in E(G))$ belongs to $p(z)$, hence $q$ is a scalar.
  \end{proof}

Before presenting the main theorem of this section, we must first introduce the following lemma, which is established using linear algebra.
\begin{lemma}\label{lem:linear algebra}
    For a connected bipartite graph $G$, one has an identity
    \begin{equation}\label{eq:3.10}
        \mathbb{Z}^{n+1}\cap{\rm Ker}_{\RR}\phi\,=\,\sum_{ij\in E(G)}\ZZ\, \cdot\, a_{ij},
    \end{equation}
    where $\phi:=\phi_0+\cdots+\phi_k-\phi_{k+1}-\cdots-\phi_{n}$ and $a_{ij}$ as in \eqref{eq:A_G}.
\end{lemma}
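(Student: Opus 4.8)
The plan is to prove both inclusions in \eqref{eq:3.10}. The inclusion $\supseteq$ is immediate: each $a_{ij}=e_i+e_j$ with $ij\in E(G)$, $i\in V_1$, $j\in V_2$, satisfies $\phi(a_{ij})=\phi_i(a_{ij})-\phi_j(a_{ij})=1-1=0$, so $a_{ij}\in\mathbb{Z}^{n+1}\cap\mathrm{Ker}_\RR\phi$, and this lattice-point set is closed under $\ZZ$-linear combinations. The content is the reverse inclusion $\subseteq$, i.e.\ every integer vector $v=(v_0,\dots,v_n)$ with $\sum_{i\in V_1}v_i=\sum_{j\in V_2}v_j$ lies in the sublattice $L_G:=\sum_{ij\in E(G)}\ZZ\, a_{ij}$.

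First I would record two basic facts about $L_G$ obtained by taking differences of generators. If $i,i'\in V_1$ are both adjacent to a common $j\in V_2$, then $a_{ij}-a_{i'j}=e_i-e_{i'}\in L_G$; symmetrically for two right vertices adjacent to a common left vertex. Since $G$ is connected, by walking along a path in $G$ one deduces that $e_i-e_{i'}\in L_G$ for \emph{any} $i,i'\in V_1$, and likewise $e_j-e_{j'}\in L_G$ for any $j,j'\in V_2$: each step of the path changes one coordinate in $V_1$ while keeping the $V_2$-endpoint fixed (or vice versa), and the telescoping sum of the corresponding elementary differences lies in $L_G$. Also, fixing any single edge $i_0j_0\in E(G)$, we have $e_{i_0}+e_{j_0}=a_{i_0j_0}\in L_G$.

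Now I would reduce a general $v\in\mathbb{Z}^{n+1}\cap\mathrm{Ker}_\RR\phi$ to zero modulo $L_G$. Pick a fixed edge $i_0 j_0$. Using the elementary differences $e_i-e_{i_0}\in L_G$ ($i\in V_1$) and $e_j-e_{j_0}\in L_G$ ($j\in V_2$), we may subtract from $v$ an element of $L_G$ to arrive at a vector supported only on the two coordinates $i_0,j_0$, namely $\big(\sum_{i\in V_1}v_i\big)e_{i_0}+\big(\sum_{j\in V_2}v_j\big)e_{j_0}$. The kernel condition $\phi(v)=0$ says exactly $\sum_{i\in V_1}v_i=\sum_{j\in V_2}v_j=:N$, so this residual vector is $N(e_{i_0}+e_{j_0})=N\, a_{i_0 j_0}\in L_G$. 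Hence $v\in L_G$, which proves $\subseteq$ and completes the proof.

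I expect the only subtle point to be the connectedness argument showing $e_i-e_{i'}\in L_G$ for all pairs in $V_1$ (and all pairs in $V_2$): one must be careful that a path in the bipartite graph $G$ alternates between $V_1$ and $V_2$, so consecutive same-side vertices on the path are at distance two and share an intermediate vertex, which is exactly what makes the elementary difference a difference of two generators. Everything else is bookkeeping. It may be cleanest to phrase this as: the quotient lattice $\mathbb{Z}^{n+1}/L_G$ is generated by the images of $e_{i_0}$ and $e_{j_0}$ subject to $e_{i_0}+e_{j_0}\equiv 0$, hence is cyclic, and the map $v\mapsto\sum_{i\in V_1}v_i$ (well-defined mod the torsion-free rank-one piece) identifies it, giving the kernel description at once.
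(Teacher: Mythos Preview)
Your proof is correct and follows essentially the same strategy as the paper's: use connectedness of $G$ to produce enough auxiliary elements of $L_G$ to span the whole lattice $\mathbb{Z}^{n+1}\cap\mathrm{Ker}_\RR\phi$. The only cosmetic difference is that the paper shows directly that $e_i+e_j\in L_G$ for \emph{every} pair $(i,j)\in V_1\times V_2$ via an alternating telescoping sum along a path, whereas you show the differences $e_i-e_{i'}$ (for $i,i'\in V_1$) and $e_j-e_{j'}$ (for $j,j'\in V_2$) lie in $L_G$ and then reduce to a single fixed edge; the two sets of auxiliary elements are immediately interconvertible.
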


\begin{proof}
    By definition, the right-hand side of \eqref{eq:3.10} is contained in the left-hand side.
    On the other hand, for any $0\leq i\leq k$ and $k+1\leq j\leq n$, a vector $e_i+e_j$ belongs to the right-hand side of \eqref{eq:3.10}.
    In fact, there is a sequence $i_1j_1, i_2j_1, i_2j_2,\dots,i_\ell j_\ell\in E(G)$ with $i_1=i$ and $j_\ell=j$ since $G$ is connected.
    It follows that $e_i+e_j=a_{i_1j_1}-a_{i_2j_1}+a_{i_2j_2}-\dots+a_{i_\ell j_\ell}$.
    Since any element of the right-hand side of \eqref{eq:3.10} can be written as a $\mathbb{Z}$-linear combination of vectors $e_i+e_j$, the lemma follows.
\end{proof}

\noindent We can finally state the main result of this section.

\begin{theorem}\label{thm:PAD}
Let $G$ be a connected bipartite graph. Then, one has a formula
    \begin{equation}\label{eq:product of determinants}
    E_{A_G}(z)\,=\,\prod_{\substack{I,J:
    |I|=|J|,\\
    \text{ $G_{I\cup J}$ is connected and }(*)}} \det (z_{I,J})^{u(S(A_G)/Q_{I,J})},
    \end{equation}
where the product runs over all non-empty subsets $I\subset V_1$ and $J\subset V_2$, and $z_{I,J}$ denotes the submatrix of $z_G$ obtained by selecting the rows in $I$ and the columns in $J$.
\end{theorem}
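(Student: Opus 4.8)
The plan is to start from the GKZ product formula \eqref{eq:PAD} for the principal $A$-determinant $E_{A_G}$, and to compute each ingredient using the combinatorial results about the face structure of the edge polytope $P_G$ established earlier in this section. By Lemma~\ref{lem:3.3}, every face $Q$ of $P_G$ decomposes as a product $P_{G_{I_1\cup J_1}}\times\cdots\times P_{G_{I_\ell\cup J_\ell}}$ with each $G_{I_i\cup J_i}$ connected, and conversely every such product is a face of $P_G$. So the outer product in \eqref{eq:PAD} is over such tuples of disjoint pairs; the first task is to show that only the tuples with a single block $(I,J)$ and with $|I|=|J|$ can possibly contribute a nontrivial $A$-discriminant factor.

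The key step is the identification of $\Delta_{A_G\cap Q}$ for a face $Q$. Since $A_G\cap Q$ is the monomial support $\{a_{ij}\mid ij\in E(G_{I\cup J})\}$ of the bilinear polynomial $\sum_{ij\in E(G_{I\cup J})} z_{ij}\var_i\var_j$ — the "generic bilinear form" attached to the bipartite graph $G_{I\cup J}$ — the relevant fact is that the $A$-discriminant of such a support is nontrivial exactly when $G_{I\cup J}$ has $|I|=|J|$ and the vanishing of its discriminant is governed by $\det(z_{I,J})$. Concretely: the Cayley-type polynomial $f_Q$ has a singular point in the torus iff the bilinear form it defines is degenerate, which (for $|I|=|J|$) happens on $\{\det(z_{I,J})=0\}$ and is irreducible there precisely under condition $(*)$ by Proposition~\ref{prop:irreducibility}; this is where I would invoke the classical fact (GKZ, Chapter~9, \S1, as already cited in \eqref{eq:EA_dense}) that the $A$-discriminant of a full generic matrix configuration is a maximal minor, together with the observation that a disconnected or unbalanced $G_{I\cup J}$ forces $A_G\cap Q$ to be defective (so $\Delta_{A_G\cap Q}=1$) — the product structure of such faces makes their discriminant variety non-hypersurface. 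This reduces the product \eqref{eq:PAD} to the one claimed in \eqref{eq:product of determinants}, with the index set exactly $\{(I,J): |I|=|J|,\ G_{I\cup J}\text{ connected and }(*)\}$.

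It then remains to compute the multiplicity $m_Q = i(Q,A_G)\cdot u(S(A_G)/Q)$ and show $i(Q,A_G)=1$ for each contributing face $Q=Q_{I,J}$, so that $m_Q = u(S(A_G)/Q_{I,J})$ as in the statement. Here Lemma~\ref{lem:linear algebra} does the work: applied to the connected bipartite graph $G_{I\cup J}$, it gives $\mathbb{Z}^{I\cup J}\cap \operatorname{Ker}_\RR\phi_{I,J} = \sum_{ij\in E(G_{I\cup J})}\ZZ\cdot a_{ij} = \operatorname{Lin}_\ZZ(A_G\cap Q_{I,J})$, which is precisely the statement that the index $i(Q_{I,J},A_G) = [\ZZ^{n+1}\cap\operatorname{Lin}_\RR(Q_{I,J}) : \operatorname{Lin}_\ZZ(A_G\cap Q_{I,J})]$ equals $1$ (the ambient lattice intersected with the linear span of the face is saturated and coincides with the lattice generated by the $a_{ij}$). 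The subdiagram-volume factor $u(S(A_G)/Q_{I,J})$ is then left as-is in the statement (and computed separately in Subsection~\ref{sec:subdiagram_volume}).

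The main obstacle I anticipate is the discriminant-identification step: rigorously showing that for a face $Q=Q_{I,J}$ with $G_{I\cup J}$ connected and $|I|=|J|$, one has $\Delta_{A_G\cap Q} = \det(z_{I,J})$ (up to scalar), and that all other faces are defective. The "$|I|\ne|J|$ or disconnected $\Rightarrow$ defective" direction should follow from a dimension count: the singular locus of the bilinear form then has positive-dimensional fibers over the coefficient space, or the configuration $A_G\cap Q$ is a pyramid / product so that $\nabla_{A_G\cap Q}$ is not a hypersurface (GKZ's criteria for defectivity of Cayley/product configurations apply). The "$=\det(z_{I,J})$" direction is essentially the content of \eqref{eq:EA_dense} restricted to the subgraph, combined with Proposition~\ref{prop:irreducibility} to guarantee the minor is irreducible and reduced. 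One must be slightly careful that the sparsity pattern (zeros outside $E(G)$) does not introduce extra components — but condition $(*)$ on $G_{I\cup J}$ is exactly what rules this out, so the logical flow closes.
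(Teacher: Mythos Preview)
Your overall strategy matches the paper's proof exactly: start from the GKZ product \eqref{eq:PAD}, use Lemma~\ref{lem:3.3} to parametrize faces by collections of connected induced subgraphs, show that only single connected blocks $G_{I\cup J}$ with $|I|=|J|$ and condition~$(*)$ can be non-defective with $\Delta_{A_G\cap Q}=\det(z_{I,J})$, and then invoke Lemma~\ref{lem:linear algebra} to get $i(Q,A_G)=1$.

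The one place where your sketch diverges from what actually works is the defectivity argument for a \emph{connected} $G_{I\cup J}$ with $|I|\neq|J|$. Such a face is neither a pyramid nor a product, so the GKZ defectivity criteria you gesture at do not apply directly; a dimension count alone is also not enough, since the incidence variety can still project to a hypersurface. The paper handles this case (its case~(ii)) by a hands-on matching argument: Hall's theorem produces a matching saturating~$I$, and connectedness lets one swap one edge to get a \emph{second} saturating matching, yielding two distinct nonzero minors $\det(z_{I,J_1}),\det(z_{I,J_2})$ that must both vanish on $\nabla_{A_G\cap Q}$, forcing codimension $\geq 2$. Similarly, when $|I|=|J|$ but $(*)$ fails, the paper uses the block-triangular form \eqref{eq:block triangulation} to show that left- and right-kernel conditions force two \emph{independent} minors to vanish, so again $\Delta=1$; your proposal conflates this with Proposition~\ref{prop:irreducibility}, which only says $\det(z_{I,J})$ is reducible or zero --- that alone does not yet imply the discriminant variety has higher codimension. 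Finally, your appeal to \eqref{eq:EA_dense}/GKZ Chapter~9 for the positive direction is about the \emph{dense} bilinear configuration; in the sparse case the paper instead checks directly, using that under~$(*)$ every cofactor $\Delta_{ij}$ is a nonzero polynomial (again via Hall), that the generic point of $\{\det(z_{I,J})=0\}$ lies in the discriminant variety.
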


\begin{proof}
To simplify the notation, we write $A$ for $A_G$ below.
Let $Q$ be a face of $P_G$ and let $H$ be the corresponding subgraph of $G$, i.e., $P_H = Q$ as in Lemma \ref{lem:3.3}. 
We denote $f = f_G(\var;z)$ as in \eqref{eq:f_G}. We divide the proof into three main cases.

\begin{enumerate}
\item[(i)]\underline{The subgraph $H$ has several connected components:}
let $H_1=G_{I_1\cup J_1},\dots, H_\ell=G_{I_\ell\cup J_\ell}$ be its connected components $(\ell\geq 2)$.
The face polynomial, defined in \eqref{eq:face_poly}, is given by $f_Q=\sum_{a=1}^\ell\left( \sum_{i\in I_a,j\in J_a}z_{ij}\var_i\var_j\right)$.
Let us denote $\var_{I_a} = (\var_i)_{i\in I_a}$ and $\var_{J_a} = (\var_i)_{i\in J_a}$ for $a=1,\dots,\ell$. Then, the vanishing locus of $(A\cap Q)$-discriminant is given by the Zariski closure of the set:
\[
\nabla_{\!A\cap Q}=\bigcap_{a=1}^\ell\bigl\{ z\mid \exists (\var_{I_a},\var_{J_a}) \in (\CC^*)^{I_a}\times(\CC^*)^{J_a} \,\,: \,\,  \var_{I_a} \cdot z_{I_a,J_a}=   z_{I_a,J_a}\cdot \var_{J_a}^T=0\bigr\} .
\]
Therefore, by definition, $\nabla_{\!A\cap Q}$ is the union of proper varieties.
It follows that the variety $\overline{\nabla_{\!A\cap Q}}$ has codimension higher than one.
Thus, the $(A\cap Q)$-discriminant is $1$.

\item[(ii)]\underline{The subgraph $H$ is connected and of the form $H=G_{I\cup J}$ with $|I|<|J|$:}
\hbox{we distinguish two sub-cases.}

\underline{(ii-a) We assume there exists a matching $m$ which saturates $I$:} let us take a generic point $z$ from the $(A\cap Q)$-discriminant variety.
By definition, there exists a row vector $\var_I \in(\mathbb{C}^*)^I$ such that $\var_I \cdot z_{I,J}=0$, which means that the set of row vectors of $z_{I,J}$ is linearly dependent.
Therefore, for any subset $J_1\subset J$ with $|J_1|=|I|$, the minor $\det( z_{I,J_1})$ must vanish at $z$.
Let $J_1\subset J$ be the subset consisting of $j\in J$ that appear in $m$, then
$\det (z_{I,J_1})$ is not a zero polynomial.
On the other hand, there must exist a vertex $i_1\in I$ which is connected to $j_2\in J\setminus J_1$ since $G_{I\cup J}$ is connected.
We write $j_1\in J_1$ for the vertex paired with $i_1$ in $m$.
We define the new matching $m':=(m\setminus\{ (i_1,j_1)\})\cup\{ (i_1,j_2)\}$ and set $J_2:=(J_1\setminus \{ j_1\})\cup\{ j_2\}$.
We obtain that $\det (z_{I,J_2})$ is not a zero polynomial.
Summing up the argument above, the $(A\cap Q)$-discriminant variety is contained in the intersection $\{\det (z_{I,J_1})=0\}\cap\{\det (z_{I,J_2})=0\}$.
We conclude that the $(A\cap Q)$-discriminant variety has codimension higher than one.

\underline{(ii-b) We assume that there there is no matching $m$ that saturates $I$:}
by definition of the $(A\cap Q)$-discriminant, there exists a vector $\var_I \in(\mathbb{C}^*)^I$ such that $\var_I \cdot z_{I,J}=0$.
Theorem~\ref{thm:Hall} and the connectedness of $G_{I\cup J}$ imply that there exists a non-empty proper subset $\varnothing\neq W\subsetneq I$ such that $|N(G_{I\cup J};W)|<|W|$.
It follows that 
$$\var_{(I\setminus W)}\cdot z_{I\setminus W,J\setminus N(G_{I\cup J};W)}=0.$$
Since $|I\setminus W|<|J\setminus N(G_{I\cup J};W)|$, there exist subsets $I'\subset I\setminus W$ and $J'\subset J\setminus N(G_{I\cup J};W)$ such that $|I'|<|J'|$ and $G_{I'\cup J'}$ is a connected component of $G_{(I\setminus W)\cup(J\setminus N(G_{I\cup J};W))}$.
It follows by construction that $\var_{(I')}\cdot z_{I',J'}=0$.
If there exists a matching $m$ of $G_{I'\cup J'}$ which saturates $I'$, the same argument as (ii-a) shows that the $(A\cap Q)$-discriminant has codimension higher than one.
If there is no such matching $m$, 
we repeat the argument above to find  smaller subsets $I'',J''$ such that $|I''|<|J''|$ and $G_{I''\cup J''}$ is connected.
Repeating this argument eventually yields the case (ii-a).

\item[(iii)]\underline{The subgraph $H$ is connected and of the form $H=G_{I\cup J}$ with $|I|=|J|$:}
\hbox{we distinguish two sub-cases.}

\underline{(iii-a) The graph $H = G_{I\cup J}$ does not satisfy $(*)$:}
in view of the proof of Proposition \ref{prop:irreducibility}, $ z_{I,J}$ is decomposed into a block triangular form as in \eqref{eq:block triangulation}.
If $z$ is a generic point of the $(A\cap Q)$-discriminant variety, there exists $\var_I\in(\mathbb{C}^*)^{|I|}$ and $\var_J\in(\mathbb{C}^*)^{|J|}$ such that $\var_I \cdot z_{I,J}=0$ and $z_{I,J}\cdot \var_J^T=0$.
Let $z_{I_1,J_1}$ be its upper-left block and let $\det( z_{I_2,J_2})$ be its lower-right block.
Then, we obtain $z_{I_1,J_1}\cdot \var_{J_1}^T=0$ and $\var_{I_2}\cdot z_{I_2,J_2}=0$.
This means that the $F$-discriminant variety is contained in $\{ \det (z_{I_1,J_1})=\det (z_{I_2,J_2})=0\}$.
Thus, the $A\cap Q$-discriminant is $1$.

\underline{(iii-b) The graph $H = G_{I\cup J}$ satisfies $(*)$:}
it is readily seen that the $A\cap Q$-discriminant variety is contained in $\{\det (z_{I,J})=0\}$.
Let us prove the other inclusion.
For any $i\in I$ and $j\in J$,  condition $(*)$, together with Theorem~\ref{thm:Hall}, implies that $G_{I\cup J\setminus\{ i,j\}}$ has a perfect matching.
This means that the $(i,j)$-cofactor $\Delta_{ij}$ of the matrix $z_{I,J}$ is a non-zero polynomial.
It follows from Laplace expansion of matrices that $\{\det (z_{I,J})=0\}\setminus\cup_{i,j}\{\Delta_{ij}=0\}$ is contained in the $A\cap Q$-discriminant variety.
Taking the Zariski closure, we conclude that the $A\cap Q$-discriminant is precisely given by $\det (z_{I,J})$.
\end{enumerate}

Formula \eqref{eq:product of determinants} follows since $i(Q_{I,J},A)=1$ by Lemma~\ref{lem:linear algebra}.
Note that $G_{I\cup J}$ is connected when the condition $(*)$ is satisfied. 
\end{proof}

Note that, in \cite[Proposition 4.4]{galashin2024trianguloids}, the secondary fan, i.e., the normal fan of the principal $A$-determinant, is determined as the normal fan of a polynomial $f$ which is different from \eqref{eq:product of determinants}.
The polynomial $f$ is defined as a product of the form \eqref{eq:product of determinants} where the indices $I,J$ run over pairs that satisfy the condition of Theorem \ref{thm:Hall}.
The reduced polynomial of $f$ is identical to that of \eqref{eq:product of determinants}.

\begin{example}[$n=6, k=2$]\label{ex:artificial_pt2}
Let $G$ be as in Example \ref{ex:artificial_pt1}. Figure~\ref{fig:hyperplanes} shows the matrix $z_G$ and the hyperplane arrangement associated to it.
\begin{figure}[ht]
  \begin{center}
\includegraphics[]{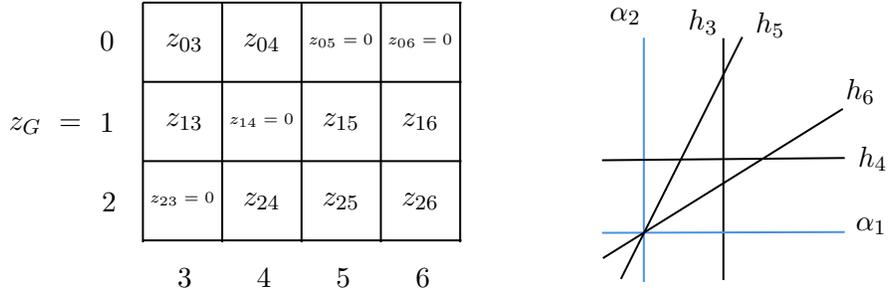}\caption{The matrix $z_G$ with $G$ as in Example~\ref{ex:artificial_pt1} and the induced hyperplane arrangement.}
\label{fig:hyperplanes} 
  \end{center}
  \end{figure}
Using Theorem~\ref{thm:Euler_discr} we compute the factors of the principal $A$-determinant $E_{A_G}(z_G)$:
\begin{equation}\label{eq:PAD_example}
     z_{03}^3 z_{13}^3 z_{04}^3 z_{24}^3 z_{15}^2 z_{25}^2 z_{16}^2 z_{26}^2 (z_{15}z_{26}-z_{16}z_{25})^2(z_{03}z_{24}z_{15} + z_{13}z_{04}z_{25})(z_{03}z_{24}z_{16} + z_{13}z_{04}z_{26}).
\end{equation}
This is a polynomial of degree $30 = 6\cdot 5$, consistently with Proposition~\ref{prop:degree}.  Exponents are computed by the method as in Example \ref{ex:artificial_pt3} in \S\ref{sec:subdiagram_volume}.
Clearly, the factors of the form $z_{ij}$  correspond to vertices of the polytope and therefore to minors determined by subsets $I= \{i\}$ and $J = \{j\}$. The three non-linear factors correspond respectively to the subgraphs displayed in Figure~\ref{fig:three subgraphs}.
\begin{figure}[h!]
  {\centering
\includegraphics[]{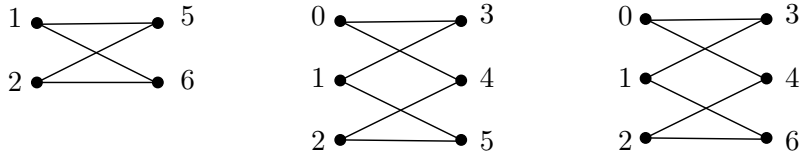}
\caption{Subgraphs $Q_{12,56}, Q_{012,345}, Q_{012,346}$.}\label{fig:three subgraphs}
 }
 \end{figure}
Choosing vertices, for instance, in the sets $I=\{0,1,2\}$ and $J=\{4,5,6\}$ leads to a non-connected graph and therefore to a reducible determinant of the associated Edmonds matrix, e.g.,  $z_{I,J}=z_{04}(z_{15}z_{26}-z_{16}z_{25})$. 
The choice $I=\{0,1\}$ and $J=\{5,6\}$ corresponds to a non-connected diagram which induces an identically vanishing determinant. Finally, the subgraph $G_{I\cup J}$ with $I=\{0,1\}$ and $J=\{5,6\}$ is an example of a connected graph which does not verify $(*)$, leading again to an identically zero determinant. 
\begin{figure}[h]
 {\centering
\includegraphics[]{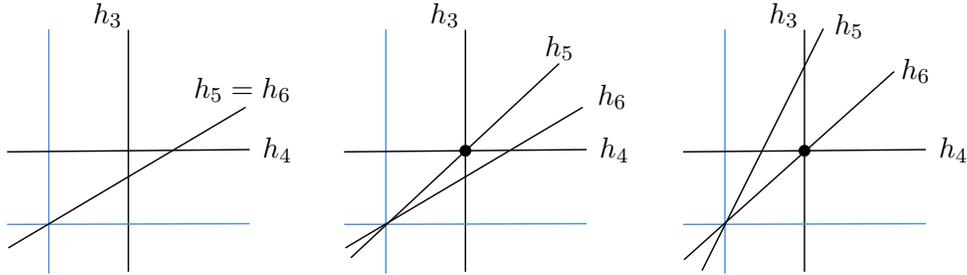}\caption{The degeneration of the generic hyperplane arrangement in Figure~\ref{fig:hyperplanes} to coefficient choices on the subspaces defined by the last three factors of $E_{A_G}(z_G)$ in \eqref{eq:PAD_example}.}\label{fig:hyperplanes_subspaces} }
  \end{figure}

Figure~\ref{fig:hyperplanes} (right) shows the hyperplane arrangement induced by a choice of coefficients $z_{ij}$ in the matrix $z_G$ that do not belong to the locus $\{E_{A_G}(z_G)\} = 0$. In Figure~\ref{fig:hyperplanes_subspaces} instead we show the hyperplane arrangements associated to choices of coefficients which lie in the vanishing locus of the principal $A$-determinant. 
\end{example}

\subsection{Subdiagram volume from bipartite graphs}\label{sec:subdiagram_volume}

Computing the subdiagram volume based on its definition, c.f. \eqref{eq:def_sub_vol}, is often nontrivial and may pose computational challenges. We present a method tailored to the case where the matrix $A$ arises from a bipartite graph $G$, exploiting the graph's combinatorial structure. This approach uncovers intriguing connections with non-homogeneous toric ideals.  

Let $G$ be a bipartite graph. By abuse of notation, we write $A_G:\ZZ^{E(G)}\to{\rm Ker}\phi\subset \ZZ^{n+1}=\ZZ^{V(G)}$ for the linear map which sends $e(ij)$ to $a_{ij}$ defined by \eqref{eq:A_G}, where $e(ij)$ denotes the unit vector corresponding to the edge $ij\in E(G)$, and $\phi$ as in Lemma~\ref{lem:linear algebra}.
The kernel of the map $A_G$ is naturally identified with the first homology group of the graph $G$:
\begin{equation}\label{eq:ses}
0 \longrightarrow H_1(G,\ZZ) \longrightarrow \ZZ^{E(G)} \xrightarrow{A_G}{\rm Ker}\phi \longrightarrow 0. 
\end{equation}
Indeed, a cycle $C:=\{i_1j_1, i_2j_1, i_2j_2, \dots,i_\ell j_\ell,i_1j_\ell\}\subset E(G)$ defines an element of the kernel of $A_G$ by taking the linear combination:
\begin{equation}\label{eq:kernel element}
    e(i_1j_1)-e(i_2j_1)+e(i_2j_2)-\cdots+e(i_\ell j_\ell)-e(i_1j_\ell).
\end{equation}
The homology space $H_1(G,\ZZ)$ is furthermore generated by the independent 
cycles of $G$.
Let $H$ be a connected subgraph of $G$ defined by a non-defective face $Q< P_G$.
Therefore, the subgraph $H$ also admits a short exact sequence as in \eqref{eq:ses}.
The quotient of these short exact sequences determines another short exact sequence:
\begin{equation}\label{eq:ses_complement}
0 \longrightarrow H_1(G,H;\ZZ) \longrightarrow \ZZ^{E(G)\setminus E(H)} \xrightarrow{A_{G/H}} \ZZ^{n+1}/\text{Lin}_{\ZZ} (Q) \longrightarrow 0.    
\end{equation}
The image of $\ZZ_{\geq 0}^{E(G)\setminus E(H)}$ defines the semigroup $S(A)/Q$. Our goal is to identify this short exact sequence with the one representing the homology of a specific graph. To achieve this, we denote $G/H$ the graph obtained from $G$ by contracting the edges in $E(H)$.
The kernel of the map $A_{G/H}$ can be identified with the homology group $H_1(G/H;\ZZ)$ via the identification of homology groups $H_1(G,H;\ZZ)\simeq H_1(G/H;\ZZ)$.
Note that an even cycle of $G/H$ is represented by a set $\{ i_1j_1,i_2j_1,i_2j_2,\dots,i_\ell j_\ell, i_{\ell+1}j_\ell\}\subset E(G)\setminus E(H)$ where $i_{1}$ and $i_{\ell+1}$ are identified in the quotient $G/H$.
Likewise, an odd cycle of $G/H$ is represented by $\{ i_1j_1,i_2j_1,i_2j_2,\dots,i_\ell j_\ell\}\subset E(G)\setminus E(H)$ where $i_{1}$ and $j_{\ell}$ are identified in the quotient $G/H$. Analogously to~\eqref{eq:kernel element}, every even cycle defines an element of ${\rm Ker}A_{G/H}$, whereas an odd cycle, represented as above, gives rise to a sum
\begin{equation*}
    e(i_1j_1)-e(i_2j_1)+e(i_2j_2)-\cdots+e(i_\ell j_\ell).
\end{equation*}
On the other hand, there is an identification $\ZZ^{V(G)\setminus V(H)}\simeq \ZZ^{n+1}/\text{Lin}_{\ZZ}(Q)$ which is induced by the natural embedding $\ZZ^{V(G)\setminus V(H)}\hookrightarrow\ZZ^{n+1}=\ZZ^{V(G)}$.

Summing up the arguments above, the short exact sequence in~\eqref{eq:ses_complement} can be identified with the canonical short exact sequence associated to the homology group of the graph $G/H$:
\begin{equation}\label{eq:ses_complement2}
0 \longrightarrow H_1(G/H;\ZZ) \longrightarrow \ZZ^{E(G/H)} \xrightarrow{A_{G/H}} \ZZ^{V(G)\setminus V(H)} \longrightarrow 0.    
\end{equation}

By construction, the matrix $A_{G/H}$ is obtained from $A_{G}$ by removing columns labeled by $E(H)$ and rows labeled by $V(H)$. We have proved the following 
\begin{proposition}
Let $G$ be a bipartite graph, and let $P_G$ denote its edge polytope. Let $A$ be the matrix whose columns are the vertices of $P_G$. Then for any non-defective face $Q$ of $P_G$, the following holds:
\begin{equation}
    u(S(A)/Q)\,=\, \text{vol}_{\ZZ}({\rm Conv}(\{ 0\}\cup A_{G/H}))-\text{vol}_{\ZZ}({\rm Conv}(A_{G/H})).
\end{equation}
\end{proposition}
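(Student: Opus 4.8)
The plan is to unwind the definition of the subdiagram volume $u(S(A)/Q)$ in \eqref{eq:def_sub_vol} and match it against the difference of the two lattice volumes on the right-hand side, using the identification of the short exact sequence \eqref{eq:ses_complement} with the homology sequence \eqref{eq:ses_complement2} that was established just above the statement. The key observation is that the semigroup $S(A)/Q$ is, by the discussion preceding the proposition, the image of $\ZZ_{\geq 0}^{E(G)\setminus E(H)}$ under $A_{G/H}$, which via the identification $\ZZ^{V(G)\setminus V(H)}\simeq \ZZ^{n+1}/\mathrm{Lin}_{\ZZ}(Q)$ is precisely the subsemigroup of $\ZZ^{V(G)\setminus V(H)}$ generated by the columns of $A_{G/H}$ together with $0$. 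So the group completion $\Xi(S(A)/Q)$ is the sublattice $\mathrm{Lin}_{\ZZ}(A_{G/H})$, and the volume form in \eqref{eq:def_sub_vol} is the one normalized by this lattice.

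Next I would compare $K_-(S(A)/Q)$ — the closure of $K(S)\setminus K_+(S)$, where $K(S)=\mathrm{Conv}(S(A)/Q)$ and $K_+(S)=\mathrm{Conv}(S(A)/Q\setminus\{0\})$ — with the two polytopes appearing in the statement. Since every nonzero element of $S(A)/Q$ is a nonnegative integer combination of the columns $a$ of $A_{G/H}$, and since each such column is itself a lattice point of the semigroup, the convex hull $K_+(S)$ of $S(A)/Q\setminus\{0\}$ coincides with the ``far'' part of $\mathrm{Conv}(\{0\}\cup A_{G/H})$; more precisely $\mathrm{Conv}(\{0\}\cup A_{G/H}) = \mathrm{Conv}(A_{G/H}) \cup K_-(S(A)/Q)$ and the two pieces meet only along a boundary of measure zero. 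Hence
\[
\vol_{\ZZ}\bigl(\mathrm{Conv}(\{0\}\cup A_{G/H})\bigr) = \vol_{\ZZ}\bigl(\mathrm{Conv}(A_{G/H})\bigr) + \vol_{\ZZ}\bigl(K_-(S(A)/Q)\bigr),
\]
and the last term is exactly $u(S(A)/Q)$ by \eqref{eq:def_sub_vol}, provided the normalizing lattice is the same on both sides — which it is, because all three polytopes lie in $\mathrm{Lin}_{\RR}(A_{G/H})$ and the normalized volume is taken with respect to $\mathrm{Lin}_{\ZZ}(A_{G/H})=\Xi(S(A)/Q)$. Rearranging gives the claimed identity.

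The step I expect to require the most care is verifying that $K_+(S(A)/Q)$ really equals the convex hull of $A_{G/H}$ together with all ``deeper'' semigroup points, i.e.\ that no lattice point of $S(A)/Q\setminus\{0\}$ lies in the open region $\mathrm{Conv}(\{0\}\cup A_{G/H})\setminus\mathrm{Conv}(A_{G/H})$ other than on its closure, so that the decomposition of $\mathrm{Conv}(\{0\}\cup A_{G/H})$ into $K_-$ and $\mathrm{Conv}(A_{G/H})$ is an essentially disjoint union. This is where one uses that $A$ comes from an edge polytope: the columns of $A_G$ (hence of $A_{G/H}$) all lie on the affine hyperplane $\{\sum_i x_i = 2\}$ before passing to the quotient, so the semigroup is graded and $0$ is the unique degree-zero element, which forces $K_+(S)$ to be the union of faces of $K(S)$ not containing the origin and makes the ``sub-diagram'' region $K_-$ literally the simplex-like region between $0$ and the first graded slice. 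Once this grading is in hand, the admissibility of the semigroup (needed for $u$ to be defined) and the clean volume decomposition both follow, and the rest is the bookkeeping already carried out in the identification of \eqref{eq:ses_complement} with \eqref{eq:ses_complement2}.
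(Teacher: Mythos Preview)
Your approach matches the paper's: the paper gives no separate proof and simply declares the proposition to follow from the identification of $S(A)/Q$ with the semigroup generated by the columns of $A_{G/H}$ established in the preceding discussion. You carry out the additional step of verifying the volume decomposition $\mathrm{Conv}(\{0\}\cup A_{G/H})=\mathrm{Conv}(A_{G/H})\cup K_-(S(A)/Q)$, which the paper leaves implicit.

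One imprecision worth fixing: after passing to the quotient the columns of $A_{G/H}$ do \emph{not} all lie on $\{\sum_i x_i=2\}$. An edge $ij\in E(G)\setminus E(H)$ with exactly one endpoint in $V(H)$ maps to a single unit vector in $\ZZ^{V(G)\setminus V(H)}$, with coordinate sum $1$, so the linear grading you invoke does not descend. What you actually need, and what holds, is only that $0\notin\mathrm{Conv}(A_{G/H})$: since $H$ is an induced subgraph, every edge outside $E(H)$ has at least one endpoint outside $V(H)$, so each column of $A_{G/H}$ is a nonzero $\{0,1\}$-vector. From this one gets $K_+(S)=\mathrm{Conv}(A_{G/H})+\mathrm{cone}(A_{G/H})=\bigcup_{t\geq 1}t\cdot\mathrm{Conv}(A_{G/H})$ and $\mathrm{Conv}(\{0\}\cup A_{G/H})=\bigcup_{t\in[0,1]}t\cdot\mathrm{Conv}(A_{G/H})$, whose intersection is exactly $\mathrm{Conv}(A_{G/H})$; the decomposition and the volume identity then follow as you wrote.
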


\begin{example}\label{ex:artificial_pt3}  
    Let $G$ be as in Example \ref{ex:artificial_pt1}. Given a set $S\subset V(G)$, we denote $G_S$ the subgraph of $G$ which is obtained from $G$ by selecting the vertices which appear in $S$ and the edges attached to such vertices. Figure~\ref{fig:three_subgraphs} illustrates the quotients graphs $G/H$ for the choices of $H$ in $\{G_{03},G_{15},G_{1256}, G_{012345}, G_{012346}\}$. Note that any other choice of subdiagram $H$ corresponding to a single edge leads to one of the two topologies of graph in the first line of Figure~\ref{fig:three_subgraphs}.
    \begin{figure}[h]
  \begin{center}
\includegraphics[]{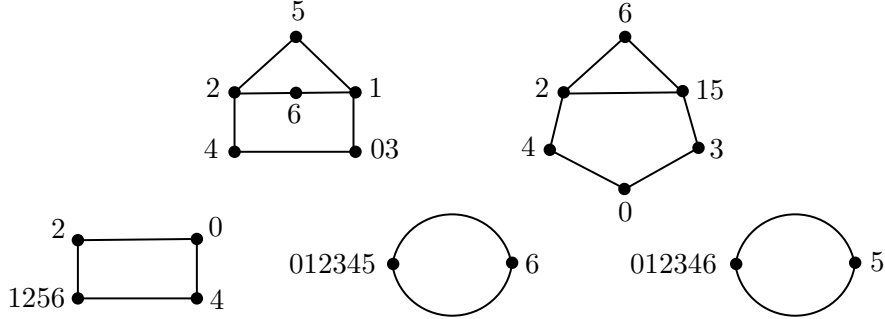}
  \end{center}
  \caption{Quotients graphs $G/H$ for $H=G_{03},G_{15},G_{1256}, G_{012345}, G_{012346}$.}\label{fig:three_subgraphs}
  \end{figure}
 We provide some of the matrices $A_{G/H}$, which can be derived from the matrix 
$A$ in Figure~\ref{fig:graph_AG_artificial} by deleting specific rows and edges, as described above:
  $$
  A_{G/G_{1256}}=
  \kbordermatrix{
    & 03 &04 &13 &24 \\
   0& 1  &1  &0  &0 \\
   3& 1  &0  &1  &0\\
   4& 0  &1  &0  &1
  },\ \ \ 
  A_{G/G_{012345}}=
  \kbordermatrix{
    & 16 &26 \\
   6& 1  &1  
  },\ \ \ 
  A_{G/G_{012346}}=
  \kbordermatrix{
    & 15 &25 \\
   5& 1  &1  
  }.
  $$
The matrices $A_{G/H}$ for $H=G_{03},G_{15}$ are obtained in the same manner. 
\end{example}

In addition to the description given above, the subdiagram volume has also an interpretation as the multiplicity at the point $0$ of a toric variety, see~\cite[Theorem~3.14]{GKZbook}. We conclude this subsection by exploiting this perspective in our case of interest.

Consider $Y_{G/H}$ to be be the affine toric variety associated to the linear map $A_{G/H}$.
In general, $Y_{G/H}$ is not homogeneous.
We write $I_{G/H}\subset\CC[z_{ij};ij\in E(G)\setminus E(H)]$ for its defining toric ideal.
The following lemma is a direct consequence of the fact that $H_1(G/H;\ZZ)$, as defined in \eqref{eq:ses_complement2}, is generated by cycles of $G/H$.
A practical consequence of this lemma is  that $I_{G/H}$ is obtained from the toric ideal $I_G\subset\CC[z_{ij};ij\in E(G)]$, associated to $G$, by substituting $z_{ij}=1$ for $ij\in E(Q)$.

\begin{lemma}\label{lemma:f_C}
    Given a cycle $C$ of $G/H$, we set $f_C$ to be
    \begin{equation*}
        \begin{cases}
            z_{i_1j_1}\cdots z_{i_\ell j\ell}-z_{i_2j_1}\cdots z_{i_\ell j_{\ell-1}}z_{i_{\ell+1}j_\ell}&(\scalebox{0.85}{$C=\{ i_1j_1,i_2j_1,i_2j_2,\dots,i_\ell j_\ell, i_{\ell+1}j_\ell\}\subset E(G)\setminus E(Q):\text{even}$})\\
            z_{i_1j_1}\cdots z_{i_\ell j\ell}-z_{i_2j_1}\cdots z_{i_\ell j_{\ell-1}}&(\scalebox{0.8}{$C=\{ i_1j_1,i_2j_1,i_2j_2,\dots,i_\ell j_\ell\}\subset E(G)\setminus E(Q):\text{odd}$}).
        \end{cases}
    \end{equation*}
    Then, the set $\{ f_C\mid C:\text{cycle of }G\}$ generates the ideal $I_{G/H}$.
\end{lemma}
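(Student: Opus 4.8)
The plan is to identify the toric ideal $I_{G/H}$ via its standard generating set—the binomials $z^{\mathbf{a}^+}-z^{\mathbf{a}^-}$ indexed by elements $\mathbf{a}\in\operatorname{Ker}A_{G/H}$—and then show that each generator $f_C$ attached to a cycle $C$ of $G/H$ is exactly such a binomial, and conversely that the $f_C$ already generate. First I would recall that, since $Y_{G/H}$ is the affine toric variety attached to the (possibly non-homogeneous) integer matrix $A_{G/H}$, its ideal $I_{G/H}$ is the lattice ideal of $\operatorname{Ker}_{\ZZ}A_{G/H}$, generated by all binomials $z^{\mathbf{m}^+}-z^{\mathbf{m}^-}$ with $\mathbf{m}\in\operatorname{Ker}A_{G/H}$; this is the classical description of toric ideals (e.g. Sturmfels). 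Then the short exact sequence \eqref{eq:ses_complement2} identifies $\operatorname{Ker}A_{G/H}$ with $H_1(G/H;\ZZ)$ inside $\ZZ^{E(G/H)}$, and the preceding discussion has already described how an even cycle $C=\{i_1j_1,i_2j_1,\dots,i_\ell j_\ell,i_{\ell+1}j_\ell\}$ contributes the lattice vector $e(i_1j_1)-e(i_2j_1)+\cdots+e(i_\ell j_\ell)-e(i_{\ell+1}j_\ell)$ while an odd cycle contributes $e(i_1j_1)-e(i_2j_1)+\cdots+e(i_\ell j_\ell)$. Separating positive and negative parts of these vectors yields precisely the two binomials in the statement of the lemma, so $f_C\in I_{G/H}$ for every cycle $C$.

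For the reverse inclusion I would argue that the cycle vectors generate $\operatorname{Ker}A_{G/H}\simeq H_1(G/H;\ZZ)$ as a group—this is exactly the sentence preceding the lemma, namely that $H_1(G/H;\ZZ)$ is generated by the (independent) cycles of $G/H$, which is standard graph homology. One then needs the ring-theoretic fact that if a subgroup $L\subseteq\ZZ^N$ is generated by vectors $\mathbf{m}^{(1)},\dots,\mathbf{m}^{(r)}$, the lattice ideal $I_L$ is generated by the corresponding binomials $z^{\mathbf{m}^{(i)+}}-z^{\mathbf{m}^{(i)-}}$ together with the saturation condition—but here one must be slightly careful: in general the lattice ideal of $L$ is the saturation $(z^{\mathbf{m}^{(i)+}}-z^{\mathbf{m}^{(i)-}}:i)\colon(z_1\cdots z_N)^\infty$, so the binomials of a generating set of $L$ need not literally generate $I_L$ as an ideal. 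I would resolve this by invoking the specific structure of graph homology: every element of $H_1(G/H;\ZZ)$ is an \emph{integral} sum of cycles in which, after reduction, one can realize the corresponding binomial relation by a chain of cycle–binomial moves without introducing extraneous monomial factors, so the colon/saturation step is vacuous. Equivalently, one appeals to the well-known result (Villarreal, Ohsugi–Hibi) that the toric ideal of a graph is generated by the binomials attached to its primitive even closed walks; in the non-homogeneous ``signed'' setting of $G/H$ the analogous statement covers both even and odd cycles, which is precisely the content claimed.

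The main obstacle I anticipate is exactly this saturation subtlety: passing from ``the cycle vectors generate the kernel lattice'' to ``the cycle binomials generate the ideal'' requires either citing the graph-theoretic generation theorem for toric ideals of graphs (and checking the odd-cycle variant applies to $G/H$, which has both contracted endpoints possibly on the same side, producing odd cycles), or giving a direct Gröbner/normal-form argument that reduces an arbitrary binomial in $I_{G/H}$ modulo the $f_C$ to zero. I would opt for the former, since the paper has already set up $G/H$ so that its cycles correspond bijectively to a generating set of $H_1(G/H;\ZZ)$, and the even/odd dichotomy in the lemma is precisely the dichotomy between cycles of $G/H$ whose two ``ends'' in $G$ lie on opposite sides versus the same side of the bipartition. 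Once the generation theorem is quoted, the proof is a one-line matching of notation: the binomial attached to a cycle in the sense of Villarreal–Ohsugi–Hibi is literally $f_C$ as written, completing the argument.
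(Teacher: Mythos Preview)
Your approach is essentially the paper's: the paper states the lemma as ``a direct consequence of the fact that $H_1(G/H;\ZZ)$, as defined in \eqref{eq:ses_complement2}, is generated by cycles of $G/H$,'' and offers no further argument. You are in fact more careful than the paper in flagging the saturation step from lattice generators to ideal generators, and your proposed resolution via the Villarreal/Ohsugi--Hibi description of toric ideals of graphs (extended to the possibly non-bipartite quotient $G/H$ to cover the odd-cycle case) is the right way to fill this in.
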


Therefore, the generators obtained in the lemma can be used to compute the the subdiagram volume $u(S(A)/Q)$
 as the local multiplicity ${\rm mult}_0(Y_{G/H})$.
Let $R_{G/H}:=\CC[z_{ij};ij\in E(G)\setminus E(H)]/I_{G/H}$ and write $\mathfrak{m}\subset R_{G/H}$ for the maximal ideal corresponding to the origin.
Then, ${\rm mult}_0(Y_{G/H})$ can be read off from the leading coefficient of the Hilbert polynomial of the associated graded ring ${\rm gr}_{\mathfrak{m}}(R_{G/H})$ with respect to the $\mathfrak{m}$-adic filtration of $R_{G/H}$.
To compute the associated graded ring, a standard technique of Gr\"obner basis can be adapted.
For these claims, see, e.g., \cite[\S12.1 and Proposition 15.28]{eisenbud2013commutative}.

\begin{example}
Let $G$ be as in Example \ref{ex:artificial_pt1}.
For the subgraphs $H=G_{03},G_{15}$, the Hilbert polynomials of ${\rm gr}_{\mathfrak{m}}(R_{G/H})$ are given by
$$\frac{1}{8}t^{4}+\frac{11}{12}t^{3}+\frac{19}{8}t^{2}+\frac{31}{12}t+1,\ \ \ 
\frac{1}{12}t^{4}+\frac{2}{3}t^{3}+\frac{23}{12}t^{2}+\frac{7}{3}t+1.$$
The local multiplicity ${\rm mult}_0(Y_{G/H})$ is thus given by $\frac{1}{8}\times 4!=3$, and $\frac{1}{12}\times 4!=2$, respectively.
On the other hand, for subgraphs $H=G_{1256},G_{012345},G_{012346}$, the Hilbert polynomials are given by $t^2+2t+1$, $t+1$, and $t+1$.

For the subgraph $H=G_{03}$, the toric ideal $I_{G/H}$ is generated by the following elements corresponding to three cycles in $G/H$:
$$
z_{16}\, z_{25}-z_{15}\, z_{26},\:\
 z_{04}\, z_{13}\, z_{26}- z_{16}\, z_{24},\:\ z_{04}\, z_{13}\, z_{25}- z_{15}\, z_{24}.
$$
The lowest homogeneous parts of these polynomials $z_{16}\, z_{25}-z_{15}\, z_{26},  z_{16}\, z_{24}, z_{15}\, z_{24}$ generates an ideal $I'\subset \CC[z_{04},z_{13},z_{15},z_{16},z_{24},z_{25},z_{26}]$.
Using the algorithm \cite[Proposition 15.28]{eisenbud2013commutative}, we find that ${\rm gr}_{\mathfrak{m}}(R_{G/H})\simeq \CC[z_{04},z_{13},z_{15},z_{16},z_{24},z_{25},z_{26}]/I'$.
In several examples, we confirmed that the lowest homogeneous parts of binomials in Lemma~\eqref{lemma:f_C} generate an ideal $I'$ such that ${\rm gr}_{\mathfrak{m}}(R_{G/H})\simeq \CC[z_{ij};ij\in E(G)\setminus E(H)]/I'$, which seems to hold in general. In \eqref{mathrepo}, we provide the code in  \texttt{Macaulay2} \cite{M2} to perform such a computation.
\end{example}

\section{Determinants and Euler discriminants}\label{sec:Euler_discriminant}
In this section, we focus on the case when $Z$ is a subvariety of $\CC^{(k+1)(n-k)}$. Namely, we consider a matrix $z$ of size $(k+1)\times(n-k)$, as in~\eqref{eq:C(z)}, except that here $z$ varies in $Z$. Let us set $S:=\{ (I,J) \in \{ 0,\dots,k\}\times \{k+1,\dots,n\} \,:\, |I| = |J|, \,\det(z_{I,J})\neq 0\}$.
We define the reduced discriminant of $Z$ as
\begin{equation}\label{eq:discriminant}
E^{\,{\rm red}}_\chi (z)\,\coloneqq\,\prod_{(I,J)\in S}\det (z_{I,J}).    
\end{equation}
To emphasize the choice of $Z$, we may write $E^{\,{\rm red}}_\chi( Z)$ for $E^{\,{\rm red}}_\chi(z)$. Analogously to Section~\ref{sec:PAD_sparse}, 
for each $z\in Z$, the column vectors in \eqref{eq:C(z)} define a hyperplane arrangement $\mathscr{A}_z$ in $\mathbb{P}^k(\mathbb{C})={\rm Proj}\CC[\var_0,\dots,\var_k]$, and its complement $X_z$, see~\eqref{eq:Az} and~\eqref{eq:Xz_hyperplanes}, respectively.
\begin{theorem}\label{thm:Euler_discr}
Suppose that $\chi_z>0$ for some $z\in Z$. Then, the following identity is satisfied:
    \begin{equation}
        \nabla_\chi( Z)\,=\,\{ z\in  Z\mid E^{\,{\rm red}}_\chi(z)=0\}.
    \end{equation}
\end{theorem}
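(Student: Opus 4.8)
The plan is to relate the Euler characteristic $\chi_z$ to the combinatorics of the hyperplane arrangement $\mathscr{A}_z$, and then to translate the condition ``$\mathscr{A}_z$ has the generic combinatorial type'' into the condition ``$E^{\mathrm{red}}_\chi(z)\neq 0$''. Recall that, by \cite[Theorem 3.3.7]{orlikarrangements}, the signed Euler characteristic $\chi_z = (-1)^k\chi(X_z)$ equals the beta invariant $\beta(\mathscr{A}_z)$, which is a matroid invariant of the underlying (realizable) matroid $M_z$ of the arrangement. So the statement we must prove is that $\beta(M_z)$ drops below its generic value $\chi^*$ exactly on the locus where $E^{\mathrm{red}}_\chi$ vanishes.

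First I would show the inclusion $\nabla_\chi(Z)\subseteq\{E^{\mathrm{red}}_\chi=0\}$, equivalently its contrapositive: if $E^{\mathrm{red}}_\chi(z)\neq 0$, then $\chi_z=\chi^*$. The point is that the vanishing pattern of the maximal minors $\det(z_{I,J})$, $(I,J)\in S$, completely determines the matroid $M_z$: the set $S$ records which square submatrices are generically nonsingular, and $M_z$ has ``generic type relative to $Z$'' precisely when none of these generically-nonzero minors actually vanishes at $z$. Any two such ``generic'' points of $Z$ give the same matroid, hence the same beta invariant; and since $\chi^*$ is the maximum of $\chi_z$ over $z\in Z$, this common value must be $\chi^*$. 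One must be a little careful here: the matroid of the $(k+1)\times(n+1)$ matrix $[E_{k+1}\mid z]$ also involves minors that mix the identity columns with columns of $z$, but these are (up to sign) exactly the minors $\det(z_{I',J})$ for subsets $I'$ of the rows, so they are again among the $\det(z_{I,J})$; I would make this bookkeeping explicit.

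Next, the reverse inclusion $\{E^{\mathrm{red}}_\chi=0\}\subseteq\nabla_\chi(Z)$: suppose $E^{\mathrm{red}}_\chi(z)=0$, so some $\det(z_{I,J})=0$ with $(I,J)\in S$. This forces a genuine degeneration of the matroid — a circuit appears among the hyperplanes indexed (via the Cayley/edge-polytope dictionary of Remark~\ref{rmk:PG_NewtonPolytope}) by $I\cup J$ that was not forced by the combinatorial type of $Z$. I would invoke the monotonicity of the beta invariant under such specialization: the beta invariant of a matroid is strictly larger than that of any proper weak-map image (or, dually, the signed Euler characteristic is strictly smaller after a dependency is imposed), so $\chi_z<\chi^*$. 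The hypothesis $\chi_z>0$ for some $z\in Z$ guarantees we are not in the degenerate regime where the arrangement fails to be essential and all the invariants collapse; it is exactly what makes the strict-drop statement hold. This monotonicity is the technical heart, and I would either cite it from matroid theory (e.g.\ the behavior of $\beta$ under weak maps, cf.\ \cite{clarke2023matroid}) or deduce it from Theorem~\ref{thm:drop_Euler} / Theorem~\ref{thm:PAD} applied componentwise.

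The main obstacle I anticipate is the reverse inclusion — specifically, proving the \emph{strict} drop of $\chi_z$ when a single previously-nonzero minor vanishes, while only that one minor vanishes. One cannot simply quote Theorem~\ref{thm:PAD}, because that describes the principal $A$-determinant on the full coordinate-subspace setting, not an arbitrary smooth $Z$; the honest argument has to produce, from the vanishing of $\det(z_{I,J})$, an explicit new dependency in the arrangement and then argue that $\beta$ strictly decreases. The cleanest route is probably: reduce to the connected/irreducible case, use the deletion–contraction recursion for the beta invariant together with the fact that for a loopless, coloop-free connected matroid of rank $\geq 2$ one has $\beta>0$, and show that imposing the circuit strictly lowers the count of bases in a way that the alternating-sum formula for $\beta$ detects. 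I would also need to rule out the possibility that the vanishing of one chosen minor is ``accidentally compensated'' — but this cannot happen on a smooth $Z$ once $(I,J)\in S$, since $\det(z_{I,J})$ is not identically zero on $Z$, so its zero locus is a proper hypersurface and a generic point of it still has all \emph{other} $S$-minors nonzero, pinning down the degenerate matroid uniquely and letting us compare $\beta$ values directly.
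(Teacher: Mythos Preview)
Your handling of the inclusion $\nabla_\chi(Z)\subseteq\{E^{\mathrm{red}}_\chi=0\}$ is essentially the paper's: on the complement of the minor locus the basis set of the matroid $M_z$ is constant, so the intersection lattice and hence $\chi_z$ are constant and equal to $\chi^*$. The bookkeeping you flag (that minors of $[E_{k+1}\mid z]$ mixing identity columns with columns of $z$ are, up to sign, minors $\det(z_{I,J})$) is exactly right and the paper leaves it implicit.

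The reverse inclusion is where your proposal diverges from the paper and where there is a genuine gap. You want to invoke ``strict monotonicity of the beta invariant under proper weak maps''. This is \emph{not} a standard citable theorem in matroid theory; weak maps do not in general behave monotonically with respect to Tutte-type invariants, and the references you gesture at (\cite{clarke2023matroid}, Theorem~\ref{thm:drop_Euler}, Theorem~\ref{thm:PAD}) establish, respectively, that $\chi_z$ is a matroid invariant, and that the drop occurs in the GKZ/sparse setting --- none of them gives the strict inequality $\beta(M_z)<\beta(M^*)$ for an arbitrary proper weak-map image on an arbitrary smooth $Z$. Your proposed deletion--contraction route is too vague: the recursion $\beta(M)=\beta(M\setminus e)+\beta(M/e)$ compares $M$ to its own minors, not to a weak-map image $N$, and there is no obvious way to thread the two together. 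In effect, the statement you want to cite is equivalent to the theorem itself.

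The paper avoids this entirely by working with the Orlik--Solomon algebra. Since every independent set of $M_z$ is independent in $M^*$, there is a graded surjection $A(M^*)\twoheadrightarrow A(M_z)$, hence a surjection of Aomoto complexes $H^k(M^*)\twoheadrightarrow H^k(M_z)$ for generic $\lambda$; by \cite{esnault1992cohomology} these have dimensions $\chi^*$ and $\chi_z$. The whole point is then to exhibit a \emph{concrete nonzero} element in the kernel. Given a vanishing minor $\det(z_{I,J})$ with $(I,J)\in S$, the paper writes down an explicit class $\omega_{\tilde I}\in H^k(M^*)$ (a product of differences of the generators $e_i$) that lands in the Orlik--Solomon ideal of $M_z$ because the new dependency makes it a $\partial e_T$ or $e_T$ relation. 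To show $\omega_{\tilde I}\neq 0$ in $H^k(M^*)$, the paper identifies $H^k(M^*)$ with the ring of functions on the critical locus $V$ of the master function (via \cite{Huh13}), and checks that $\omega_{\tilde I}$ corresponds to $\pm\det(z^*_{I,J})/\prod h_{i_a}$, which is visibly nonzero on $V$ provided $V\neq\varnothing$. The hypothesis $\chi^*>0$ enters precisely here: it guarantees $|V|=\chi^*>0$, so the evaluation map $H^k(M^*)\to\CC^V$ detects $\omega_{\tilde I}$. Your intuition that the hypothesis ``avoids a degenerate regime'' is correct, but this is the sharp place where it is used.
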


\begin{proof}
    By abuse of notation, We write $\mathscr{A}_z$ for the dehomogenized hyperplane arrangement that \eqref{eq:C(z)} determines by choosing $\{ \var_0=0\}$ as the hyperplane at infinity.
    Suppose that $z_1,z_2\in Z\setminus ({\rm R.H.S.})$.
    Then, the intersection posets are equivalent: $L(\mathscr{A}_{z_1})=L(\mathscr{A}_{z_2})$.
    This is because the matroids corresponding to the points $z_1$ and $z_2$ have the same basis set.
    Therefore, $\chi_z=\chi^*$ for any $z\notin ({\rm R.H.S.})$ since the complement of the $({\rm R.H.S.})$ has non-empty intersection with the locus where the Euler characteristics takes its maximum.
    This proves the inclusion $({\rm L.H.S.})\subset({\rm R.H.S.})$.

    Let us show the other inclusion.
    We write $M_z$ for the matroid of arrangements defined by the matrix \eqref{eq:C(z)}.
    As long as $E^{\,{\rm red}}_\chi(z)\neq 0$, the matroid $M_z$ does not depend on the choice of $z$.
    We denote this matroid by $M^*$. Let $z\in Z$ such that $E^{\,{\rm red}}_\chi(z)=0$.
    We denote $A_z=A(M_z)$ the Orlik-Solomon algebra of $M_z$. To describe this algebra, we introduce the following notation. For an ordered subset $T=\{ t_1,\dots t_p\}\subset\{ 1,\dots,n\}$, we define
    \begin{equation*}
        e_T\,:=\,e_{t_1}\cdots e_{t_p}\ \ \ \text{and}\ \ \ \partial e_T\,:=\,\sum_{k=1}^{p}(-1)^{k-1}e_{t_1}\cdots \widehat{e_{t_k}}\cdots e_{t_p},
    \end{equation*}
    where $\widehat{e_{t_k}}$ means that the element $e_{t_k}$ is omitted. The Orlik-Solomon algebra $A_z$ is then given as the quotient of the exterior algebra $\wedge^\bullet \mathbb{C}^{\{1,\dots,n\}}$ by the ideal $I(\mathscr{A}_z)$ generated by the set
    \begin{equation*}
        \left\{ e_T\mid \bigcap_{t\in T}H_t=\varnothing\right\}
        \cup
        \left\{
        \partial e_T\mid \bigcap_{t\in T}H_t\neq \varnothing\ \text{and}\ T\text{ is dependent}
        \right\}.
    \end{equation*}
    The $i$-th graded part of $A_z$ is denoted by $A_z^i$.
    Since any independent subset of $M_z$ is that of $M^*$, there is a natural surjection
    \begin{equation}\label{eq:surjection}
        A(M^*)\to A_z
    \end{equation}
    which respects the grading. We consider generic parameters $\lambda_1,\dots,\lambda_n\in\mathbb{C}$ and set
    \begin{equation*}
    \omega_\lambda:=\lambda_1e_1+\cdots+\lambda_ne_n,
    \end{equation*}
    where $e_i$ is the element of the Orlik-Solomon algebra (either $A(M^*)$ or $A_z$) which corresponds to $i$-th hyperplane.
    The surjection \eqref{eq:surjection} induces a surjection
    \begin{equation}\label{eq:degeneration}
        H^k(M^*)\,:=\,A^k(M^*)/\omega_\lambda A^{k-1}(M^*)\,\overset{\varphi}{\to}\, A^k_z/\omega_\lambda A^{k-1}_z \,=:\,H^k(M_z).
    \end{equation}
    By the results of \cite{esnault1992cohomology} (see also \cite{schechtman1994local}), we have $\dim_{\CC} H^k(M^*)=\chi^*$ and $\dim_{\CC} H^k(M_z)=\chi_z$.
    
    It remains to prove that the \eqref{eq:degeneration} has a non-trivial kernel. Let us consider a set \hbox{$\tilde{I}=\{ i_0,i_1,\dots,i_k\}\subset\{0,1,\dots,n\}$} of cardinality $k+1$ with $i_0<\cdots<i_k$.
    We set 
    \begin{equation*}
        \omega_{\tilde{I}}\,:=\,
        \begin{cases}
            e_{i_1}\cdots e_{i_k}&i_0=0\\
            (e_{i_1}-e_{i_0})\cdots (e_{i_k}-e_{i_{0}}) &i_0\neq 0.
        \end{cases}
    \end{equation*}
    Since $E^{\,{\rm red}}_\chi(z)=0$, there exists a pair $(I,J)\in \{ 0,\dots,k\}\times\{k+1,\dots,n\}$ such that $ |I| = |J|$ and $\det(z_{I,J})=0$ at $z$.
    We set $\tilde{I}:=(\{ 0,\dots,k\}\setminus I)\cup J$.
    Then, $\omega_{\tilde{I}}\in H^k(M^*)$ is in the kernel of the morphism \eqref{eq:degeneration}.
    Indeed, if $i_0=0$, then $\{i_1,\dots,i_k\}$ is a dependent subset of $M_z$ and 
    \begin{equation*}
\varphi(\omega_{\tilde{I}})
        \,=\,e_{i_1\cdots i_k}\,=\,e_{i_1}\partial (e_{i_1\cdots i_k})\,\in\, I(\mathscr{A}_z).
    \end{equation*}
    If $i_0\neq 0$, this means that $\cap_{j=0}^kH_{i_j}\neq\varnothing$ and
    \begin{equation*}       \varphi(\omega_{\tilde{I}})\,=\,e_{i_1\cdots i_k}-e_{i_0i_2\cdots i_k}+\cdots
        \,=\,\partial e_{i_0\cdots i_k}\,\in\, I(\mathscr{A}_z).
    \end{equation*}
    Hence, it is enough to show that $\omega_{\tilde{I}}$ is a non-zero element of $H^k(M^*)$.
    Given $z^*\in Z$ generic, we write $\mathcal{O}_{*}$ for the structure ring of the affine variety ${\CC^k\setminus L(\mathscr{A}_{z^*})}$.
    By abuse of notation we write $h_j$ for the dehomogenization of $h_j(\alpha;z)$ introduced in \S\ref{sec:3.1} with $\alpha_0=1$. 
    We consider the ideal $\mathcal{I}\subset\mathcal{O}_*$ generated by $a_1,\dots,a_k$ where $a_i$ is the coefficient of $d\var_i$ of a one-form     \begin{equation}\label{eq:one form}
        \lambda_1d\log (h_1)+\cdots+\lambda_nd\log (h_n)
    \end{equation} 
    on $\CC^k\setminus L(\mathscr{A}_{z^*})$.
    Then, there is an isomorphism $H^k(M^*)\simeq \mathcal{O}_{*}/\mathcal{I}$.
    Indeed, given an element $e_{i_1}\cdots e_{i_k}\in A^k(M^*)$, we assign the coefficient of $d\var_1\wedge \cdots \wedge d\var_k$ in $d\log (h_{i_1})\wedge\cdots\wedge d\log (h_{i_k})$.
    Let $V$ be the zero set of the one form \eqref{eq:one form}.
    Note that the cardinality of $V$ is $\chi^*>0$ by \cite[Theorem 1]{Huh13}.
    Through this correspondence, $\omega_{\tilde{I}}$ corresponds to $\pm \det(z^*_{I,J})\frac{d\var_1\wedge\cdots\wedge d\var_k}{h_{i_1}\cdots h_{i_k}}$ when $i_0=0$ or to $\pm\det(z^*_{I,J})\frac{d\var_1\wedge\cdots\wedge d\var_k}{h_{i_0}\cdots h_{i_k}}$ when $i_0\neq 0$, where $z^*\in Z$ is a generic point.
    Furthermore, there is an isomorphism $\mathcal{O}_*/\mathcal{I}\ni [f(x)]\mapsto \left( f(a)\right)_{a\in V}\in\CC^V$.
    Clearly, $\omega_{\tilde{I}}$ corresponds to a non-zero vector in $\CC^V$.
\end{proof}

Note that the hypothesis that $\chi_z>0$ is fundamental. Central arrangements, for instance, provide examples with $\chi_z=0$ for all $z\in Z$ and non-trivial polynomial $E_{\chi}^{\,{\rm red}}(z)$. A simple example is given for $k=2,n=1$, and $z = [0 \,\,z_{11} \,\,0]^T$.
Let
$$
E^{\,{\rm red}}_\chi(z)=\prod_{\Delta}\Delta(z),
$$
be the irreducible decomposition of $E^{\,{\rm red}}_\chi(z)$.
On a generic point $z\in V_Z(\Delta)$, the number $\chi_z$ is independent of $z$.
We set $m_\Delta:=\chi^*-\chi_z$.
It is convenient to set
$$
E_\chi(z):=\prod_{\Delta}\Delta(z)^{m_\Delta}
$$
to keep track of the drop of Euler characteristics.

\begin{example}\label{ex: artificial_specialized}
    Consider the hyperplane arrangement introduced in Example~\ref{ex:artificial_pt1}. We compute the Euler discriminant for two choices of subspaces $ Z_1, Z_2 \subset \CC^{12}$. In particular, let $ Z_1$ be parametrized by the matrix
    \begin{equation}
       C_1 = \begin{bmatrix}
            w_1+w_2 & 1 & 0 & 0\\
            1 & 0 & 1 & w_2+w_3 \\
            0 & w_1-w_3 & w_1+w_2+w_3 & 1
        \end{bmatrix},
    \end{equation}
    and $ Z_2 =  Z_1 \cap \{w_1+w_2+w_3 = 0\}$. Then, since $ Z_1$ intersects transversally the subspace defined by the matrix $z_G$ in Example~\ref{ex:artificial_pt1}, we have   
    \begin{align*}
    E_\chi( Z_1) \, =\,& (w_1+w_2)^3(w_1-w_3)^3(w_2+w_3)^2(w_1+w_2+w_3)^2\\&(w_1w_2+w_1w_3+w_2^2+2w_2w_3+w_3^2-1)^2\\
    &(w_1^2 + w_1 w_2 - w_1 w_3 + w_1 - w_2 w_3 + w_2 + w_3)\\ &(w_1^2 w_2 + w_1^2 w_3 + w_1 w_2^2 - w_1 w_3^2 - w_2^2 w_3 - w_2 w_3^2 + 1).
    \end{align*}
    Note that the multiplicities of the components remain unchanged from those in \eqref{eq:PAD_example}, consistently with \cite[Theorem 3.3]{esterov2013discriminant}. To compute these exponents using \cite[Theorem 3.3]{esterov2013discriminant}, we employed the function \texttt{EulerDiscriminantQ} from the \texttt{julia} package \texttt{PLD.jl}, \cite{fevola2024principal}. This function returns the value $\chi_{z_Q}$ for each component defined by $\{\Delta_{A\cap Q}(z)=0\}$ of the principal $A$-determinant. For the subspace $ Z_2$, some of the minors in \eqref{eq:PAD_example} identically vanish when plugging in the parametrization. In fact the value of the generic Euler characteristic drops to $\chi^*( Z_2) = 3$, and
    \begin{align*}
    E_\chi( Z_2) \, =\,&  (w_1+w_2)^2(w_1-w_3)^2(w_2+w_3)^2\\
    &(w_1^2 w_2 + w_1^2 w_3 + w_1 w_2^2 - w_1 w_3^2 - w_2^2 w_3 - w_2 w_3^2 + 1).
    \end{align*}
    Note that, in this case, the exponents are not preserved in contrast to those in \eqref{eq:PAD_example}.
\end{example}

\section{Euler discriminant and singular locus}\label{sec:SingularLocus}
In this section, we discuss a relation between Euler discriminants and singular locus of a $\mathcal{D}$-module which an Euler integral underlies.
Note that $f_i$ below do not have to be linear polynomials.
We use a standard notation of $D$-modules as in \cite{hotta2007d}.
In this section, we assume $ Z$ is a smooth algebraic subvariety of $\C^{(k+1)(n-k)}$.

\subsection{D-modules and Euler discriminant}
We first prove some general propositions on $D$-modules and then relate it to Gauss-Manin connection.

Let $D_ Z$ be the sheaf of differential operators on $ Z$.
Given a coherent $D_ Z$-module $M$, its characteristic variety ${\rm Ch}(M)$ is canonically defined as a closed conic subvariety of the cotangent bundle $T^* Z$.
We write $\varpi:T^* Z\to Z$ for the natural projection.
The singular locus ${\rm Sing}(M)$ of $M$ is defined by
\begin{equation*}
    {\rm Sing}(M)\,=\,\varpi\left({\rm Ch}(M)\setminus T^*_Z Z\right),
\end{equation*}
where $T^*ZZ$ is the zero section of $T^*Z$.
For a given point $z\in Z$, we write $\iota_z:\{ z\}\hookrightarrow  Z$ for the canonical embedding.
For a holonomic $D_ Z$-module $M$, we set
\begin{equation*}   \chi_z(M)\,\coloneqq\,\sum_{i=0}^\infty (-1)^i\dim_{\C}{\rm Ext}^i_{D_Z}(M,\mathcal{O}_z),
\end{equation*}
where $\mathcal{O}_z$ is the ring of convergent series at $z$.
We write $\chi^*(M)$ for the holonomic rank of $M$. 
Note that $\chi^*(M)$ is $\chi_z(M)$ for generic $z$.
Thanks to the formula \eqref{eq:5.6}, the notations $\chi_z(M)$ and $\chi^*(M)$ are compatible with $\chi_z$ and $\chi^*$. This observation motivates the following definition 
\begin{equation*}
    \nabla_\chi( Z,M)\,\coloneqq\,\{z\in Z\mid \chi_z(M)<\chi^*(M)\}.
\end{equation*}
Note that $\nabla_\chi( Z,M)$ may not be a closed subvariety of $ Z$ for general $M$.
A simple counter example is given by local cohomology.
Nonetheless, we can prove the following theorem.

\begin{theorem}\label{thm:5.6}
    Let $M$ be a holonomic $\mathcal{D}_Z$-module and let ${\rm Sing}^1(M)$ be the union of the codimension $1$ components of the singular locus of $M$.
    Then, one has the inclusion
    \begin{equation}\label{eqn:codim1 inclusion}
        {\rm Sing}^1(M)\,\subset\, \overline{\nabla_{\chi}( Z,M)}.
    \end{equation}
\end{theorem}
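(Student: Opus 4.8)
The plan is to show that at a generic point $z_0$ of each codimension-one component $W$ of $\operatorname{Sing}(M)$, the Euler characteristic $\chi_{z_0}(M)$ drops below the generic value $\chi^*(M)$; this places $z_0$, and hence $W$, inside $\overline{\nabla_\chi(Z,M)}$. The key mechanism is the behavior of the characteristic cycle under restriction, combined with Kashiwara's local index formula (as invoked already in the proof of Theorem~\ref{thm:drop_Euler}): for a holonomic module $M$, the number $\chi_z(M)$ is computed as a sum over the irreducible components of $\operatorname{Ch}(M)$ of their multiplicities weighted by local Euler obstructions of the projected varieties at $z$. Away from $\operatorname{Sing}(M)$ only the zero section contributes (with multiplicity equal to the holonomic rank), giving $\chi_z(M)=\chi^*(M)$; the point is to see that as $z$ approaches $W$, a genuinely new contribution appears and the generic value cannot be maintained.

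Concretely, I would first reduce to a local statement near a generic point $z_0\in W$: shrinking $Z$, we may assume $\operatorname{Sing}(M)=W$ is smooth of codimension $1$, and $\operatorname{Ch}(M)=T^*_Z Z \cup \overline{C}$ where $C$ is a conic Lagrangian lying over $W$ (possibly with several components, all projecting onto $W$ since $W$ is irreducible of codimension one, hence the only Lagrangian that can sit over it besides the zero section is, generically along $W$, the conormal bundle $T^*_W Z$). Let $m>0$ be the multiplicity of $T^*_W Z$ in $\operatorname{Ch}(M)$. Then I would either (a) restrict to a generic curve $\iota\colon \Delta\hookrightarrow Z$ meeting $W$ transversally at $z_0$, where the restricted $D_\Delta$-module $\iota^! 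M$ (or $\iota^* M$) is holonomic on a curve and its characteristic cycle is $\chi^*(M)\cdot T^*_\Delta\Delta + m\cdot T^*_{z_0}\Delta$ by the transversality of the restriction to the characteristic cycle; or (b) apply Kashiwara's index formula directly on $Z$. Either way one reads off $\chi_{z_0}(M) = \chi^*(M) - m\cdot(\text{local Euler obstruction term})$, and the transversality ensures the obstruction term is a positive integer, so $\chi_{z_0}(M)<\chi^*(M)$. This shows $z_0\in\nabla_\chi(Z,M)$ for generic $z_0\in W$, hence $W\subset\overline{\nabla_\chi(Z,M)}$; taking the union over all codimension-one components of $\operatorname{Sing}(M)$ gives \eqref{eqn:codim1 inclusion}.

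The main obstacle is justifying that the contribution of $T^*_W Z$ to $\chi_{z_0}(M)$ is \emph{strictly positive} and does not get cancelled: a priori, for a general holonomic module one only knows $m\ge 0$, and even with $m>0$ one must rule out a sign conspiracy in Kashiwara's formula. The clean way around this is the curve restriction in step (b): for a holonomic module on a smooth curve, the local contribution at a point to the Euler characteristic of the solution complex from a non-zero-section component of the characteristic cycle is exactly its multiplicity, with a definite sign, because on a curve the local Euler obstruction of a point is $1$ and there is no room for cancellation. The only subtlety there is to confirm that restricting to a generic transversal curve is non-characteristic with respect to $\operatorname{Ch}(M)$ away from $z_0$ and that the characteristic cycle of the restriction is computed by the expected transversal-intersection formula — this is standard (e.g. as in the non-characteristic restriction theorems in \cite{hotta2007d}), but it is where the real content sits. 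A secondary, more bookkeeping-level point is handling the case where several Lagrangian components of $\operatorname{Ch}(M)$ project onto the same $W$: since they are all conic Lagrangians over the smooth codimension-one $W$, generically along $W$ each is the conormal $T^*_W Z$, so they simply add up their multiplicities into a single $m>0$, and the argument goes through unchanged.
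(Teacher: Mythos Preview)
Your proposal is correct and follows essentially the same route as the paper: pick a smooth point $z_0$ of a codimension-one component $W\subset\operatorname{Sing}(M)$, observe that the conormal $T^*_WZ$ occurs in the characteristic cycle with positive multiplicity, and invoke Kashiwara's local index formula to conclude $\chi_{z_0}(M)<\chi^*(M)$. The paper's proof is in fact just your option~(b), stated in three lines; your additional care about why the Euler-obstruction contribution is strictly positive (and your alternative via non-characteristic restriction to a transversal curve) fills in what the paper leaves implicit.
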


\begin{proof}
Let $z$ be a smooth point of ${\rm Sing}^1(M)$.
The multiplicity of the characteristic cycle at $z$ is at least one.
By Kashiwara's index formula, it follows that $\chi_z(M)<\chi^*(M)$, which implies $z\in \nabla_{\chi}( Z,M)$.
The inclusion \eqref{thm:5.6} follows by taking the Zariski closure.
\end{proof}

\subsection{Gauss-Manin connection}
In this section, we will use the notation and concepts introduced in Section~\ref{sec:2_EulerDiscriminant}.
Let $(s,\nu)=(s_0,\dots,s_\ell,\nu_1,\dots,\nu_n)\in\C^{\ell+1+n}$ be complex parameters.
The total space of variables and coefficients $X:=\{(\var,z)\in(\C^*)^n\times Z\mid \prod_{i=0}^\ell f_i(\var;z)\neq0\}$ is fibered over $ Z$ via the natural projection $\pi:X\to Z$.
For any integer $k$, the space of relative differential $k$-forms $\Omega^k_{X/ Z}(X)$ is given by
\begin{equation*}
    \Omega^k_{X/ Z}(X)\,:=\,\sum_{1\leq i_1<\cdots<i_k\leq n}\mathcal{O}(X)d\var_{i_1}\wedge\cdots\wedge d\var_{i_k}.
\end{equation*}
We set
\begin{equation*}
    \omega\,:=\,d_\var\log\left( f(\var;z)^{-s}\var^\nu\right)=-\sum_{i=0}^\ell s_i\frac{d_\var f_i(\var;z)}{f_i(\var;z)}\,+\,\displaystyle\sum_{i=1}^n\nu_i\frac{d\var_i}{\var_i}\,\in\,\Omega^1_{X/Z}(X).
\end{equation*}
The $1$-form $\omega$ naturally defines the differential
$$\nabla_\omega\,:=\,d_\var\,+\,\omega\wedge\,:\,\Omega^{p}_{X/ Z}(X)\to\Omega^{p+1}_{X/ Z}(X).$$
The cohomology space
\begin{equation*}
    H^n( X/ Z,\omega)\,:=\,\frac{\Omega^n_{X/ Z}(X)}{{\rm Im} \left(\nabla_\omega: \Omega^{n-1}_{X/ Z}(X)\to \Omega^{n}_{X/ Z}(X)\right)}
\end{equation*}
is naturally equipped with a structure of left $D_Z$-module: for a vector field $\partial_z$ on $ Z$ we set 
\begin{equation*}
    \nabla_{\partial_z}^{\rm GM}[\xi(z)]\,:=\,\left[ \frac{\partial\xi}{\partial z}(z)-\sum_{i=0}^\ell s_i\frac{1}{f_i(\var;z)}\frac{\partial f_i}{\partial z}(\var;z)\xi(z)\right],\ \ \ [\xi(z)]\in H^n( X/ Z,\omega).
\end{equation*}
We call the $D_Z$-module $ H^n\left( X/ Z,\omega\right)$ the ($n$-th) Gauss-Manin connection.

Now we consider the case when $M$ is given by the Gauss-Manin connection.
In the language of $D$-modules, it is the so-called ($0$-th) direct image functor:
\begin{equation}\label{eq:direct image}
    H^n(X/ Z,\omega)\,\simeq\, H^0\int_\pi\mathcal{O}_Xf(\var;z)^{-s}\var^\nu\,=:\,M.
\end{equation}
Here, $\mathcal{O}_Xf(\var;z)^{-s}\var^\nu$ denotes the $D_X$-module associated to an integrable connection $\nabla:\mathcal{O}_X\to\Omega^1_X$ given by
$$
\nabla g\,:=\,dg-\sum_{i=0}^\ell s_ig\frac{df_i}{f_i}+\sum_{j=1}^n\nu_jg\frac{d\alpha_j}{\alpha_j}\ \ \ \ (g\in\mathcal{O}_X),
$$
where $d$ denotes the exterior derivative on $X$.
An integral of the form 
\begin{equation}\label{eq:Euler integral}
\int_\Gamma f(\var;z)^{-s}\var^\nu\omega(\var;z)    
\end{equation}
for some cycle $\Gamma$ and $\omega(\var;z)\in\Omega^n_{X/Z}(X)$ is a solution to the $D_{Z}$-module $H^n(X/Z,\omega)$.
The integral in \eqref{eq:Euler integral} is called {\it Euler integral}, see \cite{agostini2022vector} and \cite{matsubara2023four}.

\begin{lemma}\label{lem:vanishing}
For generic $(s,\nu)$, one has the vanishing theorem $    L^j\iota_z^*M=0$ for all $ j\neq 0$.
\end{lemma}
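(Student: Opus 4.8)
The plan is to show that for generic $(s,\nu)$ the derived restriction $L\iota_z^*M$ is concentrated in degree $0$, equivalently that $M$ is, near each $z\in Z$, a flat connection (a vector bundle with integrable connection) rather than merely a holonomic module. The key input is that $M=H^0\int_\pi \mathcal{O}_X f^{-s}\alpha^\nu$ is obtained by a $0$-th direct image under the affine morphism $\pi:X\to Z$, and that for generic exponents the relative twisted de Rham complex has cohomology concentrated in top degree. I would first recall that $\pi$ is an affine morphism, so in the Čech–de Rham (or algebraic de Rham) model the direct image is computed by the relative twisted de Rham complex $(\Omega^\bullet_{X/Z}(X),\nabla_\omega)$ placed in degrees $[-n,\dots,0]$; thus $H^j\int_\pi\mathcal{O}_X f^{-s}\alpha^\nu = H^{n+j}(X/Z,\omega)$, and I must show these vanish for $j\neq 0$, i.e. that the relative complex is a resolution of $M$ up to the top cohomology.

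**Key steps.** (1) Reduce to a fiberwise statement: by generic smoothness and the fact that $Z$ is smooth, it suffices to show that for each $z$ in a Zariski-dense open set the fiberwise twisted de Rham cohomology $H^j(X_z,\nabla_{\omega_z})$ vanishes for $j\neq n$, and that this persists after restriction. This is the classical Aomoto/Esnault–Schechtman–Viehweg vanishing: for generic $(s,\nu)$ the twisted cohomology of the very affine variety $X_z$ is concentrated in the middle degree $n$, with dimension $\chi^*=|\chi(X_z)|$; I would cite \cite{esnault1992cohomology,schechtman1994local} exactly as the paper does around equation \eqref{eq:degeneration}. (2) Use this to conclude that the relative complex $(\Omega^\bullet_{X/Z}(X),\nabla_\omega)$ has coherent cohomology only in top degree, and that top cohomology $M$ is $\mathcal{O}_Z$-coherent, hence (being also a holonomic $D_Z$-module with full support a connection on a dense open set) locally free with integrable connection for generic parameters — possibly after shrinking $Z$ to avoid a proper bad locus, which is harmless since the statement is "for generic $(s,\nu)$." (3) For an integrable connection $M$, $L^j\iota_z^*M = \mathrm{Tor}^{\mathcal{O}_Z}_{-j}(M,\mathcal{O}_z)$ in the appropriate shift, and since $M$ is $\mathcal{O}_Z$-locally free these Tor's vanish for $j\neq 0$; alternatively one invokes that the de Rham restriction of a flat connection to a point is its fiber, concentrated in degree $0$. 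Assemble (1)–(3) to get $L^j\iota_z^*M=0$ for $j\neq 0$.

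**Main obstacle.** The delicate point is step (2): passing from fiberwise generic vanishing of twisted cohomology to the statement that $M$ itself is an integrable connection (equivalently, $O_Z$-coherent) for generic $(s,\nu)$, uniformly over $Z$. One has to rule out jumps in the dimension of the fiberwise cohomology over a positive-codimension locus in $Z$ contaminating the direct image — but this is exactly controlled by the Euler discriminant $\nabla_\chi(Z)$ being a proper subvariety (which in the hyperplane case, by Theorem~\ref{thm:Euler_discr}, it is), together with constructibility/base-change for the relative de Rham complex. Concretely I would argue: away from $\nabla_\chi(Z)$ the fiber dimension of $H^n$ is constant $=\chi^*$ and all other $H^j$ vanish, so by semicontinuity and the fact that the Euler characteristic of the complex equals the constant $(-1)^n\chi^*$ on a dense open set, the lower cohomology sheaves $H^{n+j}(X/Z,\omega)$, $j<0$, are supported on $\nabla_\chi(Z)$ yet are also torsion-free $D_Z$-modules restricted to their support's complement — a contradiction unless they vanish identically. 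Making this last torsion-freeness argument precise (one typically uses that for generic $(s,\nu)$ there is no "resonance" so the whole complex computes a single connection, by an argument as in the $A$-hypergeometric literature) is where the real work lies; everything else is bookkeeping with the direct image formalism from \cite{hotta2007d}.
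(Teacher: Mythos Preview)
Your proposal has a genuine gap rooted in a confusion of quantifiers. The lemma asserts that, for generic $(s,\nu)$, the vanishing $L^j\iota_z^*M=0$ holds for \emph{every} $z\in Z$---not just for generic $z$. Your steps (2)--(3) aim to show that $M$ is an integrable connection (i.e.\ $\mathcal{O}_Z$-locally free), from which the Tor-vanishing would follow; but $M$ is \emph{not} an integrable connection on all of $Z$: by Theorems~\ref{thm:Euler discriminant and singular locus} and~\ref{thm:5.1} its singular locus is exactly the Euler discriminant $\nabla_\chi(Z)$, which is nonempty whenever $\chi^*>0$. Your remark that ``shrinking $Z$ to avoid a proper bad locus \dots\ is harmless since the statement is `for generic $(s,\nu)$'\,'' is precisely where the argument breaks: the genericity is in the exponents $(s,\nu)$, not in $z$. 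Indeed, the lemma is applied in \eqref{eq:5.6} to identify $\chi_z(M)$ with $\dim H^n(X_z,\omega)=\chi_z$ \emph{at every point}, in particular at points of $\nabla_\chi(Z)$; your argument says nothing there.

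The paper's route is different and avoids any claim about $M$ being a connection. Via base change it reduces directly to the \emph{fiberwise} statement $H^{n+j}(X_z,\omega)=0$ for all $j\neq 0$, for each fixed $z$. The crux is then to show that a \emph{single} genericity condition on $(s,\nu)$ suffices for all fibers simultaneously, even though the variety $X_z$ degenerates as $z$ moves. This is done by constructing a finite stratification $Z=Z_n\supset\cdots\supset Z_0$ with relative simple-normal-crossing compactifications $X_i$ over each stratum (as in Deligne), and observing (via an argument of Huh) that the monodromy eigenvalues around the boundary divisors are of the form $\exp(2\pi i\,\ell_{ij}(s,\nu))$ for finitely many nonzero \emph{linear} forms $\ell_{ij}$. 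The genericity condition is then simply $\ell_{ij}(s,\nu)\notin\mathbb{Z}$ for all $i,j$, after which the ESV-type vanishing applies fiber by fiber. Your step~(1), as written (``for each $z$ in a Zariski-dense open set''), does not capture this uniformity, and your steps~(2)--(3) cannot recover it.
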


\begin{proof}
By the base change formula and the isomorphism in \eqref{eq:direct image}, it is enough to prove 
\begin{equation}\label{eq:vanishing identity}
    H^{n+j}(X_z,\omega)\,=\,0\ \, \text{ for all }j\neq 0.
\end{equation}
We first construct the sequence of subvarieties $ Z= Z_n\supset  Z_{n-1}\supset\cdots \supset Z_0 $ such that ${\dim_{\C} Z_i\leq i}$, $ Z_i\setminus Z_{i-1}$ is smooth and there exists a smooth variety $X_i$ such that there exists an inclusion $i:\pi^{-1}( Z_i\setminus Z_{i-1})\to X_i$, which gives a commutative diagram
\begin{equation}
\xymatrix{
    \pi^{-1}( Z_i\setminus Z_{i-1})\ar[r]^-{\pi}\ar[d]& Z_i\setminus Z_{i-1}\\
    X_i\ar[ur]&
}
\end{equation}
Here, $X_i$ is equipped with a simple normal crossing divisor $D_i=\sum_{j}D_{ij}$ such that any finite intersection of $D_{ij}$ is transversal to a fiber of $\pi$ and $X_i\setminus D_i=\pi^{-1}( Z_i\setminus Z_{i-1})$.
Such a sequence can be constructed as in \cite[6.15]{deligne2006equations}.
Indeed, one can construct $ Z_{n-1}$ and $X_n$ with the desired property.
Then, we take the smooth locus of $ Z_{n-1}$ and proceed inductively.
Moreover, the construction of \cite[6.15]{deligne2006equations} combined with the argument of \cite[2.3]{Huh13} proves that there is a linear form $\ell_{ij}(s,\nu)\neq 0$ so that $\exp\{ 2\pi\sqrt{-1}\cdot \ell_{ij}(s,\nu)\}$ is the local monodromy around $D_{ij}$ of the local system that $f(\var;z)^{-s}\var^\nu$ defines on a fiber $X_z$.
The identity \eqref{eq:vanishing identity} follows from the same argument as \cite[Appendix]{agostini2022vector}.
The genericity assumption for the parameters is that $\ell_{ij}(s,\nu)\notin\Z$.
\end{proof}
It follows from Lemma \ref{lem:vanishing} that $\nabla_{\chi}( Z)=\nabla_{\chi}( Z,M)$.
Indeed, one has 
$$\mathbb{R}{\rm Hom}_{D_Z}(M,\mathcal{O}_z)\,=\,\mathbb{R}{\rm Hom}_{D_Z}(M,\widehat{\mathcal{O}}_z)\,=\,\mathbb{R}{\rm Hom}_{D_Z}(\mathbb{L}\iota_z^*M,\C),$$
where the first equality is a consequence of regular holonomicity of $M$ and the comparison theorem \cite[Proposition 7.3.1]{hotta2007d}.
Thus, it follows that 
\begin{equation}\label{eq:5.6}
\chi_z(M)\,=\,\sum_{i=0}^\infty (-1)^i\dim_{\C}{\rm Ext}^i_{D_Z}(M,\mathcal{O}_z)\,=\,\dim_{\C}(\iota_z^*M,\C)\,=\,\dim_{\C}H^n(X_z,\omega)\,=\,\chi_z.    
\end{equation}

\begin{theorem}\label{thm:Euler discriminant and singular locus}
    For generic $(s,\nu)$, we have $\nabla_\chi^{1}( Z)={\rm Sing}^1(M).$
\end{theorem}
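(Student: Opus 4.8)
The plan is to prove the equality by establishing two inclusions, translating freely between the $D$-module invariant $\chi_z(M)$ and the topological invariant $\chi_z$ via the identity \eqref{eq:5.6} derived above from Lemma~\ref{lem:vanishing}. In particular, for generic $(s,\nu)$ one has $\nabla_\chi(Z)=\nabla_\chi(Z,M)$, so $\nabla_\chi^1(Z)$ is equally the union of the codimension-one irreducible components of $\overline{\nabla_\chi(Z,M)}$, and it is this set that I will compute.

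For the inclusion ${\rm Sing}^1(M)\subset\nabla_\chi^1(Z)$ I would invoke Theorem~\ref{thm:5.6}, which gives ${\rm Sing}^1(M)\subset\overline{\nabla_\chi(Z,M)}$. Since ${\rm Sing}^1(M)$ is by definition pure of codimension one, each of its irreducible components is a codimension-one subvariety contained in the closed set $\overline{\nabla_\chi(Z,M)}$; once we know (see the next paragraph) that $\overline{\nabla_\chi(Z,M)}$ is a proper subvariety of $Z$, a dimension count forces each such component to coincide with a codimension-one component of $\overline{\nabla_\chi(Z,M)}$, hence to lie in $\nabla_\chi^1(Z)$.

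For the reverse inclusion $\nabla_\chi^1(Z)\subset{\rm Sing}^1(M)$ the key observation is that on the complement $Z\setminus{\rm Sing}(M)$ the holonomic module $M$ restricts to an integrable connection, so ${\rm Ext}^i_{D_Z}(M,\mathcal{O}_z)=0$ for $i\neq 0$ while $\dim_\C{\rm Hom}_{D_Z}(M,\mathcal{O}_z)$ equals the locally constant rank, namely the holonomic rank $\chi^*(M)$; hence $\chi_z(M)=\chi^*(M)$ away from ${\rm Sing}(M)$. This yields $\nabla_\chi(Z,M)\subset{\rm Sing}(M)$ and, taking Zariski closures, $\overline{\nabla_\chi(Z,M)}\subset{\rm Sing}(M)$. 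In particular $\overline{\nabla_\chi(Z,M)}$ is a proper subvariety, since ${\rm Sing}(M)$ is proper by holonomicity of $M$; this supplies the input missing in the previous paragraph. Running the same dimension count once more — a codimension-one component of $\overline{\nabla_\chi(Z,M)}$ contained in ${\rm Sing}(M)$ must be one of the codimension-one components of ${\rm Sing}(M)$ — gives $\nabla_\chi^1(Z)\subset{\rm Sing}^1(M)$, and combining the two inclusions completes the proof.

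All substantive ingredients (Theorem~\ref{thm:5.6}, the identity \eqref{eq:5.6}, and the standard fact that a holonomic $D$-module is a flat connection off its singular support) are already available, so I do not expect a serious obstacle; the only point needing a little care is the repeated upgrade of an inclusion of sets to an inclusion of their pure-codimension-one loci, which is legitimate precisely because both $\overline{\nabla_\chi(Z,M)}$ and ${\rm Sing}(M)$ are proper closed subvarieties of $Z$, so that an irreducible codimension-one subvariety contained in either of them is automatically one of its codimension-one components.
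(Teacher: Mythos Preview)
Your argument is correct and follows essentially the same route as the paper: one inclusion comes from Theorem~\ref{thm:5.6}, the other from the fact that a holonomic $D$-module is an integrable connection off its singular locus, with the identification $\nabla_\chi(Z)=\nabla_\chi(Z,M)$ supplied by Lemma~\ref{lem:vanishing} and \eqref{eq:5.6}. The paper's proof is terser and leaves the dimension-count step (upgrading inclusions of closed sets to inclusions of their codimension-one loci) implicit, whereas you spell it out carefully; this is the only real difference.
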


\begin{proof}
    From Theorem \ref{thm:5.6}, we have $\nabla_\chi^{1}( Z)\supset{\rm Sing}^1(M)$.
    The general theory of $D$-module shows that a $D$-module is an integrable connection outside its singular locus.
    This means that the Euler characteristics does not change on the nonsingular locus.
    This proves the inclusion $\nabla_\chi( Z)\subset{\rm Sing}(M)$.
\end{proof}

\subsection{Arrangement case}

Now let us assume that the homogeneous degrees of polynomials $f_i$ all equal to one.

\begin{theorem}\label{thm:5.1}
For generic $(s,\nu)$, one has an identity
\begin{equation}
    {\rm Sing}(M)=\nabla_\chi( Z).
\end{equation}
    
\end{theorem}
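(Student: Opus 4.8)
The strategy is to upgrade the codimension-one equality of Theorem~\ref{thm:Euler discriminant and singular locus} to a full equality by showing that \emph{both} $\mathrm{Sing}(M)$ and $\nabla_\chi(Z)$ are purely of codimension one in $Z$ when the $f_i$ are linear. Since Theorem~\ref{thm:Euler discriminant and singular locus} already gives $\mathrm{Sing}^1(M)=\nabla_\chi^1(Z)$, and Theorem~\ref{thm:5.6} together with the genericity argument of the previous subsection gives $\mathrm{Sing}(M)\subset\overline{\nabla_\chi(Z,M)}=\overline{\nabla_\chi(Z)}$, it suffices to prove the reverse inclusion $\nabla_\chi(Z)\subset\mathrm{Sing}(M)$ and then to argue that $\nabla_\chi(Z)$ has no components of codimension $\geq 2$. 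The first inclusion was essentially established at the end of the proof of Theorem~\ref{thm:Euler discriminant and singular locus}: off $\mathrm{Sing}(M)$ the $D_Z$-module $M$ is an integrable connection, hence $\chi_z(M)$ is locally constant there, so it equals $\chi^*(M)$; combined with $\chi_z(M)=\chi_z$ from \eqref{eq:5.6} this forces $z\notin\nabla_\chi(Z)$. So the real content of the arrangement case is the \textbf{purity statement}: $\nabla_\chi(Z)$ (equivalently $\mathrm{Sing}(M)$) has pure codimension one.

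First I would reduce to the case $Z=\CC^{(k+1)(n-k)}$, where $\nabla_\chi(Z)$ is the vanishing locus of the principal $A$-determinant $E_A$ by \eqref{eq:EA_dense}, visibly a hypersurface. For a general smooth subvariety $Z$, Theorem~\ref{thm:Euler_discr} identifies $\nabla_\chi(Z)$ (under the hypothesis $\chi_z>0$ for some $z$) with $\{z\in Z\mid E^{\mathrm{red}}_\chi(z)=0\}$, which is cut out by a single nonzero regular function on $Z$ and is therefore again a hypersurface — in particular purely of codimension one, with no embedded higher-codimension components. When $\chi_z=0$ for all $z$, both sides are handled separately: I would argue that $\mathrm{Sing}(M)$ is likewise cut out by vanishing of determinantal minors that change the matroid, matching the reduced discriminant, so the equality persists (the central-arrangement caveat after Theorem~\ref{thm:Euler_discr} shows one must be slightly careful, but the set-theoretic equality with $\mathrm{Sing}(M)$ still holds because $M$ degenerates exactly where the matroid degenerates).

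Concretely the steps are: (1) recall $\nabla_\chi(Z)=\nabla_\chi(Z,M)$ from Lemma~\ref{lem:vanishing} and \eqref{eq:5.6}; (2) invoke Theorem~\ref{thm:5.6} for the inclusion $\mathrm{Sing}^1(M)\subset\overline{\nabla_\chi(Z)}$ and the integrable-connection argument for $\overline{\nabla_\chi(Z)}\subset\mathrm{Sing}(M)$, hence $\nabla_\chi(Z)\subset\mathrm{Sing}(M)$; (3) use Theorem~\ref{thm:Euler_discr} (resp. \eqref{eq:EA_dense} when $Z$ is the full space) to conclude $\overline{\nabla_\chi(Z)}$ is a hypersurface, so $\nabla_\chi(Z)=\nabla_\chi^1(Z)$; (4) use Theorem~\ref{thm:Euler discriminant and singular locus} to get $\nabla_\chi^1(Z)=\mathrm{Sing}^1(M)$; (5) finally observe $\mathrm{Sing}(M)$ has no component of codimension $\geq 2$: any such component would, by the integrable-connection argument, have to lie in $\nabla_\chi(Z)=\nabla_\chi^1(Z)$, which has pure codimension one — a contradiction — so $\mathrm{Sing}(M)=\mathrm{Sing}^1(M)=\nabla_\chi^1(Z)=\nabla_\chi(Z)$. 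The main obstacle I anticipate is step~(5), i.e. genuinely ruling out codimension-$\geq 2$ components of $\mathrm{Sing}(M)$ without circularity: one cannot simply quote that $\nabla_\chi(Z)$ is a hypersurface and then deduce the same for $\mathrm{Sing}(M)$ unless the inclusion $\mathrm{Sing}(M)\subset\overline{\nabla_\chi(Z)}$ is known as \emph{sets}, which is exactly what Theorem~\ref{thm:5.6} plus the previous subsection provide only after checking the genericity hypotheses on $(s,\nu)$ carefully — so the delicate point is making sure the same generic $(s,\nu)$ simultaneously validates Lemma~\ref{lem:vanishing}, the index-formula computation, and the holonomicity needed for \eqref{eq:5.6}.
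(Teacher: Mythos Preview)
Your reduction is right up to step~(4): the purity of $\nabla_\chi(Z)$ from Theorem~\ref{thm:Euler_discr} together with Theorem~\ref{thm:Euler discriminant and singular locus} gives $\nabla_\chi(Z)=\nabla_\chi^1(Z)={\rm Sing}^1(M)\subset{\rm Sing}(M)$. The remaining task is precisely the inclusion ${\rm Sing}(M)\subset\nabla_\chi(Z)$, and this is where your step~(5) breaks down. You claim that a higher-codimension component of ${\rm Sing}(M)$ ``would, by the integrable-connection argument, have to lie in $\nabla_\chi(Z)$'', and later that Theorem~\ref{thm:5.6} ``plus the previous subsection'' furnish ${\rm Sing}(M)\subset\overline{\nabla_\chi(Z)}$ as sets. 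Neither is true. The integrable-connection argument says only that $M$ is an integrable connection on $Z\setminus{\rm Sing}(M)$, which yields $\nabla_\chi(Z)\subset{\rm Sing}(M)$, the direction you already have. And Theorem~\ref{thm:5.6} is stated and proved only for ${\rm Sing}^1(M)$: Kashiwara's index formula at a smooth point of a \emph{codimension-one} component forces the local Euler characteristic to drop, but in higher codimension the local Euler obstructions enter with signs and there is no reason the contributions cannot cancel. So nothing in \S\ref{sec:SingularLocus} up to that point excludes a codimension-two component of ${\rm Sing}(M)$ sitting over points where $\chi_z=\chi^*$.

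The paper closes this gap by a genuinely geometric argument that uses the linearity hypothesis and is absent from your plan. Over $U:=Z\setminus\nabla_\chi(Z)$ the intersection lattice of the arrangement is constant (this is exactly what Theorem~\ref{thm:Euler_discr} encodes), so the iterated blow-up construction of Orlik--Terao/Varchenko can be carried out \emph{in family}: one obtains a proper morphism $\widetilde{\pi^{-1}(U)}\to U$ with a relative normal crossing boundary $D$ whose strata are transversal to every fiber. A standard direct-image estimate for $\mathcal{D}$-modules (the lemma following the proof) then shows that $H^0\!\int_\pi \mathcal{O}_X f^{-s}\alpha^\nu$ restricted to $U$ has characteristic variety contained in the zero section, i.e.\ $M|_U$ is an integrable connection. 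This gives ${\rm Sing}(M)\cap U=\varnothing$, hence ${\rm Sing}(M)\subset\nabla_\chi(Z)$, and the equality follows. The existence of such a simultaneous normal-crossing compactification is exactly what linearity buys and is the substantive content of the arrangement case; without it your step~(5) cannot be completed.
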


\begin{proof}
    Since $\nabla_\chi( Z)$ is purely one codimensional, it is enough to prove the inclusion ${\rm Sing}(M)\subset\nabla_\chi( Z)$ in view of Theorem \ref{thm:Euler discriminant and singular locus}.
    Let us define an open subvariety $U\subset  Z$ by $U= Z\setminus \nabla_\chi( Z)$.
    We prove the inclusion ${\rm Ch}(M|_U)\subset T^*_UU$ where $M|_U$ is the restriction of $M$ onto $U$.
    We construct a relative compactification of $\pi^{-1}(U)$ with respect to the projection $\pi: \pi^{-1}(U)\to U$.
    We consider the process of iterated blowing-ups (\cite[Theorem 4.2.4]{orlikarrangements},\cite[\S 10.8]{varchenko1995multidimensional}).
    The result is a blowing-up $\widetilde{\pi^{-1}(U)}$ of ${\pi^{-1}(U)}$ equipped with a normal crossing divisor $D$.
    The associated morphism $\widetilde{\pi^{-1}(U)}\to U$ is transversal with respect to the stratification that $D$ defines.
    The result now follows from a standard lemma on the direct image. The following lemma follows from, e.g., \cite[Theorem 4.27]{kashiwara2003d}.
\end{proof}

\begin{lemma}
    Let $Y, Z$ be smooth varieties and $f: Y\to Z$ be a proper morphism.
    Assume that a normally crossing divisor $D=\cup_{j=1}^N D_j$ of $Y$ is given and $f$ is transversal to the stratification that $D$ defines.
    Let $N$ be a $D_Y$-module. For a subset $J$ of $\{ 1,\dots,n\}$, we set $D_J:=\cap_{j\in J}D_j$.
    If $${\rm Char}(N)\subset \cup_{J\subset\{ 1,\dots,N\}}T^*_{D_J}Y,$$
    then $\int_fN$ is an integrable connection. Here, $T^*_{D_J}Y$ denotes the conormal bundle of $D_J$.
\end{lemma}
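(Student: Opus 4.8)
The plan is to reduce the statement to a local, formal computation of the characteristic variety of a direct image. By the standard theory of direct images of $D$-modules under a proper map (as in \cite[Theorem 4.27]{kashiwara2003d}), the characteristic variety of $\int_f N$ is contained in the image under the natural correspondence $T^*Y \leftarrow Y\times_Z T^*Z \to T^*Z$ of ${\rm Char}(N)$. Thus the first step is to recall this general bound: writing $\rho: Y\times_Z T^*Z\to T^*Z$ and $\varpi: Y\times_Z T^*Z\to T^*Y$ for the two projections coming from $df^*$, one has ${\rm Char}\bigl(\int_f N\bigr)\subset \rho\bigl(\varpi^{-1}({\rm Char}(N))\bigr)$. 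It then suffices to show that this set is contained in the zero section $T^*_ZZ$, for then $\int_f N$ is a coherent $D_Z$-module with zero-section characteristic variety, i.e.\ an integrable connection.

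The second step is to analyze the geometry of the transversality hypothesis on each stratum of $D$. Stratify $Y$ by the locally closed sets $D_J^\circ := D_J \setminus \bigcup_{j\notin J} D_j$ (with $D_\varnothing^\circ = Y\setminus D$ the open stratum), so that $\bigcup_{J} T^*_{D_J}Y \supset {\rm Char}(N)$ by hypothesis. The assumption that $f$ is transversal to the stratification defined by $D$ means precisely that for each $J$ the restriction $f|_{D_J^\circ}: D_J^\circ\to Z$ is a submersion (a smooth morphism). The key point is then the following linear-algebra fact: at a point $y\in D_J^\circ$ lying over $z=f(y)$, a covector $\xi\in T^*_zZ$ lies in the image $\rho(\varpi^{-1}(T^*_{D_J,y}Y))$ only if $df_y^*(\xi)$ annihilates $T_y D_J$; but transversality gives $df_y(T_yD_J) = T_zZ$, so $df_y^*$ is injective on $T^*_zZ$ and $df_y^*(\xi)$ annihilates all of $T_yD_J$ forces $df_y^*(\xi)=0$, hence $\xi=0$. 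Therefore each stratum contributes only the zero section, and summing over $J$ gives $\rho(\varpi^{-1}({\rm Char}(N)))\subset T^*_ZZ$.

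The third step is bookkeeping: one must check that $\int_f N$ is actually holonomic (so that ``characteristic variety in the zero section'' genuinely implies ``integrable connection''), which follows since $N$ is holonomic --- its characteristic variety being a finite union of conormal bundles is Lagrangian --- and holonomicity is preserved by proper direct image \cite[Chapter 4]{kashiwara2003d}. One also needs that properness of $f$ is what licenses the use of the estimate for $\int_f$ rather than $\int_{f!}$; this is exactly the hypothesis in the statement. Assembling these three observations yields the lemma.

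I expect the main obstacle to be the precise formulation and verification of the characteristic-variety estimate for the direct image together with the transversality bookkeeping at the boundary strata: one must be careful that ``transversal to the stratification'' is used in the strong sense (submersivity on every locally closed stratum, not merely set-theoretic transversality of fibers to the divisor), and that the correspondence $T^*Y \leftarrow Y\times_Z T^*Z\to T^*Z$ is set up with the correct variance so that the conormal bundles $T^*_{D_J}Y$ really do pull back to the zero section. Once that diagram-chase is pinned down, the argument is formal.
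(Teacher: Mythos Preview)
Your proposal is correct and is precisely the argument the paper has in mind: the paper does not spell out a proof at all but simply states that the lemma ``follows from, e.g., \cite[Theorem 4.27]{kashiwara2003d}'', which is exactly the characteristic-variety estimate for proper direct images that you invoke as your first step. Your second and third steps (the transversality computation showing $\rho(\varpi^{-1}(T^*_{D_J}Y))\subset T^*_ZZ$, and the holonomicity bookkeeping) are the routine unpacking of that citation, so you have written out what the paper leaves implicit.
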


\appendix
\section{Cosmological integrals in a nutshell}\label{appendix}
Theoretical cosmology aims at understanding the origin, evolution, and large-scale structure of the universe. An essential part of cosmology is knowing how matter and energy are distributed throughout the universe at any given time. A remarkable example of this is the \textit{Cosmic Microwave Background} (CMB). \textit{Cosmological correlators} are the statistical quantities that model such distributions. There are different ways of calculating cosmological correlators; the one we are interested in is the \textit{wavefunction formalism}. It consists of two main steps: computing the wavefunction and then using it to calculate the correlators. In perturbation theory, it is sufficient to compute a finite set of \textit{wavefunction coefficients} that characterize the wavefunction completely up to the perturbative order one is interested in.

This appendix gives a self-contained introduction to the \textit{wavefunction coefficients} in flat space, aiming at explaining how they lead to physically meaningful examples of Euler integrals of products of linear forms \cite{aomoto1997twisted}.

Let $\mathcal{G}_{L,\n}$ denote the set of connected graph with $\n$ external edges and $L$ loops. Then, in perturbation theory, the $\n$-th wavefunction coefficient can be expanded in an infinite sum over all possible Feynman diagrams $G \in \mathcal{G}_{L,\n}$ given by
\begin{equation}\label{eq:waveefunction}
\Psi_\n \, = \, \sum_{\substack{L\geq 0\\G\in \mathcal{G}_{L,\n}}}\psi_G,
\end{equation}
where the $\psi_G$ are integrals that become some expression of the physical parameters after the integration.
In cosmology, when drawing Feynman diagrams, the $\n$ external edges connect the vertices in $V$ to a horizontal line representing the \textit{boundary}. 
See \cite[Section~1]{arkani2017cosmological} for some illustrations of Feynman diagrams for $\n=2,3,4,5$. 
The method to associate an integral to each diagram is given by the \textit{Feynman rules}. For an explicit definition of these rules we refer to \cite[Section~2.2]{arkani2023}. 
For physicists, it is important to note that we work with \textit{conformally coupled scalar fields}. This enables the mathematical approach to capture the essential behavior of the system while remaining computationally feasible.

Under these assumptions, it is convenient to draw a simplified version of the Feynman diagrams where we only keep internal vertices and edges and truncate the $\n$ edges connecting to the boundary. In the following, let $G=(V,E)$ be a connected undirected graph, where $V$ is the set of $n$ vertices of $G$ and $E$ is its finite collection of edges. We write $V(G)$ and $E(G)$ when we want to emphasize that $V$ and $E$ are, respectively, the vertex and edge set of $G.$ 

To the vertices and edges of $G$, we associate complex parameters representing energies, $X = (X_i)_{v_i\in V}$ and $Y = (Y_{ij})_{ij\in E}$. Figure~\ref{fig:Feynman_diag} illustrates some of the graphs we will use in our examples. In physics, the $(X,Y)$-variables are real and positive, but for practical applications it is useful to think of them in larger domains.

\begin{figure}[h]
\includegraphics[]{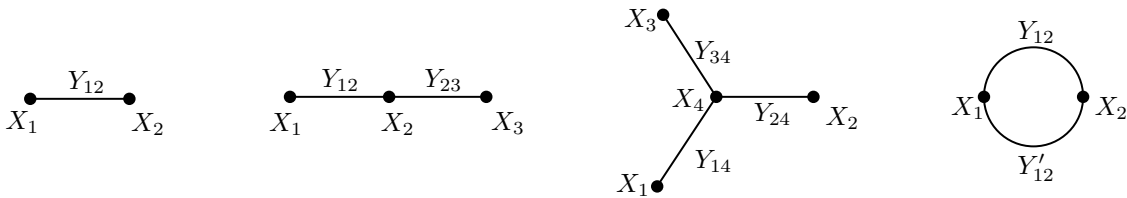}
\caption{Two-site chain, three-site chain, four-site star, and one-loop bubble.}\label{fig:Feynman_diag}
\end{figure}

The first ingredient towards introducing Euler integrals arising in cosmology are the integrands of the terms in the expansion \eqref{eq:waveefunction}. By abusing notation, we will still refer to these as wavefunction coefficients in flat space and denote them $\psi_G$. However, in what follows, the $\psi_G$ will be rational functions of the parameters $(X,Y)$.  The next subsections provide a concise review of two alternative methods to derive the flat space wavefunction coefficients: the \textit{recursion formula} and the \textit{cosmological polytope} \cite{arkani2017cosmological, juhnke2023triangulations, kuhne2022faces}. 

\subparagraph{Recursion Formula}
The recursion formula, introduced in \cite{arkani2017cosmological}, represents the wavefunction coefficient $\psi_G$
 associated with a graph $G$ as a combination of  wavefunction coefficients of subgraphs obtained by sequentially deleting individual edges from $G$. The core idea is to reduce to computing only wavefunction coefficients of graphs that consist of a single vertex and no edges. In this case, in fact, if $X$ is the vertex's energy variable, the wavefunction coefficient is $1/X$, as explained in 
 \cite[Equation~(2.12)]{arkani2017cosmological}. Then, the recursion formula is
\begin{equation}\label{eq:recursion}
\psi_G(X,Y) \, = \, \frac{1}{\sum_{k\in V(G)} X_k}\cdot \sum_{ij\in E(G)} \left(\psi_{H_i}\cdot\,\, \psi_{H_j}\right),
\end{equation}
where $H_i,H_j$ are the subgraphs containing the vertex $v_i$, (resp. $v_j$), obtained from $G$ by deleting the edge $e=ij$. Furthermore, the variable $X_{i}$ (resp. $X_{j}$) attached to $v_i$ in $H_i$  (resp. $v_{j}$ in $H_j$) becomes $X_{i}+Y_{ij}$ (resp. $X_{j}+Y_{ij}$). The derivation of this formula is explicitly given in \cite[Section 2.3]{arkani2017cosmological}.
\begin{example}\label{eq:2FSW}
    Let $G$ be the two-site chain illustrated in Figure~\ref{fig:Feynman_diag}. We have $n=2$, and the energies associated to vertices and edges are $(X_1,X_2,Y_{12})$. We denote $\psi_2$ the associated wavefunction coefficient. The recursion formula is resolved in one step:
    \[
        \psi_2 \, = \, \frac{1}{X_1+X_2} \left(\psi\Bigl(\treeOne\Bigr) \cdot \psi \Bigl(\treeBis\bigr)\right) 
        \,=\, \frac{1}{(X_1+X_2)(X_1+Y_{12})(X_2+Y_{12})} 
    \]
\end{example}

\begin{example}\label{eq:3FSW}
In this example, we show an application of the recursion formula \eqref{eq:recursion} to compute the flat space wave function $\psi_3$ associated with the three-site chain showed in Figure~\ref{fig:Feynman_diag}. Let $1/X\coloneqq\frac{1}{X_1+X_2+X_3}$, then we have
\begin{align*}
\psi_3 \, &= \, \frac{1}{X}\left( \psi\Bigl(\treeOne\Bigr) \cdot \psi \Bigl(\treeTwo\Bigr) + \psi\Bigl(\treeThree\Bigr) \cdot \psi\Bigl(\treeFour\Bigr)\right) \\
&=\,\frac{1}{X}\biggl( \frac{1}{X_1+Y_{12}} \cdot \psi\Bigl(\treeTwo\Bigr) +\psi\Bigl(\treeThree\Bigr) \cdot\frac{1}{X_3+Y_{23}}\biggr)\\
&= \, \frac{1}{X}\biggl(\frac{1}{(X_1+Y_{12})(X_2+X_3+Y_{12})}\cdot \psi\Bigl(\treeFive\Bigr)\psi\Bigl(\treeSix\Bigr) \\&\,\quad+
\frac{1}{X_1+X_2+Y_{23}}\cdot \psi\Bigl(\treeSeven\Bigr)\psi\Bigl(\treeEight\Bigr) \cdot \frac{1}{X_3+Y_{23}}
\biggr)\\
&=\, \, \frac{1}{X}\biggl(\frac{1}{(X_1+Y_{12})(X_2+X_3+Y_{12})(X_2+Y_{12}+Y_{23})(X_3+Y_{23})} \\&\,\quad+
\frac{1}{(X_1+X_2+Y_{23})(X_1+Y_{12})(X_2+Y_{12}+Y_{23})(X_3+Y_{23})}\biggr).\\
\end{align*}
By finding a common denominator, we obtain the following rational function:
\begin{equation*}
\frac{X_1+X_3+2X_2+Y_{12}+Y_{23}}{\textcolor{mycolor1}{(X_1+X_2+X_3)}\textcolor{mycolor2}{(X_1+Y_{12})}\textcolor{mycolor3}{(X_2+Y_{12}+Y_{23})}\textcolor{mycolor4}{(X_3+Y_{23})}\textcolor{mycolor5}{(X_1+X_2+Y_{23})}\textcolor{mycolor6}{(X_2+X_3+Y_{12})}}
\end{equation*}
The use of different colors for the linear forms in the denominator will enhance the clarity of Example~\ref{ex:Euler_3site} in the next section.
\end{example}

\subparagraph{Cosmological Polytope}
An alternative way to introduce the wavefunction coefficients is via the canonical form of a \textit{positive geometry} \cite{arkani2017positive}. For the definition of positive geometry and its canonical form we refer to \cite[Definition~1]{lam2022invitation}.
The cosmological polytope $\mathcal{P}_G$ of a graph $G$ as above plays the role of a positive geometry which is built ad hoc to have as canonical form the differential form 
\[\Omega_{\mathcal{P}_G}\, = \, \psi_G(X,Y) \d X \wedge\d Y,\]
where $\d X = \d X_1 \wedge \dots \d
X_n$ and $\d Y = \wedge_{ij\in E} \d Y_{ij}$. 
For the general definition of this polytope as the convex hull of its vertices we refer to \cite{kuhne2022faces}. The facets of the cosmological polytope were characterized by Arkani-Hamed, Benincasa, and Postnikov in \cite{arkani2017cosmological}. Such a characterization is resumed in \cite[Theorem 2.1]{kuhne2022faces}, where a complete description of the higher codimensional faces is also given.

The cosmological polytope $\mathcal{P}_G$ associated with the graph $G=(V,E)$ lives in the space $\RR^{|E|+|V|-1}$, with standard basis vectors $X=(X_1,\dots,X_n)$ and $Y=(Y_{ij})_{ij\in E}$. The facets of $\mathcal{P}_G$ are in bijection with the connected subgraphs $H = (V(H),E(H))$, where a subgraph is another graph formed from a subset of the vertices and edges of $G$ where all endpoints of the edges of $H$ are in the vertex set of $H$. In particular, the facet $F_H$ is the intersection of $\mathcal{P}_G$ with the hyperplane
\begin{equation}\label{eq:facets_equations}
    \sum_{v\in V(H)} X_v + \sum_{\substack{e\,=\,ij,\\ i\in V(H),\, 
    j\not\in V(H)}} Y_{ij} + \sum_{\substack{e\,=\,ij\not\in E(H),\\ i\in V_H,\, 
    j\in V(H)}} 2Y_{ij} \,=\, 0.
\end{equation}
The facet $F_G$ associated with the full graph $G$ and determined by the hyperplane defined by $\sum_{i=1}^n X_i=0$ is called the \textit{scattering facet.} One can immediately check that the hyperplanes appearing in the denominator of the rational function from Example~\ref{eq:3FSW} are precisely the ones obtained by computing~\eqref{eq:facets_equations} for each subgraph of the three-site chain. It is an open problem to prove that the flat space wavefunction coefficient of a cosmological graph $G$, or more in general a graph, is a rational function with poles on the facet hyperplanes of the cosmological polytope. However, a challenge that arises when computing the canonical form of the cosmological polytope is the computation of the adjoint hypersurface. This is the polynomial that encodes the zeros of the flat space wave function. For more details about the adjoint of a polytope see \cite{kohn2020projective}. This challenge motivates the study of triangulations of the cosmological polytope \cite{juhnke2023triangulations}.

\begin{example}
    The cosmological polytope of the two-site chain is a triangle in $\RR^3$ with coordinates $[X_1,X_2,Y]$ cut out by the hyperplanes $\{X_1+X_2,X_1+Y_{12},X_2+Y_{12}\}$. The adjoint hypersurface in this case is $1$.
\end{example}

The $f$-vector of the cosmological polytope can be computed via the recursive formulas in Theorem 4.5 and Corollary 4.6 in \cite{kuhne2022faces}. In this way, one can compute the number of hyperplanes contributing in the arrangement that we will discuss in the next section.

\subsubsection*{Cosmological Integrals}
Euler integrals of linear arise in the study of cosmological integrals. Closely analogous to Feynman integrals in particle physics, these objects were introduced in \cite{arkani2023} for trees, and later their definition was also extended to graphs with cycles in \cite{hang2024note, baumann2024kinematic}. Starting from the flat space wavefunction coefficients $\psi_G$, introduced in the previous section, we shift the variables $X_i$ associated to the vertices by new complex variables $\alpha_i$ for $i=1,\dots,n$ which will be the integration variables. Furthermore, an additional factor  $\alpha^\varepsilon=\alpha_1^{\eps}\cdots \alpha_n^{\eps}$, where $\eps\in \CC$, together with the product of the edge variables are added to the integrand. The choice of $\varepsilon$ parametrizes the cosmology in which the process takes place. 
This procedure gives the following integral 
\begin{equation}\label{eq:FRW}
I_G(X,Y,\eps) \, = \,	\int_{\Gamma}  2^{n-1}\cdot \prod_{ij\in E} Y_{ij} \cdot \alpha^\varepsilon\cdot \psi_G(X_1+\alpha_1,\dots,X_n+\alpha_n,Y)\, \d \alpha,
\end{equation}
where $\d \alpha = \d \alpha_1\wedge \dots \wedge \d \alpha_n$, and $\Gamma$ is a twisted cycle on the complement of hyperplanes arrangement in $(\CC^*)^n$ defined by the linear forms appearing in the denominators of the function $\psi_G(X+\alpha,Y)$, with $X+\alpha=(X_1+\alpha_1,\dots,X_n+\alpha_n)$.
In the context of cosmology, $\Gamma$ is the positive orthant.
More explicitly, given 
\[\psi_G(X+\alpha,Y) \,\, = \,\, \frac{P(X+\alpha,Y)}{L_1(X+\alpha,Y)\cdots L_r(X+\alpha,Y)},\]
where $P,L_i\in \CC[X,Y]$ are respectively the adjoint hypersurface and the facets of the cosmological polytope, as defined in the previous section, the twisted cycle $\Gamma$ lives in the top-dimensional twisted de Rham homology group defined on the complement of hyperplanes 
\begin{align}\label{eq:hyperplane_arrangement}
\mathcal{H}_G^{(X,Y)}\,&=\,(\CC^*)^{n}\setminus V_{(\CC^*)^{n}}(L_1(X+\alpha,Y)\cdots L_r(X+\alpha,Y))\nonumber\\
&= \, \{\alpha\in (\CC^*)^n \,\, |  \,\, L_1(X+\alpha,Y)\neq0, \,\, i=1,\dots,r\},
\end{align}
for a generic choice of parameters $(X,Y)\in \CC^{|V|+|E|}$. For the more deteailed physics motivation behind the definition of \eqref{eq:FRW} we refer to \cite[Section 2.4]{arkani2023}. 

\begin{example}\label{ex:two-site_integral}
      	For $n=2$, the function $\psi_2(X_1,X_2,Y_{12})$ gives the integral
      	$$I_2(X_1,X_2,Y_{12},\eps) \, = \, \int_{\Gamma} \frac{2 Y_{12} \cdot \alpha_1^\eps \alpha_2^\eps}{\textcolor{mycolor1}{(X_1+X_2+\alpha_1+\alpha_2)}\textcolor{mycolor2}{(X_1+Y_{12}+\alpha_1)}\textcolor{mycolor3}{(X_2+Y_{12}+\alpha_2)}}\d \alpha_1 \wedge \d \alpha_2.$$
The associated hyperplane arrangement is represented in \cite[Figure~1]{arkani2023}.       
\end{example}

By construction, the integrals described in \eqref{eq:FRW} are Euler integrals of product of linear forms. Therefore, we can use the machinery of Sections \ref{sec:PAD_sparse} and \ref{sec:Euler_discriminant} to investigate their singular locus. Given a graph $G$, we denote $z_G$, or $z_n$ if the number of vertices determines $G$, the matrix in \eqref{eq:C(z)} associated to the hyperplanes in \eqref{eq:hyperplane_arrangement}. Then, $z_G$ is a matrix of size $(n+1)\times r$, and we denote $B_G$ the associated bipartite graph introduced in Section \ref{sec:PAD_sparse}. 
\begin{lemma}
    Given a connected undirected graph $G$, then the bipartite graph $B_G$ is also connected. 
\end{lemma}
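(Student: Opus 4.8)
The plan is to read off the bipartite graph $B_G$ directly from the facet description \eqref{eq:facets_equations} of the cosmological polytope $\mathcal{P}_G$, and then to check that every vertex of $B_G$ is joined by a path to the single vertex $0$ coming from the homogenising coordinate $\alpha_0$; connectedness follows at once.

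First I would make the dictionary explicit. The arrangement \eqref{eq:hyperplane_arrangement} is cut out by the linear forms $L_H$, one for each connected subgraph $H$ of $G$ (these index the facets $F_H$ of $\mathcal{P}_G$), so the columns of $z_G$ are indexed by these subgraphs, while its rows are indexed by $\{0,1,\dots,n\}$: row $0$ records the constant term of $L_H(X+\alpha,Y)$ and row $v$ (for $1\le v\le n$) records the coefficient of $\alpha_v$. By \eqref{eq:facets_equations}, after the shift $X_v\mapsto X_v+\alpha_v$ one has
\[
L_H(X+\alpha,Y)\;=\;\sum_{v\in V(H)}\alpha_v\;+\;c_H,
\]
where $c_H=c_H(X,Y)$ does not involve $\alpha$ and contains each monomial $X_v$ with $v\in V(H)$ with coefficient $1$. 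Hence the $(v,H)$-entry of $z_G$ is $1$ if $v\in V(H)$ and $0$ otherwise, while the $(0,H)$-entry equals $c_H$. I would then record two facts. First, since every subgraph has $V(H)\ne\varnothing$, the polynomial $c_H$ is not identically zero, so the vertex $0$ of $B_G$ is adjacent to every subgraph-vertex. Second, for each $v\in\{1,\dots,n\}$ the single-vertex subgraph $\{v\}$ is a connected subgraph of $G$ (it labels a facet; compare the terms $X_i+Y_{ij}$ in the denominators of Examples~\ref{eq:2FSW} and~\ref{eq:3FSW}), and the $(v,\{v\})$-entry of $z_G$ is $1$, so the vertex $v$ is adjacent to the subgraph-vertex $\{v\}$. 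Combining the two: every subgraph-vertex is adjacent to $0$, and every $v\in\{1,\dots,n\}$ is adjacent to $\{v\}$, which is adjacent to $0$; hence all vertices of $B_G$ lie in the connected component of $0$, and $B_G$ is connected. (Equivalently, the subgraph-vertex attached to the full scattering facet $\sum_v X_v=0$ is already adjacent to every row $0,1,\dots,n$.)

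I expect the only delicate point to be the dictionary in the middle paragraph: one must match the matrix $z_G$ attached by \eqref{eq:C(z)} to the arrangement \eqref{eq:hyperplane_arrangement} with the combinatorics of \eqref{eq:facets_equations}, and in particular confirm that the single-vertex subgraphs (and the full graph) genuinely occur among the connected subgraphs indexing facets of $\mathcal{P}_G$. Note that connectedness of $G$ itself plays no role in the argument beyond being the standing hypothesis under which $\mathcal{P}_G$ and $\psi_G$ are defined.
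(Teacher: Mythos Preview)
Your proof is correct and follows essentially the same approach as the paper's: both arguments hinge on the observation that the constant term of every facet linear form $L_H$ is a nonzero polynomial in $(X,Y)$, so vertex $0$ is adjacent to every vertex in $V_2$. The only cosmetic difference is that to connect the remaining vertices $1,\dots,n$ of $V_1$ to $0$, the paper routes through the scattering-facet vertex (the hyperplane $L_G=\sum_v X_v+\sum_v\alpha_v$, which is adjacent to all of $V_1$), whereas your primary argument routes each $v$ through the single-vertex subgraph $\{v\}$; you note the scattering-facet alternative yourself in the parenthetical remark, so the two proofs are interchangeable.
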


\begin{proof}
    Consistently with Section \ref{sec:PAD_sparse}, we denote $V_1$ and $V_2$, respectively the left and right vertex set of the bipartite graph $B_G$. The set $V_1$ contains $n+1$ vertices representing the homogeneous coordinates $\alpha_0,\dots,\alpha_n$, while the set $V_2$ has size $r$ with each vertex representing an hyperplane $L_i$. We distinguish the following two cases:
    \begin{itemize}
   \item For any pair of vertices $v_i,v_j\in V_1$, a path between them is given by two edges connecting each $v_i$ to the vertex representing the hyperplane $L_1=X_1+\dots+X_n+\alpha_1+\dots+\alpha_n$ given by shifting the scattering facet;
   \item For any pair of vertices $v_i,v_j\in V_2$, a path between them is given by the edges corresponding to the non-zero coefficient of the variable $\alpha_0$ of the hyperplanes $L_i,L_j$.
   \end{itemize} 
Therefore, given any pair of vertices $v_i,v_j\in V_1\cup V_2$ there exists a path among them.   
\end{proof}

\begin{example}
This example illustrates in details the computation of the principal $A$-determinant of the sparse arrangement induced by the integral of the two-site chain (Figure \ref{fig:Feynman_diag}, left) from Example~\ref{ex:two-site_integral} and its Euler discriminant. In the notation of Section~\ref{sec:PAD_sparse}, we have $k=2$ and $n=5$. The matrix $z_2$, the bipartite graph $B_G$, and the matrix $A$, whose columns give the vertices of the edge polytope $P_G$ are illustrated in Figure~\ref{fig:data_2site}.
\begin{figure}[H]
\begin{subfigure}[T]{0.33\linewidth}
\centering
\vspace{0.7cm}
$$z_2 \,\, = \,\,
    \begin{bmatrix}
        z_{03} & z_{04} & z_{05}\\
        z_{13} & z_{14} & 0\\
        z_{23} & 0 & z_{25}\\
    \end{bmatrix}$$
\end{subfigure}\hfill
\begin{subfigure}[T]{0.33\linewidth}
\centering
\vspace{0.7cm}
\includegraphics[]{figures/bipartite_graph_2.tikz}
\end{subfigure}
\begin{subfigure}[T]{0.33\linewidth}
    $$A \,=\, \begin{bmatrix}
    1 & 1 & 1 & 0 & 0 & 0 & 0\\
    0 & 0 & 0 & 1 & 1 & 0 & 0 \\
    0 & 0 & 0 & 0 & 0 & 1 & 1 \\
    1 & 0 & 0 & 1 & 0 & 1 & 0 \\
    0 & 1 & 0 & 0 & 1 & 0 & 0 \\
    0 & 0 & 1 & 0 & 0 & 0 & 1
\end{bmatrix}$$
\end{subfigure}
\caption{The matrix $z_2$, the bipartite graph $B_G$, and the matrix $A$ for the two-site chain. The colors for the edges of $B_G$ matches the non-zero coefficients in the corresponding hyperplanes $L_1,L_2,L_3$ in the integral from Example~\ref{ex:two-site_integral}.}\label{fig:data_2site}
\end{figure} 
\noindent
The polytope $P_G\subset \RR^6$ is $4$-dimensional, it has f-vector $(7,17,18,8,1)$, and normalized volume $4$. 
Theorem \ref{thm:PAD} provides a closed formula to compute the principal $A$-determinant of the sparse arrangement:
\[E_A(z_2)\,=\,z_{03}z_{04}^2 z_{05}^2 z_{13}^2 z_{14}^2 z_{23}^2 z_{25}^2 (z_{03}z_{14}-z_{04}z_{13})(z_{03}z_{25}-z_{05}z_{23})(z_{03}z_{14}z_{25}-z_{05}z_{14}z_{23}-z_{04}z_{13}z_{25}).\]
The expected degree (see Proposition~\eqref{prop:degree}) is $\deg(E_A(z_2)) = (\dim(P_G)+1)\cdot\text{vol}(P_G) = (4+1) \cdot 4 = 20$. The multiplicities of each factor are computed using the methods from Section~\ref{sec:subdiagram_volume}.
Let us point out that each factor of $E_A(z_2)$ indeed corresponds to a singularity of the Euler integral
\[I_{\Gamma}(z_2,s,\nu)\, = \,\int_{\Gamma} (z_{03}+z_{13}\alpha_1+z_{23}\alpha_2)^{-s_1}(z_{04}+z_{14}\alpha_1)^{-s_2}(z_{05}+z_{25}\alpha_2)^{-s_3} \alpha_1^{\nu_1}\alpha_2^{\nu_2} \frac{\d \alpha_1}{\alpha_1}\wedge\frac{\d \alpha_2}{\alpha_2}.\]
For instance, let us pick the factor $z_{03}$ of $E_A(z_2)$. For simplicity, we assume that the $z_{ij}$ are all real and generic and ${\rm Re}(s_1)<1, {\rm Re}(\nu_1)>-1, {\rm Re}(\nu_2)>-1$.
Let $\Delta(z_2)$ be the bounded chamber specified by three lines $\{\var_1=0\},\{\var_2=0\}$ and $\{ L_1=0\}$.
By a change of coordinates given by $\beta_i=-z_{i3}/z_{03}\alpha_i$ $(i=1,2)$, we obtain that
\begin{align*}
\int_{\Delta(z_2)}L_1^{-s_1}L_2^{-s_2}&L_3^{-s_3}\var_1^{\nu_1}\var_2^{\nu_2}\frac{\d \alpha_1}{\alpha_1}\wedge\frac{\d \alpha_2}{\alpha_2}
=\\
&z_{03}^{\nu_1+\nu_2-s_1}(-z_{13})^{-\nu_1}(-z_{23})^{-\nu_2}\int_{{\Delta}}(1-\beta_1-\beta_2)^{-s_1}f(\beta;z_2)\beta_1^{\nu_1}\beta_2^{\nu_2}\frac{\d \beta_1}{\beta_1}\wedge\frac{\d \beta_2}{\beta_2},
\end{align*}
where ${\Delta}=\{ (\beta_1,\beta_2)\in\mathbb{R}^2\mid \beta_1,\beta_2,1-\beta_1-\beta_2>0\}$ and   $f(\beta;z_2)$ is a holomorphic function in $\beta_1,\beta_2$ with holomorphic parameters $z_2$ coming from a power series expansion of $L_2^{-s_2}L_3^{-s_3}$.
By expanding $f(\beta;z_2)$ as $\sum_{m_1,m_2\geq 0}c_{m_1,m_2}(z_2)\beta_1^{m_1}\beta_2^{m_2}$, the integral is expanded into a function
$$
z_{03}^{\nu_1+\nu_2-s_1}(-z_{13})^{-\nu_1}(-z_{23})^{-\nu_2}
\sum_{m_1,m_2\geq 0}c_{m_1,m_2}(z_2)\frac{\Gamma(1-s_1)\Gamma(1+\nu_1+m_1)\Gamma(1+\nu_2+m_2)}{\Gamma(3-s_1+\nu_1+m_1+\nu_2+m_2)}.
$$
This function is clearly singular for generic values of $s_1,s_2,s_3,\nu_1,\nu_2$ when $z_{03}\to 0$.

To compute the singular locus of the physical integral we need to further restrict to the subspace where the coefficients in $z_2$ are parametrized by the variables $X,Y$ which represent energies. Therefore, the matrix $z_2$ becomes 
$$z_2(X,Y) \, = \,  \begin{bmatrix}
        X_1+X_2 & X_1+Y_{12} & X_2+Y_{12}\\
        1 & 1 & 0\\
        1 & 0 & 1\\
    \end{bmatrix}.$$
Using Theorem~\ref{thm:Euler_discr} we can determine the defining equation of the Euler discriminant:
\[E_\chi(X,Y)\,=\,(X_1+X_2)(X_1+Y_{12})^2(X_2+Y_{12})^2(X_2-Y_{12})(X_1-Y_{12})Y_{12}\]
When describing the singularities of the differential equations annihilating the integral from Example~\ref{ex:two-site_integral}, the singularity $Y_{12}=0$ is discarded, see \cite[Section~3]{arkani2023}. This is due to the normalization factor $Y_{12}$ appearing in the numerator of the integral. 
\end{example}
\begin{example}\label{ex:Euler_3site}
This example illustrates the principal $A$-determinant and the Euler discriminant for the three-site chain. The matrix $z_3$ and the bipartite graph are shown in Figure~\ref{fig3}. 
\begin{figure}[H]
\begin{subfigure}[T]{0.45\linewidth}
\vspace{0.7cm}
\centering
$$z_3 \,\, = \,\,
    \begin{bmatrix}
        z_{04} & z_{05} & z_{06} & z_{07} & z_{08} & z_{09}\\
        z_{14} & z_{15} & 0 & 0 & z_{18} & 0\\
        z_{24} & 0 & z_{26} & 0 & z_{28} & z_{29}\\
        z_{34} & 0 & 0 & z_{37} & 0 & z_{39}\\
    \end{bmatrix}$$
\end{subfigure}
\begin{subfigure}[T]{0.45\linewidth}
\centering
\includegraphics[]{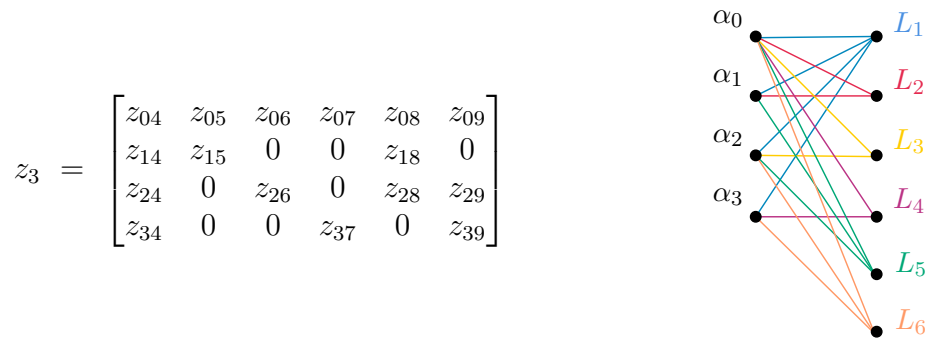}
\end{subfigure}\caption{The coefficients matrix and the bipartite graph for the three-site chain. The colors refer those of the hyperplanes in Example~\ref{eq:3FSW} after introducing the $\alpha$-variables.}\label{fig3}
\end{figure}    
\noindent
Using Theorem \ref{thm:PAD} and the method presented in Section~\ref{sec:subdiagram_volume}, we have that $E_A(z_3)$ equals 
\begin{equation*}
\begin{split}
  &\textcolor{mycolor1}{z_{04}} \textcolor{mycolor1}{z_{05}^{\textcolor{black}{11}}} \textcolor{mycolor1}{z_{06}^{\textcolor{black}{9}}} \textcolor{mycolor1}{z_{07}^{\textcolor{black}{11}}} \textcolor{mycolor1}{z_{08}^{\textcolor{black}{3}}} \textcolor{mycolor1}{z_{09}^{\textcolor{black}{3}}} 
  z_{14}^8 z_{15}^{11} z_{18}^8 z_{24}^4 z_{26}^9 z_{28}^6 z_{29}^6 z_{34}^8 z_{37}^{11}z_{39}^8\textcolor{mycolor1}{(z_{04}z_{15} - z_{05}z_{14})(z_{04} z_{18} - z_{08} z_{14})}\\
  &\textcolor{mycolor1}{(z_{04} z_{26} - z_{06} z_{24})}
  (z_{04} z_{28} -z_{08} z_{24})\textcolor{mycolor1}{(z_{04} z_{29} - z_{09} z_{24})(z_{04} z_{37} - z_{07} z_{34})}(z_{04} z_{39} - z_{09} z_{34}
)\\
&\textcolor{mycolor1}{(z_{08}z_{29} - z_{09}z_{28})}(z_{14} z_{28} - z_{18} z_{24}
)^2(z_{24} z_{39} - z_{29} z_{34}
)^2\textcolor{mycolor1}{(z_{05} z_{18} - z_{08} z_{15})}^3(z_{06} z_{28} - z_{08} z_{26}
)^3\\
&(z_{06} z_{29} - z_{09} z_{26}
)^3\textcolor{mycolor1}{(z_{07} z_{39} - z_{09} z_{37}
)}^3\textcolor{mycolor1}{(z_{05} z_{18} z_{26} + z_{06} z_{15} z_{28} - z_{08} z_{15} z_{26}
)}^3\\
&\textcolor{mycolor1}{(z_{06} z_{29} z_{37} + z_{07} z_{26} z_{39} - z_{09} z_{26} z_{37})}^3\textcolor{mycolor1}{(z_{04}z_{15}z_{26} - z_{05}z_{14}z_{26} - z_{06}z_{15}z_{24})}\\
  &\textcolor{mycolor1}{(z_{04} z_{26} z_{37} - z_{06} z_{24} z_{37} - z_{07} z_{26} z_{34})}(z_{04} z_{15} z_{29} - z_{05} z_{14} z_{29} - z_{09} z_{15} z_{24})\\
  &\textcolor{mycolor1}{(z_{04} z_{15} z_{37} - z_{05} z_{14} z_{37} - z_{07} z_{15} z_{34})}(z_{04} z_{15} z_{39} - z_{05} z_{14} z_{39} - z_{09} z_{15} z_{34})\\
  &(z_{04} z_{18} z_{37} - z_{07} z_{18} z_{34} - z_{08} z_{14} z_{37})(z_{04} z_{18} z_{39} - z_{08} z_{14} z_{39} - z_{09} z_{18} z_{34})\\
  &(z_{04} z_{28} z_{37} - z_{07} z_{28} z_{34} - z_{08} z_{24} z_{37})\textcolor{mycolor1}{(z_{07} z_{28} z_{39} + z_{08} z_{29} z_{37} - z_{09} z_{28} z_{37})}\\
  &\textcolor{mycolor1}{(z_{05} z_{18} z_{29} - z_{08} z_{15} z_{29} + z_{09} z_{15} z_{28})}(z_{14} z_{28} z_{39} - z_{18} z_{24} z_{39} + z_{18} z_{29} z_{34})\\
  &(z_{04} z_{26} z_{39} - z_{06} z_{24} z_{39} + z_{06} z_{29} z_{34} - z_{09} z_{26} z_{34})(z_{04} z_{18} z_{29} - z_{08} z_{14} z_{29} + z_{09} z_{14} z_{28} - z_{09} z_{18} z_{24})\\
  &(z_{04} z_{18} z_{26} + z_{06} z_{14} z_{28} - z_{06} z_{18} z_{24} - z_{08} z_{14} z_{26})(z_{04} z_{15} z_{28} - z_{05} z_{14} z_{28} + z_{05} z_{18} z_{24} - z_{08} z_{15} z_{24})\\
  &(z_{04} z_{28} z_{39} - z_{08} z_{24} z_{39} + z_{08} z_{29} z_{34} - z_{09} z_{28} z_{34})(z_{04} z_{29} z_{37} + z_{07} z_{24} z_{39} - z_{07} z_{29} z_{34} - z_{09} z_{24} z_{37})\\
    &\textcolor{mycolor1}{(z_{05} z_{18} z_{29} z_{37} - z_{07} z_{15} z_{28} z_{39} - z_{08} z_{15} z_{29} z_{37} + z_{09} z_{15} z_{28} z_{37})}\\
        &\textcolor{mycolor1}{(z_{04} z_{15} z_{26} z_{37} - z_{05} z_{14} z_{26} z_{37} - z_{06} z_{15} z_{24} z_{37} - z_{07} z_{15} z_{26} z_{34}
)}\\
  &(z_{04} z_{15} z_{26} z_{39} - z_{05} z_{14} z_{26} z_{39} - z_{06} z_{15} z_{24} z_{39} + z_{06} z_{15} z_{29} z_{34} - z_{09} z_{15} z_{26} z_{34})\\
  &(z_{04} z_{15} z_{28} z_{37} - z_{05} z_{14} z_{28} z_{37} + z_{05} z_{18} z_{24} z_{37} - z_{07} z_{15} z_{28} z_{34} - z_{08} z_{15} z_{24} z_{37})\\
  &(z_{04} z_{15} z_{29} z_{37} - z_{05} z_{14} z_{29} z_{37} + z_{07} z_{15} z_{24} z_{39} - z_{07} z_{15} z_{29} z_{34} - z_{09} z_{15} z_{24} z_{37})\\
  &(z_{04} z_{18} z_{26} z_{37} + z_{06} z_{14} z_{28} z_{37} - z_{06} z_{18} z_{24} z_{37} - z_{07} z_{18} z_{26} z_{34} - z_{08} z_{14} z_{26} z_{37})\\
    &(z_{04} z_{18} z_{26} z_{39} + z_{06} z_{14} z_{28} z_{39} - z_{06} z_{18} z_{24} z_{39} + z_{06} z_{18} z_{29} z_{34} - z_{08} z_{14} z_{26} z_{39} - z_{09} z_{18} z_{26} z_{34})\\
   &(z_{04} z_{15} z_{28} z_{39} - z_{05} z_{14} z_{28} z_{39} + z_{05} z_{18} z_{24} z_{39} - z_{05} z_{18} z_{29} z_{34} - z_{08} z_{15} z_{24} z_{39} + z_{08} z_{15} z_{29} z_{34} -\\ &\,\,z_{09} z_{15} z_{28} z_{34}
)
(z_{04} z_{18} z_{29} z_{37} - z_{07} z_{14} z_{28} z_{39} + z_{07} z_{18} z_{24} z_{39} - z_{07} z_{18} z_{29} z_{34} - z_{08} z_{14} z_{29} z_{37} +\\ &\,\,z_{09} z_{14} z_{28} z_{37} - z_{09} z_{18} z_{24} z_{37}).
\end{split}
\end{equation*}
The degree is $\deg(E_A(z_3)) = (\dim(P_G)+1)\cdot\text{vol}(P_G) = (8+1) \cdot 30 = 270$. The factors in cerulean are the ones that do not identically vanish when restricting to the subspace arising in physics.  
The restriction of the coefficients to the physical parameter produces the matrix 
$$  \begin{bmatrix}
        X_1+X_2+X_3 & X_1+Y_{12} & X_2+Y_{12}+Y_{23} & X_3+Y_{23} & X_1+X_2+Y_{23} & X_2+X_3+Y_{12}\\
        1 & 1 & 0 & 0 & 1 & 0\\
        1 & 0 & 1 & 0 &1 &1 \\
        1 & 0 & 0 & 1 & 0 &1
    \end{bmatrix},$$
whose non-vanishing maximal minors give the factors of the Euler discriminant:
\begin{align*}
E_\chi(X,Y)\,=\, &(X_1+X_2+X_3)(X_1+Y_{12})^{10}(X_2+Y_{12}+Y_{23})^9
(X_3+Y_{23})^{10}
(X_1+X_2+Y_{23})^{3}\\
&(X_2+X_3+Y_{12})^3
(X_2+X_3-Y_{12})
(X_3-Y_{23})^6
(X_1+X_3-Y_{12}-Y_{23})
(X_1-Y_{12})^6\\
&(X_1+X_2-Y_{23})
(X_1 - X_3 - Y_{12} + Y_{23})
(X_2-Y_{12}+Y_{23})^3
(X_2+Y_{12}-Y_{23})^3
Y_{12}^6
Y_{23}^6\\
&(X_3-2Y_{12}-Y_{23})
(X_1 - Y_{12} - 2Y_{23})
(X_2-Y_{12}-Y_{23})(X_1 - Y_{12} + 2Y_{23})\\
&(X_3 + 2Y_{12} - Y_{23}) 
(Y_{12}-Y_{23})(Y_{12}+Y_{23}),
\end{align*}
which has degree $77$. The exponents here are computed using the built-in function \texttt{EulerDiscriminantQ} from the \texttt{julia} package \texttt{PLD.jl} as explained in Example~\ref{ex: artificial_specialized}.

\end{example}

\vspace{1cm}
\noindent {\bf Acknowledgements.} 
We are grateful to Simon Telen for insightful discussions and to Henrik J.~Münch for his contribution and computations in the starting stages of the project. We also thank Guilherme L. Pimentel and Tom Westerdijk for their valuable comments on the appendix, Bernd Sturmfels for drawing our attention to  \cite{clarke2023matroid}, and Alexander Postnikov for noting the connection to \cite[Proposition 4.4]{galashin2024trianguloids}.
Lastly, we thank Masahiko Yoshinaga for pointing out the need of the positivity  hypothesis of the signed Euler characteristic in Theorems~\ref{thm:PAD} and \ref{thm:Euler_discr}.

CF has received funding from the
European Union’s Horizon 2020 research and innovation programme under the Marie Sk\l odowska-Curie grant agreement No 101034255); and by the European Research Council (ERC) under the European Union’s Horizon Europe research and innovation programme, grant agreement 101040794 (10000DIGITS).
SJMH is supported by JSPS KAKENHI Grant Number 22K13930, and partially supported by JST CREST Grant Number JP19209317.

{\small
\bibliography{references}}
\bibliographystyle{abbrv}

\vfill 

\noindent{\bf Authors' addresses:}
\smallskip

\noindent Claudia Fevola, Université Paris-Saclay, Inria 
\hfill {\tt claudia.fevola@inria.fr}

\smallskip

\noindent Saiei-Jaeyeong Matsubara-Heo, Kumamoto University\hfill {\tt saiei@educ.kumamoto-u.ac.jp}

\end{document}